\newcounter{itemcounter}
\numberwithin{itemcounter}{section}
\newtheorem{thm}[itemcounter]{Theorem}
\newtheorem{lem}[itemcounter]{Lemma}
\newtheorem{defi}[itemcounter]{Definition}
\newtheorem{prop}[itemcounter]{Proposition}
\newtheorem{cor}[itemcounter]{Corollary}
\newtheorem{rem}[itemcounter]{Remark}
\newtheorem*{thm*}{Theorem}
\newtheorem*{con*}{Conjecture}
\newtheorem*{cor*}{Corollary}
\newtheorem*{ack*}{Acknowledgements}
\newcommand{\Inf}{\mathop{\rm Inf}\nolimits}
\newcommand{\Irr}{\mathop{\rm Irr}\nolimits}
\newcommand{\IBr}{\mathop{\rm IBr}\nolimits}
\newcommand{\Hom}{\mathop{\rm Hom}\nolimits}
\newcommand{\Aut}{\mathop{\rm Aut}\nolimits}
\newcommand{\Stab}{\mathop{\rm Stab}\nolimits}
\newcommand{\Br}{\mathop{\rm Br}\nolimits}
\newcommand{\Pic}{\mathop{\rm Pic}\nolimits}
\newcommand{\rk}{\mathop{\rm rk}\nolimits}
\newcommand{\nth}{\mathop{\rm th}\nolimits}
\newcommand{\nd}{\mathop{\rm nd}\nolimits}
\newcommand{\Id}{\mathop{\rm Id}\nolimits}
\newcommand{\CF}{\mathop{\rm CF}\nolimits}
\newcommand{\prj}{\mathop{\rm prj}\nolimits}
\newcommand{\GL}{\mathop{\rm GL}\nolimits}
\newcommand{\cT} {\mathcal{T}}
\newcommand{\cL} {\mathcal{L}}
\newcommand{\cE} {\mathcal{E}}
\newcommand{\cO} {\mathcal{O}}
\newcommand{\cJ} {\mathcal{J}}
\newcommand{\cB} {\mathcal{B}}
\title{On Picard groups of blocks with normal defect groups  \footnote{This research was supported by the EPSRC (grant nos. EP/M015548/1 and EP/T004606/1).} }
\author{Michael Livesey\footnote{School of Mathematics, University of Manchester, Manchester, M13 9PL, United Kingdom. Email: michael.livesey@manchester.ac.uk}}
\date{}
\begin{document}

\maketitle

\begin{abstract}
Let $b$ be a block with normal abelian defect group and abelian inertial quotient. We prove that every Morita auto-equivalence of $b$ has linear source. We note that this improves upon results of Zhou and also Boltje, Kessar and Linckelmann.
\end{abstract}

\section{Introduction}

Let $p$ be a prime, $(K,\cO,k)$ a $p$-modular system with $k$ algebraically closed and $b$ a block of $\cO H$, for a finite group $H$. We always assume that $K$ contains all $|H|^{\nth}$ roots of unity. The Picard group $\Pic(b)$ of $b$ consists of isomorphism classes of $b$-$b$-bimodules which induce $\cO$-linear Morita auto-equivalences of $b$. For $b$-$b$-bimodules $M$ and $N$, the group multiplication is given by $M\otimes_bN$. $\cT(b)$ (respectively $\cL(b)$, $\cE(b)$) will denote the subgroup $\Pic(b)$ of $\cO(H\times H)$-modules with trivial (respectively linear, endopermutation) source.
\newline
\newline
There are many open problems concerning Picard groups. It is proved in~\cite[Corollary 1.2]{ei19} that $\Pic(b)$ is finite. However, it is yet to be proved that $\Pic(b)$ is always bounded in terms of a function of the defect group. There are also no known examples of a block $b$ with $\Pic(b)\neq\cE(b)$. Our main result (see Theorem~\ref{thm:main}) is as follows:

\begin{thm*}
Let $b$ be a block with a normal abelian defect group and abelian inertial quotient, then $\Pic(b)=\cL(b)$.
\end{thm*}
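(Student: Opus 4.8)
The plan is to reduce to a twisted group algebra via Külshammer's theorem, then to force any Morita self-equivalence to have an endopermutation source by exploiting the normality of the defect group, and finally to show that this source is one-dimensional by a computation in the Dade group of the associated fusion system.

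First I would apply Külshammer's description of blocks with normal defect group: since $P\trianglelefteq H$, the source algebra of $b$ is isomorphic as an interior $P$-algebra to $A:=\cO_\alpha L$, where $L=P\rtimes E$, $E$ is the inertial quotient (abelian of order prime to $p$) and $\alpha\in H^2(E,\cO^\times)$ is inflated to $L$. The bimodule realising this source-algebra isomorphism induces a splendid, indeed linear-source, Morita equivalence between $b$ and $A$, hence a group isomorphism $\Pic(b)\cong\Pic(A)$ restricting to isomorphisms $\cT(b)\cong\cT(A)$, $\cL(b)\cong\cL(A)$ and $\cE(b)\cong\cE(A)$. So it suffices to prove $\Pic(A)=\cL(A)$.

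Next let $M$ be an indecomposable $(A,A)$-bimodule inducing an element of $\Pic(A)$, regarded as an indecomposable module in the block $b\otimes b^{\op}$ of $\cO(L\times L)$, which has the normal defect group $P\times P$. Writing $A$ itself as a bimodule, $A\cong\Ind_{\Delta L}^{L\times L}\cO_\mu$ for the linear character $\mu$ of $\Delta L$ determined by $\alpha$; since $\mu$ is inflated from $E$ it is trivial on $\Delta P$ and of order prime to $p$, so $A$ has vertex $\Delta P$ and trivial source. As $A$ is a summand of $M\otimes_A M^*$, a standard vertex argument shows that the vertex of $M$ contains an $(L\times L)$-conjugate of $\Delta P$, and I would then show, using that $P\times P$ is normal (so every conjugate of $\Delta P$ already lies in $P\times P$) together with $M\otimes_A M^*\cong A\cong M^*\otimes_A M$, that in fact the vertex of $M$ is exactly $\Delta P$; this is the step where normality of the defect group is essential, and it is close to results of Zhou on normal-defect blocks. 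Then $M$ is $\Delta P$-projective and induces a Morita equivalence of blocks with common defect group $P$, so by results of Puig its source is an indecomposable endopermutation $\cO\Delta P$-module $V$. Hence $\Pic(A)=\cE(A)$, and $M\in\cL(A)$ if and only if $V$ has $\cO$-rank one.

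It remains to show that $V$ has rank one, and this is the crux and the main obstacle. Identifying $\Delta P$ with $P$, the class $[V]$ lies in the Dade group $D(\cO P)$, and from $M\otimes_A M^*\cong A\cong M^*\otimes_A M$ one checks that $[V]$ is stable under the fusion system $\cF=\cF_P(L)=\cF_P(P\rtimes E)$; thus $[V]$ lies in the subgroup $D^{\cF}(\cO P)$ of $\cF$-stable classes, and it suffices to prove that $D^{\cF}(\cO P)$ consists only of classes of rank-one modules, i.e.\ of elements of $\Hom(P,\cO^\times)$. Here the hypotheses enter decisively: since $E$ is abelian I would decompose $\Hom(P,\cO^\times)$ into its $E$-homogeneous components to pin down $\cF$, and since $|E|$ is prime to $p$ a transfer argument bounds $D^{\cF}(\cO P)$ by the $E$-stable part of $D(kP)$; then, combining the known structure of $D(kP)$ for an abelian $p$-group (Carlson--Th\'evenaz, Bouc--Th\'evenaz) with the two requirements that a class be $\cF$-stable \emph{and} lift from $k$ to $\cO$, I would rule out the relative syzygies $\Omega_{P/Q}$ and their $2$-torsion companions, leaving only $\Hom(P,\cO^\times)$. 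Neither requirement suffices on its own --- there are $\cF$-stable endopermutation $kP$-modules of rank greater than one, and endopermutation $\cO P$-modules of rank greater than one that are not $\cF$-stable --- so the argument must exploit their interaction for the specific fusion system of $P\rtimes E$ with $E$ abelian of order prime to $p$; making this work for an arbitrary abelian inertial quotient, rather than a cyclic one as in the work of Boltje--Kessar--Linckelmann, is the real content. Once $[V]=0$ in $D(\cO P)$, the module $V$ is one-dimensional, $M\in\cL(A)$, and the theorem follows.
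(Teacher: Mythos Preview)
Your reduction via K\"ulshammer to the twisted group algebra is correct and matches the paper. The gap is in the final step. You assert that it suffices to show that $D^{\cF}(\cO P)$ consists only of rank-one classes, and propose to achieve this by combining $\cF$-stability with liftability from $k$ to $\cO$. But this sufficient condition is simply false. Take $P=C_p$ with $p$ odd and $E=\Aut(P)\cong C_{p-1}$ acting faithfully; then the Heller translate $\Omega=\Omega_{\cO P}(\cO)=\ker(\cO P\to\cO)$ is an indecomposable endopermutation $\cO P$-lattice of rank $p-1>1$, it is visibly $\Aut(P)$-stable (the augmentation ideal is preserved by every automorphism), and it is defined over $\cO$. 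So $[\Omega]\in D^{\cF}(\cO P)$ is not a rank-one class, yet the theorem holds for this block (indeed $\Pic=\cT$ here). Your two constraints --- $\cF$-stability and lifting to $\cO$ --- do not rule out $\Omega$, so the proposed mechanism cannot work as stated. What is really needed is to identify which classes in $D^{\cF}(\cO P)$ actually occur as sources of Morita self-equivalences of $A$; this is a much finer question than describing $D^{\cF}(\cO P)$ itself, and you give no method for it. (There is also a softer issue earlier: the argument that the vertex of $M$ is exactly $\Delta P$, and hence that the source is endopermutation, is only sketched; since $\Pic=\cE$ is not known in general, this step needs more care, though it is plausibly salvageable in the normal-defect setting.)

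The paper's proof takes an entirely different route and never touches the Dade group. After the same reduction to $B=\cO(D\rtimes E)e_\varphi$, it proceeds character-theoretically and algebraically. First, a perfect-isometry computation (Lemma~\ref{lem:D2kernel}) shows that, up to tensoring by a linear character of $D_2=C_D(E)$, any Morita self-equivalence sends the height-zero characters $(1_D,\chi)$ into $\Irr(B,1_{D_2})$. Second, and this is the heart of the paper, a detailed analysis of the basic algebra of $B$ modulo $p$ (and modulo $\cJ\cdot p\cO$ when $p=2$) --- a quantum-complete-intersection presentation in the spirit of Benson--Green and Holloway--Kessar --- shows that every $\cO$-algebra automorphism preserves the ideal generated by $D_1=[D,E]$, so the permutation of characters also respects $\Irr(B,1_{D_1})$ (Lemma~\ref{lem:D1kernel}). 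Combining these, the self-equivalence permutes $\Irr(B,1_D)$, and then Weiss' criterion (Proposition~\ref{prop:indMor}) forces the bimodule to have trivial source once the linear twist is removed. So the paper replaces your Dade-group computation with Weiss' permutation-module theorem together with explicit control of automorphisms of the basic algebra over $\cO/p\cO$; this is where the abelian inertial quotient is genuinely used.
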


We note that this improves upon a result of Zhou~\cite[Theorem 14]{zh05}. Zhou proves that if $b=\cO(D\rtimes E)$, where $D$ is an abelian $p$-group and $E$ is an abelian $p'$-group of $\Aut(D)$, then $\Pic(b)=\cL(b)$. We can also compare with a result of Boltje, Linckelmann and Kessar~\cite[Proposition 4.3]{bkl18}, where it is assumed in addition that $[D,E]=D$ but the result is that $\Pic(b)=\cT(b)$. Note that this result follows immediately from Corollary~\ref{cor:main}.
%

The following notation will hold throughout this article. We define $\overline{\phantom{A}}:\cO\to k$ to be the natural quotient map and, for a finite group $H$, we extend this to the corresponding ring homomorphism $\overline{\phantom{A}}:\cO H\to kH$. A block $b$ of $H$ will always mean a block of $\cO H$. We set $\Irr(H)$ (respectively $\IBr(H)$) to be the set of ordinary irreducible (respectively irreducible Brauer) characters of $H$ and $\Irr(b)\subseteq\Irr(H)$ (respectively $\IBr(b)\subseteq\IBr(H)$) the set of ordinary irreducible (respectively irreducible Brauer) characters lying in the block $b$. If $N\lhd H$ and $\chi\in\Irr(N)$, then we denote by $\Irr(H,\chi)$ the set of irreducible characters of $H$ appearing as constituents of $\chi\uparrow^H$. Similarly, we define $\Irr(b,\chi):=\Irr(b)\cap\Irr(H,\chi)$. $1_H\in\Irr(H)$ will designate the trivial character of $H$. We use $e_b\in\cO H$ to denote the block idempotent of $b$ and $e_\chi\in KH$ to denote the character idempotent associated to $\chi\in\Irr(H)$. Finally, we set $[h_1,h_2]:=h_1^{-1}h_2^{-1}h_1h_2$, for $h_1,h_2\in H$.
\newline
\newline
The article is organised as follows. In $\S$\ref{sec:abab} we establish some preliminaries about abelian $p'$-groups acting on abelian $p$-groups. We introduce a particular block with normal abelian defect group and abelian inertial quotient in $\S$\ref{sec:Bchar}. $\S$\ref{sec:perfisom} is concerned with perfect isometries and how they relate to our main theorem. In $\S$\ref{sec:one} we study the specifc case of a block with one simple module in greater detail and our main theorem is proved in $\S$\ref{sec:main}.

\section{Abelian $p'$-groups acting on abelian $p$-groups}\label{sec:abab}

\begin{defi}\label{def:action}
Let $H$ be a finite abelian $p'$-subgroup of $\Aut(P)$, for some abelian $p$-group $P$. We say $H$ acts on $P$. If there exists a non-trivial direct decomposition $P\cong P_1\times P_2$ such that $P_1$ and $P_2$ are both $H$-invariant, then we say $H$ acts decomposably on $P$. Otherwise we say $H$ acts indecomposably on $P$.
\end{defi}

We note that our definition of $H$ acting on $P$ is non-standard in that we are demanding that $H$ is a finite $p'$-group, $P$ is a finite abelian $p$-group and that $H$ is a subgroup of $\Aut(P)$ (usually called a faithful action) and not just that we have a group homomorphism $H\to\Aut(P)$. Whenever we have $H$ acting on $P$ we always have the semi-direct product $P\rtimes H$, defined through this action, in mind. We will borrow notation from this setup, for example $C_P(H)$ will denote the set of fixed points in $P$ under the action of $H$.

\begin{rem}\label{rem:ind}
By~\cite[$\S3$, Theorem 3.2]{gor80} we need only require $P_1$ to be $H$-invariant in Definition~\ref{def:action}. Note also that~\cite[$\S5$, Theorem 2.2]{gor80} says that if $H$ acts indecomposably on $P$, then $P$ is necessarily homocyclic.
\end{rem}

The following is proved in~\cite[Theorem 2.3]{gor80}.

\begin{lem}\label{lem:decompP}
Let $H$ act on $P$. Then $P=[P,H]\times C_P(H)$.
\end{lem}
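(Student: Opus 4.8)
The plan is to run the standard coprime-action averaging argument, working additively since $P$ is abelian. Write the action of $h\in H$ on $x\in P$ as $x\mapsto{}^h x$; then $[P,H]$, a priori a subgroup of $P\rtimes H$, is contained in $P$ and is generated by the elements ${}^h x-x$ (for $h\in H$, $x\in P$), and since $H$ is abelian it is an $H$-invariant subgroup of $P$. Because $|P|$ is a power of $p$ while $|H|$ is prime to $p$, the integer $|H|$ is invertible modulo $\exp(P)$; fixing $m\in\ZZ$ with $m|H|x=x$ for all $x\in P$, define $e\colon P\to P$ by $e(x)=m\sum_{h\in H}{}^h x$.

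First I would verify that $e$ is an $H$-equivariant endomorphism of $P$: additivity is immediate, and ${}^g(e(x))=m\sum_{h\in H}{}^{gh}x=e(x)$ because $H$ acts on $P$. Hence $\mathrm{im}(e)\subseteq C_P(H)$, while for $x\in C_P(H)$ we have $e(x)=m|H|x=x$, so $e$ is the identity on $C_P(H)$. Thus $e^2=e$ and $\mathrm{im}(e)=C_P(H)$, giving the internal direct sum $P=C_P(H)\oplus\ker(e)$.

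It then remains to show $\ker(e)=[P,H]$. One inclusion is immediate from $H$-equivariance: $e({}^h x-x)=e(x)-e(x)=0$, so $[P,H]\subseteq\ker(e)$. For the reverse, note that for any $x\in P$ we have $x-e(x)=m\sum_{h\in H}(x-{}^h x)\in[P,H]$, so that $P=C_P(H)+[P,H]$; then for $y\in\ker(e)$, writing $y=c+d$ with $c\in C_P(H)$ and $d\in[P,H]\subseteq\ker(e)$ forces $c=y-d\in C_P(H)\cap\ker(e)=0$, whence $y=d\in[P,H]$. Therefore $\ker(e)=[P,H]$ and $P=C_P(H)\oplus[P,H]$. (Equivalently, one may view $P$ as a module over the group ring $(\ZZ/\exp(P))[H]$, which is semisimple by coprimality, with $e$ the action of the central idempotent $|H|^{-1}\sum_{h\in H}h$; then $C_P(H)=eP$ and $[P,H]=(1-e)P$.)

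The argument is routine; the only places deserving a little care are the bookkeeping between additive and multiplicative notation — in particular checking that the commutator subgroup $[P,H]$ really coincides with the additive subgroup $\langle{}^h x-x\rangle\le P$ and is $H$-invariant — and the observation that to conclude $\ker e=[P,H]$ (not merely the inclusion $[P,H]\subseteq\ker e$) one uses all three of: $[P,H]\subseteq\ker e$, $P=C_P(H)+[P,H]$, and directness of the sum $P=C_P(H)\oplus\ker e$. Of course this is exactly \cite[Theorem 2.3]{gor80}, but the averaging proof above is self-contained.
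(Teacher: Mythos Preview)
Your argument is correct; it is the standard coprime averaging/idempotent proof. The paper does not actually prove this lemma at all: it simply records it as \cite[Theorem~2.3]{gor80} and moves on. So there is nothing to compare at the level of method---you have supplied a self-contained proof where the paper gives only a citation. One small remark: when you invoke ``$H$-equivariance'' to get $e({}^h x-x)=0$, what you need is $e({}^h x)=e(x)$, which is the reindexing $\sum_g{}^{gh}x=\sum_{g'}{}^{g'}x$; this is the same computation as the one you wrote for ${}^g e(x)=e(x)$, but strictly speaking a different statement, so it would be cleaner to state it directly rather than call it equivariance. Also, the $H$-invariance of $[P,H]$ does not require $H$ abelian (it holds whenever $P$ is normalised by $H$), though of course the hypothesis is available here.
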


\begin{lem}\label{lem:actirr}
Let $H$ act on $P$. The natural action of $H$ on $\Irr(P)$ has a non-trivial fixed point if and only if its action on $P$ does.
\end{lem}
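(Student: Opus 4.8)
The plan is to compute the fixed-point group $C_{\Irr(P)}(H)$ explicitly. Since $P$ is abelian, $\Irr(P)=\Hom(P,K^\times)$, and the natural action of $H\le\Aut(P)$ on $\Irr(P)$ is $(h\cdot\lambda)(x)=\lambda(h^{-1}(x))$ for $\lambda\in\Irr(P)$, $h\in H$ and $x\in P$. The key step is the identity
\[ C_{\Irr(P)}(H)=\Irr(P/[P,H]), \]
where the right-hand side is regarded as a subgroup of $\Irr(P)$ by inflation along the quotient map $P\to P/[P,H]$. To prove this, observe that $\lambda\in\Irr(P)$ is fixed by $H$ if and only if $\lambda(h^{-1}(x))=\lambda(x)$ for all $h\in H$ and $x\in P$; as $\lambda$ is a homomorphism and $P$ is abelian, this is equivalent to $\lambda\big(h^{-1}(x)x^{-1}\big)=1$ for all such $h,x$, i.e.\ to $\lambda$ being trivial on the subgroup generated by all $h(x)x^{-1}$ with $h\in H$, $x\in P$, which is exactly $[P,H]$. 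Hence $\lambda$ is $H$-fixed if and only if $[P,H]\subseteq\ker\lambda$, which is the claimed identity.

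Next I would invoke Lemma~\ref{lem:decompP}, which gives $P=[P,H]\times C_P(H)$, so that $P/[P,H]\cong C_P(H)$. Combining this with the identity above yields
\[ C_{\Irr(P)}(H)=\Irr(P/[P,H])\cong\Irr(C_P(H)), \]
and in particular $|C_{\Irr(P)}(H)|=|C_P(H)|$. Thus $C_{\Irr(P)}(H)$ is trivial exactly when $C_P(H)$ is trivial, equivalently the action of $H$ on $\Irr(P)$ has a non-trivial fixed point if and only if its action on $P$ does.

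There is no real obstacle here; the only point needing care is fixing the direction of the $H$-action on $\Irr(P)$ and verifying that ``$H$-fixed'' translates into ``trivial on $[P,H]$'' rather than on some other subgroup. One could instead argue the two implications separately, each reducing via Lemma~\ref{lem:decompP} to the remark that $[P,H]=P$ forces $P/[P,H]$, and hence $\Irr(P/[P,H])$, to be trivial; but packaging everything through the single displayed identity is the cleanest route.
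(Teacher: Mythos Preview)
Your proof is correct. Both your argument and the paper's hinge on Lemma~\ref{lem:decompP}, but they package the converse direction differently. You establish the explicit identity $C_{\Irr(P)}(H)=\Irr(P/[P,H])$ and then combine it with $P/[P,H]\cong C_P(H)$ to obtain $|C_{\Irr(P)}(H)|=|C_P(H)|$, which handles both implications at once. The paper instead proves only the forward implication directly (a non-trivial fixed point in $P$ yields one in $\Irr(P)$ via the decomposition $P=[P,H]\times C_P(H)$) and then obtains the converse by the duality observation that the $H$-action on $P$ coincides with that on $\Irr(\Irr(P))$, so the forward implication applied with $\Irr(P)$ in place of $P$ gives what is needed. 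Your route is more explicit and self-contained; the paper's duality trick is terser but ultimately encodes the same content.
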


\begin{proof}
By Lemma~\ref{lem:decompP}, $P=[P,H]\times C_P(H)$. Therefore, if $P$ has a non-trivial fixed point we can certainly construct some non-trivial fixed point of $\Irr(P)$. The converse follows since we can identify the action of $H$ on $P$ with that of $H$ on $\Irr(\Irr(P))$.
\end{proof}

We denote by $\Phi(P)$ the Frattini subgroup of $P$.

\begin{lem}\label{lem:fratt}
Let $H$ act indecomposably on $P\cong (C_{p^n})^m$, for some $m,n\in\mathbb{N}$. Then we have an induced action of $H$ on $P/\Phi(P)\cong (C_p)^m$ and this action is also indecomposable.
\end{lem}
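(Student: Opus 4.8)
The plan is to pass to module theory over the ring $R:=\ZZ/p^n\ZZ$. Since $P\cong(C_{p^n})^m$ has exponent $p^n$, its $H$-invariant subgroups are exactly its $R[H]$-submodules, and an $H$-invariant internal direct product decomposition of $P$ is exactly a direct sum decomposition of $R[H]$-modules; likewise $\Phi(P)=pP$, so $\bar P:=P/\Phi(P)\cong(C_p)^m$ is an $\mathbb{F}_p[H]$-module and its $H$-invariant direct decompositions are its $\mathbb{F}_p[H]$-module decompositions. (That $H$ really acts on $\bar P$ in the sense of Definition~\ref{def:action}, i.e.\ faithfully, is the classical fact that the kernel of $\Aut(P)\to\Aut(P/\Phi(P))$ is a $p$-group, combined with $H$ being a $p'$-group; see e.g.~\cite{gor80}.) Hence the lemma is equivalent to the implication: if $\bar P$ is a decomposable $\mathbb{F}_p[H]$-module then $P$ is a decomposable $R[H]$-module, which is what I shall prove.

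The device is the reduction homomorphism
\[
\rho\colon\End_{R[H]}(P)\longrightarrow\End_{\mathbb{F}_p[H]}(\bar P),\qquad f\longmapsto\bar f,
\]
which I claim is surjective with nilpotent kernel. The kernel consists of those $f$ with $f(P)\subseteq pP$; such an $f$ satisfies $f^i(P)\subseteq p^iP$ and hence $f^n=0$. For surjectivity --- the one place the hypothesis $p\nmid|H|$ is used --- note that $P$ is free over $R$, so $\End_R(P)\to\End_{\mathbb{F}_p}(\bar P)$ is surjective; lifting $\bar g\in\End_{\mathbb{F}_p[H]}(\bar P)$ to some $g_0\in\End_R(P)$ and averaging, $g:=\tfrac1{|H|}\sum_{h\in H}hg_0h^{-1}$ lies in $\End_{R[H]}(P)$, and since $\bar g$ is $H$-equivariant one gets $\rho(g)=\tfrac1{|H|}\sum_{h\in H}h\bar gh^{-1}=\bar g$.

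Now idempotents lift along a surjection with nilpotent kernel. So if $\bar P$ is decomposable there is an idempotent $\bar e\in\End_{\mathbb{F}_p[H]}(\bar P)$ with $\bar e\notin\{0,1\}$, which we lift to an idempotent $e\in\End_{R[H]}(P)$; then $\rho(e)=\bar e\notin\{0,1\}$ forces $e\notin\{0,1\}$, so $P=eP\oplus(1-e)P$ is a non-trivial decomposition of $P$ into $H$-invariant subgroups, i.e.\ $H$ acts decomposably on $P$. This is the contrapositive of the assertion, so we are done. The only step requiring care is the surjectivity of $\rho$, specifically the averaging operator, which is exactly where we exploit that $H$ is a $p'$-group; alternatively one can run the argument concretely by lifting an $\mathbb{F}_p$-linear projection of $\bar P$ onto a proper non-zero $\mathbb{F}_p[H]$-summand, averaging it over $H$, and correcting the resulting $H$-equivariant near-idempotent to an honest idempotent by a Newton-type polynomial identity --- the same argument with the idempotent-lifting lemma unwound.
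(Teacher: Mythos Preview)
Your proof is correct and takes a genuinely different route from the paper's. The paper argues by explicit construction: assuming $\bar P=P/\Phi(P)$ decomposes, it picks $x\in P$ whose image lies in a proper $H$-invariant direct factor of $\bar P$, forms the $H$-invariant subgroup $Q$ of $P$ generated by the $H$-orbit of $x$ (so $Q$ is proper), extracts via the Maschke-type facts recorded in Remark~\ref{rem:ind} an $H$-invariant homocyclic direct factor $Q'$ of $Q$ with $Q'\not\le\Phi(P)$, observes that then $Q'\cong(C_{p^n})^{m'}$ with $m'<m$ is automatically a direct factor of $P$, and invokes Remark~\ref{rem:ind} once more to complement it $H$-equivariantly. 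Your approach instead recasts everything as module theory over $R=\ZZ/p^n\ZZ$ and lifts idempotents along the surjection $\End_{R[H]}(P)\twoheadrightarrow\End_{\mathbb{F}_p[H]}(\bar P)$ with nilpotent kernel, surjectivity coming from the averaging operator since $|H|\in R^\times$. This is cleaner and more conceptual: it sidesteps the somewhat delicate check that a free $R$-submodule of $P$ of full exponent is a direct summand, and the idempotent-lifting mechanism makes transparent exactly where the $p'$-hypothesis enters. The paper's argument, on the other hand, stays within the group-theoretic language of \S\ref{sec:abab} and yields the offending $H$-invariant direct factor of $P$ explicitly.
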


\begin{proof}
The fact that we have an induced action follows from~\cite[$\S5$, Theorem 1.4]{gor80}. Assume $H$ acts decomposably on $P/\Phi(P)$. Let $x\in P\backslash\Phi(P)$ be such that $x\Phi(P)$ is contained in a non-trivial $H$-invariant direct factor of $P/\Phi(P)$ and consider the smallest $H$-invariant subgroup $Q$ of $P$ containing $x$. Certainly $\{1\}<Q<P$ and $Q\nleq\Phi(P)$. So, by Remark~\ref{rem:ind}, there exists some $H$-invariant homocyclic direct factor $Q'$ of $Q$ also satisfying $\{1\}<Q'<P$ and $Q'\nleq\Phi(P)$. So $Q'\cong C_{p^n}^{m'}$ for some $1\leq m'<n$. In particular $Q'$ is an $H$-invariant direct factor of $P$. Again by Remark~\ref{rem:ind}, this contradicts the indecomposablity of the action of $H$ on $P$.
\end{proof}

In what follows, $J(kP)$ will denote the Jacobson radical of $kP$.

\begin{lem}\label{lem:jacobi}
Let $H$ act indecomposably on $P\cong (C_{p^n})^m$, for some $m,n\in\mathbb{N}$.
\begin{enumerate}
\item $H$ is cyclic and if $g$ is a generator of $H$ then $g$ has $m$ distinct eigenvalues
\begin{align*}
\{\lambda=\lambda^{p^m},\lambda^p,\dots,\lambda^{p^{m-1}}\},
\end{align*}
as a linear transformation of $k\otimes_{\mathbb{F}_p}P/\Phi(P)$.
\item Any non-trivial $g\Psi(P)\in P/\Psi(P)$ has trivial stabiliser in $H$.
\item The representations of $H$ on $k\otimes_{\mathbb{F}_p}P/\Phi(P)$ and $J(kP)/J^2(kP)$ are isomorphic.
\end{enumerate}
\end{lem}

\begin{proof}
Certainly $g^p-1=(g-1)^p\in J^2(kP)$ for any $g\in P$ and so the natural group homomorphism $P\to P/\Phi(P)$ induces an isomorphism
\begin{align*}
J(kP)/J^2(kP)\to J(k(P/\Phi(P)))/J^2(k(P/\Phi(P))).
\end{align*}
Therefore, since by Lemma~\ref{lem:fratt} we have an indecomposable action of $H$ on $P/\Phi(P)$, we assume for the remainder of the proof that $P$ is elementary abelian.
\begin{enumerate}
\item We identify $P$ with $\mathbb{F}_p^m$ and view $H$ as a subgroup of $\mathbf{G}:=\GL_m(\mathbb{F}_p)$. Let $g$ be an element of maximal order in $H$. We factorise the characteristic polynomial of $g$ into irreducible factors $f_1(X)^{n_1}.\dots.f_s(X)^{n_s}$ in $\mathbb{F}_p[X]$, where $f_i(X)$ and $f_j(X)$ are coprime for $i\neq j$. We first note that
\begin{align*}
\{v\in\mathbb{F}_p^m|f_1(g)v=0\}
\end{align*}
is a non-trivial $H$-invariant subspace of $\mathbb{F}_p^m$. Therefore, since $H$ acts indecomposably, we must have $f_1(g)=0$, in particular $s=1$ and $f_1(X)$ has degree $d:=m/n_1$. It follows that $o(g)|(p^d-1)$ and $d$ is the smallest positive integer satisfying this condition, where $o(g)$ is the order of $g$. Then $C_{\mathbf{G}}(g)\cong\GL_{n_1}(\mathbb{F}_{p^d})$ and $g$ is represented in $C_{\mathbf{G}}(g)$ by the scalar matrix with $\lambda$'s on the diagonal for some $\lambda\in\mathbb{F}_{p^d}$ a root of $f_1(X)$ (see for example~\cite[Proposition 1A]{fosr82}).
\newline
\newline
Certainly $o(h)|o(g)$ for each $h\in H\leq C_{\mathbf{G}}(g)$ and so the characteristic polynomial of $h$ in $C_{\mathbf{G}}(g)\cong\GL_{n_1}(\mathbb{F}_{p^d})$ must factorise into linear factors. Exactly as for $g$ in $\GL_m(\mathbb{F}_p)$, the characteristic polynomial of $h$ in $\GL_{n_1}(\mathbb{F}_{p^d}$, must be the power of an irreducible polynomial. Therefore, $h$ is also a scalar matrix in $C_{\mathbf{G}}(g)$ and, since $o(h)|o(g)$, it must be a power of $g$ proving $H$ is cyclic. In particular, $\mathbb{F}_p^m$ decomposes into the direct sum of $n_1$ $H$-invariant subspaces and so, in fact, $n_1=1$. The eigenvalues of $g$ are now just the roots of $f_1(X)$. Since $f_1(X)$ has degree $m$, this proves the first part of the lemma.
\item Note that any power $h$ of $g$ from part (1) has $1$ as an eigenvalue if and only if $h=1$. In other words, $C_H(x)=\{1\}$ for any $x\in P\backslash\{1\}$.
\item We claim that the representation of $H$ on $k\otimes_{\mathbb{F}_p}P$ is isomorphic to that of $H$ on $J(kP)/J^2(kP)$ via
\begin{align*}
k\otimes_{\mathbb{F}_p}P&\to J(kP)/J^2(kP)\\
x&\mapsto1-x,
\end{align*}
for each $x\in P$. The fact that we have a homomorphism follows from
\begin{align*}
1-x^i=i(1-x)+(1-x)[(1+x+\dots+x^{i-1})-i]\in i(1-x)+J^2(kP),
\end{align*}
for all $x\in P$ and $i\in\mathbb{N}$. Next we note that $\{1-x_1,\dots,1-x_m\}$ forms a basis for $J(kP)/J^2(kP)$, where $P=\langle x_1\rangle\times\dots\times\langle x_m\rangle$ and hence we have both surjectivity and injectivity.
\end{enumerate}
\end{proof}

We continue with the hypotheses of Lemma~\ref{lem:jacobi}. Set
\begin{align*}
J_{\cO}(P):=\left\{\sum_{x\in P}\alpha_xx|\sum_{x\in P}\alpha_x=0\right\}=(1-x_1,\dots,1-x_m)\triangleleft\cO P,
\end{align*}
where $P=\langle x_1\rangle\times\dots\times\langle x_m\rangle$. Set
\begin{align*}
J_{\cO,2}(P):=\{x\in J_{\cO}(P)|\overline{x}\in J^2(kP)\}.
\end{align*}
Since $\{1-x_1,\dots,1-x_m\}$ is a basis for $J(kP)/J^2(kP)$ we have
\begin{align*}
J_{\cO}(P)&\backslash J_{\cO,2}(P):=\\
&\{a_1(1-x_1)+\dots+a_m(1-x_m)\in J_{\cO}(P)|a_i\in(\cO P)^\times\text{ for at least one }i\}.
\end{align*}

\begin{lem}\label{lem:tech}
Let $x\in J_{\cO}(P)$, then $x^{p^n}=py$, for some $y\in J_{\cO}(P)$. If, in addition, $p=2$, $n=1$ and $x\in J_{\cO}(P)\backslash J_{\cO,2}(P)$, then $y\in J_{\cO}(P)\backslash J_{\cO,2}(P)$.
\end{lem}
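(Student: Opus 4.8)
The plan is to work in the group algebra $\cO P$ and prove the two claims by direct, but carefully organised, computation using the fact that $J_\cO(P)$ is generated as an ideal by the elements $1-x_i$. For the first claim, write $x=\sum_{i=1}^m a_i(1-x_i)$ with $a_i\in\cO P$ (possible since the $1-x_i$ generate $J_\cO(P)$). Expanding $x^{p^n}$ by the multinomial theorem, every term is a product of $p^n$ factors drawn from the $a_j(1-x_j)$, so it suffices to control expressions of the form $\prod_{\ell=1}^{p^n} a_{j_\ell}(1-x_{j_\ell})$. Pulling the $a_{j_\ell}$ out (they commute, $\cO P$ being commutative), we reduce to showing that any product of $p^n$ elements of the form $1-x_{j_\ell}$, each $x_{j_\ell}\in P$, lies in $p\,J_\cO(P)$; indeed then the whole term lies in $p\,(\cO P)\cdot J_\cO(P)=p\,J_\cO(P)$, and summing over all multinomial terms keeps us in $p\,J_\cO(P)$.

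To handle a single product $\prod_{\ell=1}^{p^n}(1-x_{j_\ell})$, I would first reduce to the case where all the $x_{j_\ell}$ are equal: using $(1-x)(1-y)=(1-x)+(1-y)-(1-xy)$ repeatedly (equivalently, $1-xy=(1-x)+(1-y)-(1-x)(1-y)$), one can rewrite any product of $q$ elements $1-x_{j_\ell}$ as an $\cO$-linear combination of products of the form $(1-z)^{q'}$ with $q'\le q$, $z\in P$ — here one must keep track of the fact that lowering the exponent is exactly what happens when two factors coincide, so an induction on the number of factors works, with the base case a single $1-z$. So it remains to show $(1-z)^{p^n}\in p\,J_\cO(P)$ for each $z\in P$. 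Since $z$ has order dividing $p^n$, we have $z^{p^n}=1$, hence $(1-z)^{p^n}=1-z^{p^n}+\big((1-z)^{p^n}-(1-z^{p^n})\big)=0+\big((z-1)^{p^n}-(z^{p^n}-1)\big)$, wait — more cleanly: $1-z^{p^n}=0$, and $(1-z)^{p^n}-\sum_{k}\binom{p^n}{k}(-z)^k$ is identically zero, so $(1-z)^{p^n}=\sum_{k=0}^{p^n}\binom{p^n}{k}(-1)^k z^k$; the $k=0$ and $k=p^n$ terms are $1$ and $-z^{p^n}=-1$, which cancel, and every remaining binomial coefficient $\binom{p^n}{k}$ with $0<k<p^n$ is divisible by $p$. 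Hence $(1-z)^{p^n}=p\sum_{0<k<p^n}\tfrac{1}{p}\binom{p^n}{k}(-1)^k z^k$, and that sum of group elements with integer coefficients summing to $\tfrac1p\big((1-1)^{p^n}-1-(-1)^{p^n}\cdot(\text{adjust})\big)$... in any case the bracketed sum lies in $J_\cO(P)$ because the total coefficient sum of $(1-z)^{p^n}$ is $0$ and $p$ is invertible in $\ZZ$, so dividing by $p$ leaves the augmentation $0$. This establishes $x^{p^n}\in p\,J_\cO(P)$.

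For the second claim, set $p=2$, $n=1$, and suppose $x\in J_\cO(P)\setminus J_{\cO,2}(P)$, so $x=a_1(1-x_1)+\dots+a_m(1-x_m)$ with some $a_i\in(\cO P)^\times$; after reindexing say $a_1$ is a unit. I want to show the $y$ produced above, with $2y=x^2$, again has a unit coefficient. Compute $x^2$ modulo $4$ and modulo $2\,J_{\cO,2}(P)$: cross terms $2a_ia_j(1-x_i)(1-x_j)$ are already in $2\,J_{\cO,2}(P)$ since $(1-x_i)(1-x_j)\in J^2$, so they contribute nothing to $y$ mod $J_{\cO,2}(P)$; thus $2y\equiv\sum_i a_i^2(1-x_i)^2\pmod{2\,J_{\cO,2}(P)}$. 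Now $(1-x_i)^2=1-2x_i+x_i^2$, and if $x_i$ has order $2$ this is $2(1-x_i)$, while in general $(1-x_i)^2=2(1-x_i)+\big(x_i^2-1-(2x_i-2)\big)$... the point is $(1-x_i)^2=2(1-x_i)+(1-x_i^2)-2(1-x_i)+\dots$; cleanly, $(1-x_i)^2=(1-x_i^2)+2(x_i-x_i^2) = (1-x_i^2) - 2x_i(1-x_i)$, so $(1-x_i)^2\equiv(1-x_i^2)\pmod{2 J_\cO(P)}$, and $1-x_i^2\in J_\cO(P)$ with $1-x_i^2\in J^2(kP)$ precisely when $x_i^2\in\Phi(P)$, i.e. essentially always once $n\ge 1$ — hmm, this is where I must be careful. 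The honest approach: reduce mod $4$, write $2y=x^2$ and extract $y$ as $\tfrac12 x^2$, then reduce $\overline y\in kP$ and show $\overline y\notin J^2(kP)$ by exhibiting a nonzero image in $J(kP)/J^2(kP)$. The squaring map on $J_\cO(P)/2\cdot(\cdots)$ composed with division by $2$ should correspond, via part (3) of Lemma~\ref{lem:jacobi} and the explicit Frobenius-type behaviour of $(1-x)\mapsto(1-x)^2=2\cdot\tfrac12(\cdots)$, to a Frobenius twist on $J(kP)/J^2(kP)$, which is injective; so a unit coefficient in $x$ forces a unit coefficient in $y$.

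The main obstacle I anticipate is the second claim: keeping precise track of which terms survive modulo $J_{\cO,2}(P)$ after dividing by $2$, since the division by $2$ can move things between $J_{\cO,2}(P)$ and its complement, and the interaction of squaring with the Frattini quotient (where $x_i^2$ may or may not be trivial) needs the $p=2$, $n=1$ hypothesis in an essential way — presumably the author intends $n=1$ so that $x_i^2=1$ and $(1-x_i)^2=2(1-x_i)$ exactly, making $y\equiv\sum a_i^2(1-x_i)$ modulo $2\,J_{\cO,2}(P)$, whence the coefficient of $1-x_1$ in $y$ is $a_1^2$ plus something in $J_{\cO}(P)$, and $a_1^2$ is a unit. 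That is the clean path, and I would write the argument to funnel directly into it.
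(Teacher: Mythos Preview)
Your argument for the first claim has a genuine gap. You reduce to showing that \emph{every} product $\prod_{\ell=1}^{p^n}(1-x_{j_\ell})$ lies in $p\,J_\cO(P)$, but this is false: for $p=2$, $n=1$, $P=\langle x_1\rangle\times\langle x_2\rangle\cong C_2\times C_2$, the product $(1-x_1)(1-x_2)=1-x_1-x_2+x_1x_2$ has all odd coefficients and is not in $2\,J_\cO(P)$. Your rewriting $(1-x)(1-y)=(1-x)+(1-y)-(1-xy)$ makes this worse, not better: it expresses a degree-$2$ product as a $\ZZ$-combination of degree-$1$ elements $1-z$, none of which lies in $p\,J_\cO(P)$. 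The divisibility by $p$ in $x^{p^n}$ does not live in the individual monomials; it comes from the multinomial \emph{coefficients} on the off-diagonal terms, together with your (correct) binomial computation for the pure powers $(1-x_i)^{p^n}$.

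The paper's proof is exactly this separation. By the freshman's-dream congruence,
\[
\Big(\sum_i a_i(1-x_i)\Big)^{p^n}\equiv\sum_i a_i^{p^n}(1-x_i)^{p^n}\pmod{p\,J_{\cO,2}(P)},
\]
since every off-diagonal multinomial term carries a coefficient divisible by $p$ and, being a product of at least two factors $1-x_j$, lies in $J_{\cO,2}(P)$. One then checks $(1-x_i)^{p^n-1}\equiv 1+x_i+\dots+x_i^{p^n-1}\pmod{p}$, whence $(1-x_i)^{p^n}\in p(1-x_i)\cO P$; your binomial argument for $(1-z)^{p^n}$ with $z^{p^n}=1$ would serve equally well here. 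For the second claim your final paragraph is exactly the paper's argument: when $n=1$ one has $(1-x_i)^2=2(1-x_i)$ on the nose, so the displayed congruence gives $x^2\equiv 2\sum_i a_i^2(1-x_i)\pmod{2\,J_{\cO,2}(P)}$, hence $y\equiv\sum_i a_i^2(1-x_i)\pmod{J_{\cO,2}(P)}$, and a unit $a_i$ yields a unit $a_i^2$. The detours through $(1-x_i^2)$ and Frobenius twists are unnecessary.
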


\begin{proof}
Let
\begin{align*}
x=a_1(1-x_1)+\dots+a_m(1-x_m)\in J_{\cO}(P),
\end{align*}
for some $a_i\in\cO P$. Then
\begin{align*}
&(a_1(1-x_1)+\dots+a_m(1-x_m))^{p^n}\\
\equiv&(a_1(1-x_1))^{p^n}+\dots+(a_m(1-x_m))^{p^n}\mod p J_{\cO,2}(P).
\end{align*}
By calculating in $\mathbb{F}_pP$ we have that
\begin{align*}
(1-x_i)^{p^n-1}\in 1+x_i+x_i^2+\dots+x_i^{p^n-1}+p\mathbb{Z}P
\end{align*}
and therefore that $(1-x_i)^{p^n}\in p(1-x_i)\cO P$, for each $1\leq i\leq m$. The first claim follows.
\newline
\newline
If $p=2$ and $n=1$, then $(1-x_i)^2=2(1-x_i)$ and so
\begin{align*}
&(a_1(1-x_1)+\dots+a_m(1-x_m))^2\\
\equiv& 2a_1^2(1-x_1)+\dots+2a_m^2(1-x_m)\mod 2J_{\cO,2}(P)
\end{align*}
and the second claim follows from the comments preceding the Lemma.
\end{proof}

\section{$\cO(D\rtimes E)e_\varphi$ and its characters}\label{sec:Bchar}

We set the following notation that will hold for the rest of the article. Let $D$ be a finite abelian $p$-group, $E$ a finite $p'$-group and $Z\leq E$ a central, cyclic subgroup such that $L:=E/Z$ is abelian. Let $L$ act on $D$ set $G:=D\rtimes E$ through this action. We study $B:=\cO Ge_\varphi$, where $\varphi$ is a faithful character of $Z$. Since $D\lhd G$, any block idempotent of $\cO G$ is supported on $C_G(D)=D\times Z$. Therefore, $B_\varphi$ is a block of $\cO G$ with defect group $D$. Set $D_1:=[D,E]$ and $D_2:=C_D(E)$. By Lemma~\ref{lem:decompP}, we have $D=D_1\times D_2$.
\newline
\newline
Before we go on to describe the irreducible characters of $B$ we focus on $\Irr(E,\varphi)$.

\begin{lem}\label{lem:tenchar}$ $
\begin{enumerate}
\item If $\chi_1,\chi_2\in\Irr(E,\varphi)$, then there exists $\theta\in\Irr(E,1_Z)$ such that $\chi_1.\theta=\chi_2$.
\item $\varphi$ extends in $[Z(E):Z]$ different ways to $Z(E)$. Moreover, there is a bijection
\begin{align*}
\Irr(Z(E),\varphi)&\to\Irr(E,\varphi)\\
\psi&\mapsto\chi_\psi,
\end{align*}
where $\psi\uparrow^E=\chi_\psi^{\oplus m}$ and $\chi_\psi\downarrow_{Z(E)}=\psi^{\oplus n}$, for some $m,n\in\mathbb{N}$. In particular, if $\chi\in\Irr(E,\varphi)$ and $\theta\in\Irr(E,1_Z)$, then $\chi.\theta=\chi$ if and only if $\theta\in\Irr(E,1_{Z(E)})$.
\end{enumerate}
\end{lem}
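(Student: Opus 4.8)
Throughout one should first record what the hypotheses give: since $E/Z$ is abelian, $[E,E]\le Z$, and as $Z$ is central this forces $Z\le Z(E)$ and $[E,E]\le Z(E)$, so $E/Z(E)$ is abelian as well; hence $\Irr(E,1_Z)=\Irr(E/Z)$ and $\Irr(E,1_{Z(E)})=\Irr(E/Z(E))$ consist of linear characters, the second a subset of the first, and a character $\chi\in\Irr(E)$ lies in $\Irr(E,\varphi)$ precisely when $\chi\downarrow_Z=\chi(1)\varphi$; write $\psi_\chi:=\chi\downarrow_{Z(E)}/\chi(1)\in\Irr(Z(E))$, noting $\psi_\chi\downarrow_Z=\varphi$ and, since $Z$ is cyclic and $\varphi$ faithful, that $\psi_\chi$ is faithful on $[E,E]$. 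For part (1): given $\chi_1,\chi_2\in\Irr(E,\varphi)$, the product $\overline{\chi_1}\chi_2$ is an honest character of $E$ whose restriction to $Z$ is $\chi_1(1)\chi_2(1)\cdot 1_Z$, so every irreducible constituent is trivial on $Z$ and thus a linear $\theta\in\Irr(E,1_Z)$; for any such $\theta$, $\langle\chi_1\theta,\chi_2\rangle=\langle\theta,\overline{\chi_1}\chi_2\rangle\ge 1$ by Frobenius reciprocity, and irreducibility of $\chi_1\theta$ and $\chi_2$ gives $\chi_1\theta=\chi_2$.

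The engine for part (2) is the claim that every $\chi\in\Irr(E,\varphi)$ vanishes on $E\setminus Z(E)$, has $\chi(1)^2=[E:Z(E)]$, and satisfies $\chi\mu=\chi$ for all $\mu\in\Irr(E/Z(E))$. A representation affording $\chi$ sends each central $c$ to the scalar $\psi_\chi(c)$, so for $g,h\in E$ one has $\chi(g)=\chi(h^{-1}gh)=\chi\bigl(g[g,h]\bigr)=\psi_\chi([g,h])\chi(g)$, using $[g,h]\in[E,E]\le Z(E)$; if $g\notin Z(E)$ one may choose $h$ with $[g,h]\ne 1$, whence $\psi_\chi([g,h])\ne 1$ and $\chi(g)=0$. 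Computing $\langle\chi,\chi\rangle=1$ over $Z(E)$ then gives $\chi(1)^2=[E:Z(E)]$, so $\chi\overline{\chi}$ is the class function equal to $[E:Z(E)]$ on $Z(E)$ and $0$ elsewhere, i.e. the inflation to $E$ of the regular character of $E/Z(E)$; decomposing the latter gives $\chi\overline{\chi}=\sum_{\mu\in\Irr(E/Z(E))}\mu$, hence $\langle\chi\mu,\chi\rangle=1$ and $\chi\mu=\chi$ for each such $\mu$.

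Granting the claim, part (2) is bookkeeping. The number of extensions of $\varphi$ to $Z(E)$ is $[Z(E):Z]$, since restriction $\Irr(Z(E))\to\Irr(Z)$ is a surjective homomorphism of abelian groups with kernel $\Irr(Z(E)/Z)$; write $\Irr(Z(E),\varphi)$ for this set of $[Z(E):Z]$ extensions. The map $\chi\mapsto\psi_\chi$ sends $\Irr(E,\varphi)$ to $\Irr(Z(E),\varphi)$; it is injective because if $\psi_{\chi_1}=\psi_{\chi_2}$ then by part (1) $\chi_2=\chi_1\theta$ with $\theta\in\Irr(E,1_Z)$, and restricting this equality to $Z(E)$ and cancelling forces $\theta\downarrow_{Z(E)}=1_{Z(E)}$, whence $\chi_1\theta=\chi_1$ by the claim; and it is surjective because every irreducible constituent $\chi$ of $\psi\uparrow^E$ has $\langle\chi\downarrow_{Z(E)},\psi\rangle\ne 0$, i.e. $\psi_\chi=\psi$. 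Let $\psi\mapsto\chi_\psi$ be the inverse bijection; then $\chi_\psi\downarrow_{Z(E)}=\psi^{\oplus n}$ with $n=\chi_\psi(1)$ by construction, and $\psi\uparrow^E=\chi_\psi^{\oplus m}$ for some $m$ because all its irreducible constituents lie over $\psi$ and hence equal $\chi_\psi$. The last assertion is then immediate: for $\theta\in\Irr(E,1_Z)$, if $\theta\in\Irr(E,1_{Z(E)})$ then $\chi\theta=\chi$ by the claim, while conversely $\chi\theta=\chi$ restricted to $Z(E)$ forces $\theta\downarrow_{Z(E)}=1_{Z(E)}$.

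The only real obstacle is the claim, and inside it the vanishing $\chi(g)=0$ for $g\notin Z(E)$: this is the ``fully ramified'' behaviour of the nilpotent-class-$\le 2$ group $E$ with respect to $Z(E)$, and the commutator identity $\chi(h^{-1}gh)=\psi_\chi([g,h])\chi(g)$ is exactly what makes it work once one knows $\psi_\chi$ is faithful on $[E,E]$. Everything else is routine Clifford theory for central subgroups.
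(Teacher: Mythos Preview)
Your proof is correct, and for part~(2) takes a genuinely different route from the paper. For part~(1) the two arguments are close cousins: the paper shows $\chi_1\cdot(1_Z\uparrow^E)=(\varphi\uparrow^E)^{\oplus\chi_1(1)}$ and reads off $\chi_2$ as a constituent, while you pick $\theta$ as an irreducible constituent of $\overline{\chi_1}\chi_2$ (a small quibble: the identity $\langle\chi_1\theta,\chi_2\rangle=\langle\theta,\overline{\chi_1}\chi_2\rangle$ is not Frobenius reciprocity but simply adjunction of tensoring with $\chi_1$ and with $\overline{\chi_1}$). For part~(2) the paper proceeds algebraically: it computes class sums in $KEe_\varphi$, shows they vanish for $g\notin Z(E)$ because $\varphi$ is faithful on $[g,E]\le Z$, deduces $Z(KEe_\varphi)=KZ(E)e_\varphi$, and then reads the bijection off the primitive central idempotents $e_\psi=e_{\chi_\psi}$. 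Your commutator identity $\chi(g)=\psi_\chi([g,h])\chi(g)$ is the character-theoretic shadow of that same class-sum computation, and your vanishing statement $\chi|_{E\setminus Z(E)}=0$ is equivalent to the paper's centre calculation. What you gain is a more elementary argument that avoids idempotents and makes the degree formula $\chi(1)^2=[E:Z(E)]$ and the decomposition $\chi\overline{\chi}=\sum_{\mu\in\Irr(E/Z(E))}\mu$ explicit; what the paper's version buys is that the block-theoretic interpretation (each $\psi$ corresponds to a single matrix factor of $KEe_\varphi$) is visible, which is convenient later when quantities like $\chi(e_\chi)$ are used.
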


\begin{proof}$ $
\begin{enumerate}
\item Let $\chi\in\Irr(E,\varphi)$. Then
\begin{align*}
\chi.(1_Z\uparrow^E)=(\chi\downarrow_Z.1_Z)\uparrow^E=(\varphi^{\oplus\chi(1)}.1_Z)\uparrow^E=(\varphi\uparrow^E)^{\oplus\chi(1)}.
\end{align*}
Since every element of $\Irr(E,\varphi)$ appears as a constituent of $\varphi\uparrow^E$ and $1_Z\uparrow^E$ only has constituents in $\Irr(E,1_Z)$, the claim follows.
\item Since $Z(E)$ is abelian, the first statement is clear. Now for all $g\in E$
\begin{align*}
&\sum_{h\in E/C_E(g)}h^{-1}ghe_\varphi=\sum_{h\in E/C_E(g)}gg^{-1}h^{-1}ghe_\varphi\\
=&\sum_{h\in E/C_E(g)}g\varphi([g,h])e_\varphi=g\sum_{h\in [g,E]}\varphi(h)e_\varphi.
\end{align*}
Since $\varphi$ is faithful, this is zero unless $g\in Z(E)$. In other words, $Z(KEe_\varphi)=KZ(E)e_\varphi$. So the $e_\psi$'s, as $\psi$ ranges over $\Irr(Z(E),\varphi)$, are all the character idempotents of $KEe_\varphi$. Setting $\chi_\psi\in\Irr(E,\varphi)$ to be such that $e_{\chi_\psi}=e_\psi$, the claim follows by considering the left $KEe_\varphi$-module isomorphism
\begin{align*}
KE\otimes_{Z(E)}KZ(E)e_\psi\cong KEe_\psi=KEe_{\chi_\psi}
\end{align*}
and the left $KZ(E)e_\varphi$-module isomorphism
\begin{align*}
KZ(E)\otimes_{Z(E)}KEe_{\chi_\psi}=KZ(E)\otimes_{Z(E)}KEe_\psi\cong KZ(E)e_\psi\otimes_{Z(E)}KE,
\end{align*}
for all $\psi\in\Irr(Z(E),\varphi)$.
\newline
\newline
Let $\psi\in\Irr(Z(E),\varphi)$ and $\theta\in\Irr(E,1_Z)$, then
\begin{align*}
\psi\uparrow^E.\theta=(\psi.(\theta\downarrow_{Z(E)}))\uparrow^E
\end{align*}
and $\psi.(\theta\downarrow_{Z(E)})=\psi$ if and only if $\theta\in\Irr(E,1_{Z(E)})$. The final claim now follows from the previous paragraph.
\end{enumerate}
\end{proof}

We now describe $\Irr(B)$. Let $\lambda\in\Irr(D)$ and set $E_\lambda\leq E$ to be the stabiliser of $\lambda$ in $E$. Choose $\chi\in\Irr(E_\lambda)$ and define $(\lambda,\chi)\in\Irr(D\rtimes E_\lambda)$ by
\begin{align*}
(\lambda,\chi)(gh)=\lambda(g)\chi(h),
\end{align*}
for $g\in D$ and $h\in E_\lambda$. Note that $\ker(\lambda)\rtimes E_\lambda$ is a normal subgroup of $D\rtimes E_\lambda$ and so we can uniquely extend $\lambda$ to a character of $D\rtimes E_\lambda$ with kernel $\ker(\lambda)\rtimes E_\lambda$. $(\lambda,\chi)$ is just this extension tensored with the inflation of $\chi$ to $D\rtimes E_\lambda$.

\begin{lem}\label{lem:charG}$ $
\begin{enumerate}
\item The irreducible characters of $B$ are precisely of the form $(\lambda,\chi)\uparrow^G$ for some $\lambda\in\Irr(D)$ and $\chi\in\Irr(E_\lambda,\varphi)$.
\item $(\lambda_1,\chi_1)\uparrow^G=(\lambda_2,\chi_2)\uparrow^G$ if and only if exists $h\in E$ such that $\lambda_1^h=\lambda_2$ and $\chi_1^h=\chi_2$.
\end{enumerate}
\end{lem}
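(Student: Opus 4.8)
The plan is to recognise this as a standard application of Clifford theory relative to the normal subgroup $D \lhd G$, where the key structural fact is that $B$, restricted to the block-theoretically relevant part, behaves like $\cO(D \rtimes E)e_\varphi$ with $D$ abelian. First I would recall that since $D$ is abelian, every $\lambda\in\Irr(D)$ is linear, so the set $\Irr(D)$ carries a natural $G$-action (through $E$, since $D$ acts trivially on itself by conjugation up to the abelian structure) and $\Irr(G \mid \lambda)$ — the irreducible characters of $G$ lying over $\lambda$ — is governed by the inertia group $D \rtimes E_\lambda$. The general Clifford correspondence then gives a bijection between $\Irr(G \mid \lambda)$ and $\Irr(D \rtimes E_\lambda \mid \lambda)$ via induction $\psi \mapsto \psi\uparrow^G$. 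So for part (1) the content is: (a) every $\chi\in\Irr(B)$ lies over some $\lambda\in\Irr(D)$, and one must check that the constraint "$\chi$ lies in the block $B = \cO G e_\varphi$" translates precisely into "the chosen character of $D\rtimes E_\lambda$ over $\lambda$ restricts to $Z$ as a multiple of $\varphi$", i.e. is of the form $(\lambda,\chi)$ with $\chi\in\Irr(E_\lambda,\varphi)$; and (b) conversely every such $(\lambda,\chi)\uparrow^G$ does lie in $B$. Point (b) is immediate because $Z$ is central in $G$ and $(\lambda,\chi)\uparrow^G$ restricted to $Z$ is still a multiple of $\varphi$, hence the central character agrees with that of $e_\varphi$. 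Point (a) uses the same centrality: an irreducible character of $G$ in the block $e_\varphi$ must have $Z$ acting by $\varphi$, and since $Z \leq E_\lambda$ for every $\lambda$ (as $Z$ is central in $E$, hence fixes every $\lambda$), the Clifford-corresponding character of $D\rtimes E_\lambda$ automatically lies over $\varphi$ on $Z$; that it is exactly of the product form $(\lambda,\chi)$ follows because $\ker\lambda \rtimes E_\lambda \lhd D \rtimes E_\lambda$ and $D/\ker\lambda$ is cyclic central in the quotient, so any irreducible over $\lambda$ factors as the canonical extension of $\lambda$ tensored with an inflation from $E_\lambda$.

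For part (2) I would invoke the standard uniqueness clause in Clifford's theorem: $\psi_1\uparrow^G = \psi_2\uparrow^G$ for $\psi_i\in\Irr(D\rtimes E_{\lambda_i} \mid \lambda_i)$ if and only if $(\lambda_1,\psi_1)$ and $(\lambda_2,\psi_2)$ are $G$-conjugate. Since $D$ acts trivially on its own characters and $D$ acts trivially on $\Irr(E_\lambda)$ inside the relevant extension (the action is through $E$), $G$-conjugacy reduces to $E$-conjugacy, giving exactly the condition: there exists $h\in E$ with $\lambda_1^h = \lambda_2$ and $\chi_1^h = \chi_2$. One small point to handle carefully: a priori $h$ conjugates $E_{\lambda_1}$ to $E_{\lambda_2}$ and sends the character $(\lambda_1,\chi_1)$ to $(\lambda_1^h,\chi_1^h) = (\lambda_2,\chi_1^h)$, and since the extension-plus-inflation description of $(\lambda,\chi)$ is canonical, $\chi_1^h$ is forced to be the $E_{\lambda_2}$-character whose inflation appears, i.e.\ $\chi_1^h = \chi_2$; conversely such an $h$ clearly makes the two induced characters equal.

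The main obstacle, such as it is, is bookkeeping rather than depth: one must be scrupulous about the interaction between the block idempotent $e_\varphi$ and the Clifford correspondence — specifically verifying that "lying in $B$" is equivalent to "$\chi$ restricted to $Z$ is a multiple of $\varphi$", which rests on $C_G(D) = D\times Z$ (established in the text preceding the lemma) forcing every block of $\cO G$ to be supported there, together with the fact that the faithful central character $\varphi$ of $Z$ pins down a unique block. I would also need to note explicitly that $Z \leq E_\lambda$ for all $\lambda$, so that $\Irr(E_\lambda,\varphi)$ makes sense and the induced characters genuinely exhaust $\Irr(B)$; this is where the hypothesis that $Z$ is central in $E$ is used. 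No new ideas beyond Clifford theory and the standard description of characters of a group with a normal abelian subgroup are required.
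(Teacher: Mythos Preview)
Your proposal is correct and follows essentially the same Clifford-theoretic route as the paper. The only cosmetic differences are that the paper verifies $\Irr(D\rtimes E_\lambda,\lambda)=\{(\lambda,\chi):\chi\in\Irr(E_\lambda)\}$ by a dimension count (rather than your structural argument via the quotient by $\ker\lambda$), and for part~(2) it argues by hand---restricting to $D$, then invoking uniqueness of the Clifford correspondent, then restricting to $E_\lambda$---rather than citing the general $G$-conjugacy criterion and reducing to $E$-conjugacy as you do.
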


\begin{proof}$ $
\begin{enumerate}
\item Let $\lambda\in\Irr(D)$. Certainly $(\lambda,\chi)\in\Irr(D\rtimes E_\lambda,\lambda)$, for every $\chi\in\Irr(E_\lambda)$. Moreover, by considering restrictions to $E_\lambda$, distinct $\chi$'s give distinct $(\lambda,\chi)$'s. Now,
\begin{align*}
\dim_K(K(D\rtimes E_\lambda) e_\lambda)=&|E_\lambda|=\sum_{\chi\in E_\lambda}\chi(1)^2=\sum_{\chi\in E_\lambda}(\lambda,\chi)(1)^2\\
=&\dim_K\left(\bigoplus_{\chi\in E_\lambda}KE_\lambda e_{(\lambda,\chi)}\right).
\end{align*}
Therefore,
\begin{align*}
\Irr(E_\lambda,\lambda)=\{(\lambda,\chi)|\chi\in \Irr(E_\lambda\}.
\end{align*}
It now follows from~\cite[Theorem 6.11(b)]{is76} that \begin{align*}
\Irr(G)=\{(\lambda,\chi)\uparrow^G|\lambda\in\Irr(D),\chi\in \Irr(E_\lambda\}.
\end{align*}
The claim follows by noting that $(\lambda,\chi)\uparrow^G\in\Irr(B)$ if and only if
\begin{align*}
[E:E_\lambda]\chi(e_\varphi)=(\lambda,\chi)\uparrow^G(e_\varphi)\neq 0.
\end{align*}
\item If $\lambda_1^h=\lambda_2$ and $\chi_1^h=\chi_2$, for some $h\in E$, then \begin{align*}
(\lambda_2,\chi_2)\uparrow^G=(\lambda_1^h,\chi_1^h)\uparrow^G=(\lambda_1,\chi_1)^h\uparrow^G=(\lambda_1,\chi_1)\uparrow^G.
\end{align*}
Conversely if $(\lambda_1,\chi_1)\uparrow^G=(\lambda_2,\chi_2)\uparrow^G$, then, by restricting both sides to $D$ and considering irreducible constituents, $\lambda_1$ and $\lambda_2$ must be conjugate by an element of $E$ and so we may assume that $\lambda_1=\lambda_2$. Now, $(\lambda_1,\chi_1)$ is the unique irreducible character of $D\rtimes E_{\lambda_1}$ lying between $\lambda_1$ and $(\lambda_1,\chi_1)\uparrow^G$ and the same statement holds for $(\lambda_2,\chi_2)$. Therefore, $(\lambda_1,\chi_1)=(\lambda_2,\chi_2)$ and restricting both sides to $E_{\lambda_1}=E_{\lambda_2}$ yields that $\chi_1=\chi_2$.
\end{enumerate}
\end{proof}

For a finite group $H$, we write $H_{p'}$ for the set of $p$-regular elements of $H$.

\begin{lem}\label{lem:brauer}$ $
\begin{enumerate}
\item There is a bijection
\begin{align*}
\Irr(E,\varphi)&\to\IBr(B)\\
\chi&\mapsto\psi_\chi,
\end{align*}
where $\psi_\chi(g)=\Inf_E^G(\chi)(g)$, for all $g\in G_{p'}$ and $\Inf_E^G$ denotes the inflation of a character from $E\cong G/(D\times Z)$ to $G$.
\item Through this bijection we can identify the decomposition map \begin{align*}
\mathbb{Z}\Irr(B)\to\mathbb{Z}\IBr(B)
\end{align*}
with the restriction map
\begin{align*}
\mathbb{Z}\Irr(B)\to\mathbb{Z}\Irr(E,\varphi).
\end{align*}
\end{enumerate}
\end{lem}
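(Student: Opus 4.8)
Part (1) I would deduce from the standard fact that, since $D$ is a normal $p$-subgroup of $G$, inflation along $G\twoheadrightarrow G/D\cong E$ induces a bijection $\IBr(E)\xrightarrow{\sim}\IBr(G)$: the augmentation ideal of $kD$ generates an ideal of $kG$ contained in $J(kG)$ with quotient $kG/(\cdot)\cong kE$, and $D=O_p(G)$ lies in the kernel of every simple $kG$-module (cf.\ Nagao--Tsushima or Navarro). As $E$ is a $p'$-group, $kE$ is semisimple, so $\IBr(E)$ is canonically $\Irr(E)$, each $\chi$ reducing mod $p$ to a simple $kE$-module with Brauer character $\chi$ itself (all of $E$ being $p$-regular); the Brauer character of the inflation to $G$ of that module is then exactly $\Inf_E^G(\chi)$ restricted to $G_{p'}$, which is the asserted formula for $\psi_\chi$. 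So the task reduces to pinning down which $\chi$ land in the block $B$.

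Here I would use that the action of $E$ on $D$ factors through $L=E/Z$, so $Z$ is central in $G$; hence $\overline{e_\varphi}$ is central in $kG$ and (as recorded in the set-up of this section) $e_\varphi$ is a block idempotent with defect group $D$. A simple $kG$-module $S$ belongs to $B$ iff $\overline{e_\varphi}$ acts as the identity on $S$, iff $Z$ acts on $S$ through $\overline{\varphi}$. Transporting this condition through the inflation bijection, and using that reduction mod $p$ is a bijection $\Irr(Z)\to\IBr(Z)$ sending $\varphi\mapsto\overline{\varphi}$ (again because $Z$ is a $p'$-group), this says precisely that the corresponding $\chi\in\Irr(E)$ lies over $\varphi$, i.e.\ $\chi\in\Irr(E,\varphi)$; since distinct $\chi$ give distinct simple $kE$-modules hence distinct $\psi_\chi$, this is a bijection, proving (1).

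For (2), the one auxiliary point is that $\psi_\chi\downarrow_E=\chi$, since $\Inf_E^G(\chi)$ restricted to $E$ equals $\chi$ (as $D\cap E=1$). Then for $\rho\in\Irr(B)$ I would write, by definition of the decomposition numbers, $\rho\downarrow_{G_{p'}}=\sum_{\chi\in\Irr(E,\varphi)}d_{\rho\chi}\,\psi_\chi$ as class functions on $G_{p'}$, restrict this identity to the $p$-regular subset $E$ to obtain $\rho\downarrow_E=\sum_{\chi\in\Irr(E,\varphi)}d_{\rho\chi}\,\chi$, and finally invoke linear independence of $\Irr(E)$ to conclude $d_{\rho\chi}=\langle\rho\downarrow_E,\chi\rangle_E$ for every $\chi$. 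This also shows $\rho\downarrow_E$ involves only characters in $\Irr(E,\varphi)$ (alternatively immediate from $Z$ being central in $G$ and acting on $\rho$ through $\varphi$), so restriction is genuinely a map $\mathbb{Z}\Irr(B)\to\mathbb{Z}\Irr(E,\varphi)$ and is identified with the decomposition map as claimed.

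I do not expect a genuine obstacle: the lemma is a dictionary between the block $B$ of $\cO G$ and the $p'$-group $E$, and the only real inputs are the inflation bijection for a normal $p$-subgroup and the triviality of decomposition matrices for $p'$-groups. The mild care required is purely bookkeeping — keeping ordinary characters, their mod-$p$ reductions, and Brauer characters distinct — which the $p'$-hypotheses on $E$ and $Z$ render routine.
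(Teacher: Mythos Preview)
Your proposal is correct and follows essentially the same approach as the paper: both use that $D$ acts trivially on every simple $kG$-module (so $\IBr(G)\leftrightarrow\Irr(E)$ via inflation), then identify the characters landing in $B$ via the condition that $Z$ acts through $\varphi$; for part (2), both reduce to the observation that $\psi_\chi\downarrow_E=\chi$ so Brauer characters are determined by their restriction to $E$. Your write-up is simply more explicit than the paper's terse version.
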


\begin{proof}$ $
\begin{enumerate}
\item Every simple $kG$-module must have $D$ in its kernel and the decomposition map is a bijection on $p'$-groups, so we can associate a unique irreducible character of $E$ to each irreducible Brauer character of $G$. The first part then follows from the fact that an irreducible Brauer character $\psi$ of $G$ is in $B$ if and only if its restriction to $Z$ is $\varphi^{\oplus \psi(1)}$.
\item Every $\chi\in\Irr(B)$ restricted to $Z$ is $\varphi^{\oplus \chi(1)}$ and so the restriction map is well-defined. The claim now follows by noting that irreducible Brauer characters of $B$ are completely determined by their restriction to $E$.
\end{enumerate}
\end{proof}

\begin{cor}\label{cor:sigma}
Given any Morita auto-equivalence of $B$ with corresponding permutation $\sigma$ of $\Irr(B)$, there exists a unique permutation $\sigma_{\Br}$ of $\Irr(E,\varphi)$ that, when we extend to a $\mathbb{Z}$-linear endomorphism of $\mathbb{Z}\Irr(E,\varphi)$, satisfies
\begin{align*}
\sigma(\chi)\downarrow_E=\sigma_{\Br}(\chi\downarrow_E),
\end{align*}
for all $\chi\in\Irr(B)$.
\end{cor}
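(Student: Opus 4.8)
The plan is to obtain $\sigma_{\Br}$ by transporting the action of the Morita equivalence on Brauer characters through Lemma~\ref{lem:brauer}, and then to read off the displayed relation from the standard fact that a Morita equivalence respects decomposition numbers; uniqueness will follow from the surjectivity of the restriction map.

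Concretely, I would write the given Morita auto-equivalence as $M\otimes_B-$ for a $B$-$B$-bimodule $M$ which is finitely generated and projective (hence $\cO$-free) on each side. Extending scalars to $K$ gives the self-equivalence of $KB$-mod whose effect on simple modules is the permutation $\sigma$ of $\Irr(B)$, and reducing modulo $p$ gives a self-equivalence of $kB$-mod whose effect on simple modules is a permutation $\tau$ of $\IBr(B)$. These are compatible with the decomposition map $d_B\colon\mathbb{Z}\Irr(B)\to\mathbb{Z}\IBr(B)$, that is, $d_B\circ\sigma=\tau\circ d_B$; this is standard, and comes down to the observation that, since $M$ is $\cO$-free and projective as a right $B$-module, the reduction-mod-$p$ functor commutes with $M\otimes_B-$ on $\cO$-free $B$-modules.

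Next I would invoke Lemma~\ref{lem:brauer}. By part (1) the rule $\chi\mapsto\psi_\chi$ is a bijection $\Irr(E,\varphi)\to\IBr(B)$; let $\sigma_{\Br}$ be the permutation of $\Irr(E,\varphi)$ corresponding to $\tau$ under this bijection, extended $\mathbb{Z}$-linearly to $\mathbb{Z}\Irr(E,\varphi)$. By part (2), the same bijection identifies $d_B$ with the restriction map $\mathbb{Z}\Irr(B)\to\mathbb{Z}\Irr(E,\varphi)$, $\chi\mapsto\chi\downarrow_E$. Transporting $d_B\circ\sigma=\tau\circ d_B$ through these identifications yields
\begin{align*}
\sigma(\chi)\downarrow_E=\sigma_{\Br}(\chi\downarrow_E)\qquad\text{for all }\chi\in\Irr(B),
\end{align*}
which gives existence.

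For uniqueness, I would note that the restriction map $\mathbb{Z}\Irr(B)\to\mathbb{Z}\Irr(E,\varphi)$ is surjective: for each $\chi\in\Irr(E,\varphi)$ we have $E_{1_D}=E$, so $(1_D,\chi)\uparrow^G=\Inf_E^G\chi$ lies in $\Irr(B)$ by Lemma~\ref{lem:charG}(1) and restricts to $\chi$ on $E$; hence $\Irr(E,\varphi)$ itself lies in the image. Any $\mathbb{Z}$-linear endomorphism of $\mathbb{Z}\Irr(E,\varphi)$ satisfying the displayed identity is thus determined on a generating set, so $\sigma_{\Br}$ is unique (and, being the transport of the permutation $\tau$, is itself a permutation). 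I do not expect a genuine obstacle here: the argument rests entirely on the standard compatibility of Morita equivalences with decomposition numbers and on Lemma~\ref{lem:brauer}, so the only care needed is in tracking the various identifications and in checking that the permutations $\sigma$ of $\Irr(B)$ and $\tau$ of $\IBr(B)$ are induced by one and the same bimodule $M$.
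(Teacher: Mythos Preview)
Your proposal is correct and follows essentially the same approach as the paper's proof, just with more detail: the paper simply notes that existence is the statement that a Morita auto-equivalence permutes $\IBr(B)$ (identified with $\Irr(E,\varphi)$ via Lemma~\ref{lem:brauer}), and that uniqueness follows because every element of $\Irr(E,\varphi)$ inflates to an element of $\Irr(B)$. Your explicit verification that $d_B\circ\sigma=\tau\circ d_B$ and your identification of the inflation $(1_D,\chi)$ as a preimage under restriction are exactly the details underlying these two sentences.
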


\begin{proof}
The existence of such a $\sigma_{\Br}$ is just the statement that any Morita auto-equivalence permutes $\IBr(B)$, which we identify with $\Irr(E,\varphi)$ via Lemma~\ref{lem:brauer}. The uniqueness follows from the fact that every element of $\Irr(E,\varphi)$ inflates to an element of $\Irr(B)$.
\end{proof}

By Lemma~\ref{lem:decompP} we may decompose $D=[D,Z(E)]\times C_D(Z(E))$.

\begin{lem}\label{lem:ZEWeiss}
The subset of irreducible characters of $B$ that reduce to some number of copies of the same irreducible Brauer character is $\Irr(B,1_{[D,Z(E)]})$.
\end{lem}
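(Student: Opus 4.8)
The plan is to combine the parametrisation of $\Irr(B)$ in Lemma~\ref{lem:charG} with the identification of the decomposition map in Lemma~\ref{lem:brauer}. Write a typical $\eta\in\Irr(B)$ as $\eta=(\lambda,\chi)\uparrow^G$ with $\lambda\in\Irr(D)$ and $\chi\in\Irr(E_\lambda,\varphi)$. By Lemma~\ref{lem:brauer}(2), $\eta$ reduces to a multiple of a single irreducible Brauer character precisely when $\eta\downarrow_E$ is a multiple of a single element of $\Irr(E,\varphi)$. Since $G=(D\rtimes E_\lambda)E$ and $D\lhd G$, standard Mackey computations give $\eta\downarrow_E=\chi\uparrow_{E_\lambda}^E$ (using $(\lambda,\chi)\downarrow_{E_\lambda}=\chi$, as $\lambda$ is linear) and $\eta\downarrow_D=\chi(1)\sum_{g\in E_\lambda\backslash E}\lambda^g$. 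From the second formula and Frobenius reciprocity, $\eta\in\Irr(B,1_{[D,Z(E)]})$ if and only if $[D,Z(E)]\leq\ker(\lambda^g)$ for some $g$; as $[D,Z(E)]$ is $E$-stable this says $[D,Z(E)]\leq\ker\lambda$, and since $z^{-1}dz=d[d,z]$ for $d\in D$, $z\in Z(E)$, this is in turn equivalent to $Z(E)\leq E_\lambda$. So the whole lemma reduces to the claim: for $\chi\in\Irr(E_\lambda,\varphi)$, the character $\chi\uparrow_{E_\lambda}^E$ is a multiple of a single irreducible character of $E$ if and only if $Z(E)\leq E_\lambda$.

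For the `if' direction I would argue as follows. If $Z(E)\leq E_\lambda$ then $Z(E)\leq Z(E_\lambda)$, so $\chi\downarrow_{Z(E)}=\mu^{\oplus\chi(1)}$ for some $\mu\in\Irr(Z(E),\varphi)$; hence $\chi$ is a constituent of $\mu\uparrow^{E_\lambda}$, so $\chi\uparrow_{E_\lambda}^E$ is a constituent of $\mu\uparrow^E$, and the latter equals $\chi_\mu^{\oplus m}$ for some $m$ by Lemma~\ref{lem:tenchar}(2). Thus $\chi\uparrow_{E_\lambda}^E$ is a multiple of $\chi_\mu$.

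For the `only if' direction I would prove the contrapositive. Suppose $Z(E)\not\leq E_\lambda$. The action of $E$ on $D$ factors through $L=E/Z$, so $Z\leq E_\lambda$, and $E_\lambda\lhd E$ with $E/E_\lambda$ abelian. Since $Z(E)E_\lambda/E_\lambda$ is a non-trivial subgroup of $E/E_\lambda$, I can choose a linear character $\theta$ of $E$ that is trivial on $E_\lambda$ but not on $Z(E)$; then $\theta\in\Irr(E,1_Z)$ but $\theta\notin\Irr(E,1_{Z(E)})$. For any irreducible constituent $\zeta$ of $\chi\uparrow_{E_\lambda}^E$ I note $\zeta\in\Irr(E,\varphi)$ (restrict to $Z$); since $\theta\downarrow_{E_\lambda}=1_{E_\lambda}$, Frobenius reciprocity shows $\zeta\otimes\theta$ is also a constituent of $\chi\uparrow_{E_\lambda}^E$, while $\zeta\otimes\theta\neq\zeta$ by the last part of Lemma~\ref{lem:tenchar}(2). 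Hence $\chi\uparrow_{E_\lambda}^E$ has at least two distinct irreducible constituents.

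The heart of the argument is this last equivalence, and in particular the `only if' direction --- exhibiting two distinct Brauer characters in the reduction of $\eta$ whenever $[D,Z(E)]\not\leq\ker\lambda$. The other point requiring care is the bookkeeping in the first paragraph showing that the property `$\eta$ reduces to a single Brauer character' really depends only on $\lambda$ (and not on $\chi$), which is exactly what the `if' direction secures.
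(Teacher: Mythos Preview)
Your proof is correct and follows essentially the same route as the paper: both reduce the statement to the equivalence ``$\chi\uparrow_{E_\lambda}^E$ is a multiple of a single irreducible'' $\Leftrightarrow$ ``$Z(E)\leq E_\lambda$'', after first showing (via the commutator identity, or in the paper via the decomposition $D=[D,Z(E)]\times C_D(Z(E))$ together with Lemma~\ref{lem:actirr}) that $\eta\in\Irr(B,1_{[D,Z(E)]})$ is equivalent to $Z(E)\leq E_\lambda$. The only real difference is in how that final equivalence is established: the paper restricts $\chi\uparrow^E$ back down to $Z(E)$ via Mackey, obtaining a multiple of $(\chi\downarrow_{Z(E)\cap E_\lambda})\uparrow^{Z(E)}$, and then uses abelianness of $Z(E)$ to see this is homogeneous iff $Z(E)\leq E_\lambda$; you instead treat the two directions separately, using Lemma~\ref{lem:tenchar}(2) directly for ``if'' and constructing an explicit twisting character $\theta\in\Irr(E,1_{E_\lambda})\setminus\Irr(E,1_{Z(E)})$ for ``only if''. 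Both arguments are short and rely on the same key input (Lemma~\ref{lem:tenchar}(2)), so this is a variation in execution rather than in strategy.
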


\begin{proof}
Let $\lambda\in\Irr(D)$ and $\chi\in\Irr(E,\varphi)$. Since $[D,Z(E)]$ is normal in $G$, $(\lambda,\chi)\uparrow^G\in\Irr(B,1_{[D,Z(E)]})$ if and only if $\lambda\in\Irr(D,1_{[D,Z(E)]})$. As $D=[D,Z(E)]\times C_D(Z(E))$, Lemma~\ref{lem:actirr} implies that $\lambda\in\Irr(D,1_{[D,Z(E)]})$ if and only if $Z(E)\leq E_\lambda$.
\newline
\newline
Part (2) of Lemma~\ref{lem:brauer} gives that $(\lambda,\chi)\uparrow^G$ reduces to some number of copies of the same irreducible Brauer character if and only if $(\lambda,\chi)\uparrow^G\downarrow_E=\chi\uparrow^E$ is the sum of some number of the same irreducible character of $E$. By part (2) of Lemma~\ref{lem:tenchar}, this happens if and only if $\chi\uparrow^E\downarrow_{Z(E)}$ is the sum of some number of the same irreducible character of $Z(E)$. It follows from the Mackey decomposition that $\chi\uparrow^E\downarrow_{Z(E)}$ is just multiple copies of $(\chi\downarrow_{Z(E)\cap E_\lambda})\uparrow^{Z(E)}$. Finally, since $Z(E)$ is abelian $(\chi\downarrow_{Z(E)\cap E_\lambda})\uparrow^{Z(E)}$ is the sum of some number of the same irreducible character of $Z(E)$ if and only if $Z(E)\leq E_\lambda$.
\end{proof}

Before proceeding we note that if $\omega\in\mathbb{C}$ is a primitive $(p^n)^{\nth}$-root of unity, for some $n\in\mathbb{N}$, then
\begin{align*}
\prod_{i=0,p\nmid i}^{p^n-1}(X-\omega^i)=(X^{p^n}-1)/(X^{p^{n-1}}-1)=\sum_{i=0}^{p-1}X^{ip^{n-1}}\in\mathbb{Q}[X].
\end{align*}
In particular,
\begin{align*}
\prod_{i=0,p\nmid i}^{p^n-1}(1-\omega^i)=p
\end{align*}
and so $1-\omega\in p\cO$ if and only if $p=2$ and $\omega=-1$.
\newline
\newline
The final lemma of this section is rather technical and will not be used until $\S$\ref{sec:one}. We set $\cO_p:=\cO/p\cO$ and $\cO_I:=\cO/I$, where $I:=\cJ.p\cO$ for $\cJ$ the unique maximal ideal of $\cO$. Recall from $\S$\ref{sec:abab} that we denote by $\Phi(P)$ the Frattini subgroup of $P$, for a finite abelian $p$-group $P$.

\begin{lem}\label{lem:Op}
Let $\lambda\in\Irr(D)$ and $\chi\in \Irr(E_\lambda,\varphi)$.
\begin{enumerate}
\item If $p$ is odd, then there exists an $\cO$-free $\cO G$-module $V$ affording $(\lambda,\chi)\uparrow^G$ with $x$ acting as the identity on $\cO_p\otimes_{\cO}V$, for all $x\in D_1$, if and only if $\lambda\downarrow_{D_1}=1_{D_1}$.
\item Let $p=2$.
\begin{enumerate}
\item There exists an $\cO$-free $\cO G$-module $V$ affording $(\lambda,\chi)$ with $x$ acting as the identity on $\cO_2\otimes_{\cO}V$, for all $x\in D_1$, if and only if $\lambda\downarrow_{\Phi(D_1)}=1_{\Phi(D_1)}$.
\item There exists an $\cO$-free $\cO G$-module $V$ affording $(\lambda,\chi)\uparrow^G$ with $x$ acting as the identity on $\cO_I\otimes_{\cO}V$, for all $x\in D_1$, if and only if $\lambda\downarrow_{D_1}=1_{D_1}$.
\end{enumerate}
\end{enumerate}
\end{lem}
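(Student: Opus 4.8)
The module $V$ affording $(\lambda,\chi)\uparrow^G$ is $\Ind_{D\rtimes E_\lambda}^G W$, where $W$ is the $\cO(D\rtimes E_\lambda)$-lattice affording $(\lambda,\chi)$. Since induction commutes with $-\otimes_\cO\cO_p$ (resp. $\cO_I$), and since the $D_1$-action is being tested on a module induced from a subgroup containing $D$, the condition ``$x$ acts as the identity on $\cO_p\otimes_\cO V$ for all $x\in D_1$'' translates into a condition on $W$: namely, for each $h\in E$, the conjugate lattice ${}^hW$ (affording $({}^h\lambda,{}^h\chi)$ as a module for $D\rtimes E_{{}^h\lambda}$) must have all of ${}^hD_1 = D_1$ acting trivially mod $p$ — equivalently, since $W$ restricted to $D$ is $\lambda^{\oplus\chi(1)}$, the scalars $\lambda(x)$ for $x\in D_1$ must all be $\equiv 1$ in the relevant quotient of $\cO$. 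So the plan is: reduce the statement for $(\lambda,\chi)\uparrow^G$ to the statement for $(\lambda,\chi)$ (part 2(a) vs 2(b) will then fall out of the difference between $\cO_2$ and $\cO_I$), and then analyse when the linear character $\lambda$ of the abelian $p$-group $D_1$ satisfies $\lambda(x)\equiv 1$ mod the appropriate ideal.

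The key arithmetic input is the computation just before the lemma: if $\omega$ is a primitive $(p^n)^\text{th}$ root of unity then $1-\omega\in p\cO$ if and only if $p=2$ and $\omega=-1$, while in all cases $1-\omega\in\cJ$. Writing $\lambda(x)$ as a $p$-power root of unity for $x\in D_1$: in the odd case, $\lambda(x)\equiv 1\bmod p\cO$ forces $\lambda(x)=1$, so the condition $x$ acts trivially mod $p$ for all $x\in D_1$ is exactly $\lambda\downarrow_{D_1}=1_{D_1}$ — this gives part 1. In the $p=2$ case, $\lambda(x)\equiv 1\bmod 2\cO$ holds iff $\lambda(x)\in\{1,-1\}$, i.e. iff $\lambda(x^2)=1$; ranging over $x\in D_1$ this says $\lambda\downarrow_{\Phi(D_1)}=1_{\Phi(D_1)}$ (using $\Phi(D_1)=D_1^2$ for a $2$-group), which is part 2(a). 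For part 2(b), the ideal $I=\cJ\cdot 2\cO$ is strictly smaller, and $\lambda(x)\equiv 1\bmod I$ with $\lambda(x)$ a $2$-power root of unity forces $\lambda(x)=1$ (since $-1\not\equiv 1\bmod I$ as $2\notin I$), recovering $\lambda\downarrow_{D_1}=1_{D_1}$. The reason part 2(b) is stated for the induced module $(\lambda,\chi)\uparrow^G$ while 2(a) is for $(\lambda,\chi)$ itself is presumably that for the finer quotient $\cO_I$ one wants the genuine $G$-module; in 2(a) the trace argument from induction could introduce a factor that collapses mod $2$ but not mod $I$, so one must stay at the level of $D\rtimes E_\lambda$.

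For each direction I would argue as follows. For ``only if'': if such a $V$ exists, restrict to $D$ (or to $D_1$), note $V\downarrow_D$ is a sum of $G$-conjugates of $\lambda$, each of which must act trivially on $\cO_p\otimes_\cO V$ (resp. $\cO_I$); since $D_1=[D,E]$ is $E$-stable, the conjugates of $\lambda$ all agree with $\lambda$ on... no — rather, one of the constituents is $\lambda$ itself, so $\lambda(x)\equiv 1$ in the quotient for all $x\in D_1$, and the arithmetic above converts this to the stated kernel condition. For ``if'': given the kernel condition on $\lambda\downarrow_{D_1}$ (resp. $\lambda\downarrow_{\Phi(D_1)}$), take $W$ to be the standard lattice affording $(\lambda,\chi)$ — concretely the extension of $\lambda$ to $D\rtimes E_\lambda$ tensored with an $\cO$-form of the inflation of $\chi$ — and check directly that every $x\in D_1$ acts by the scalar $\lambda(x)$, which is $\equiv 1$ in the relevant quotient by hypothesis and the arithmetic fact; then induce up for 1 and 2(b).

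\smallskip

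The main obstacle I anticipate is part 2(a) versus 2(b): making precise why the induced module $(\lambda,\chi)\uparrow^G$ can be arranged to have $D_1$ acting trivially mod $I$ exactly when $\lambda\downarrow_{D_1}$ is trivial, but at the non-induced level only the weaker mod-$2$ statement with $\Phi(D_1)$ survives. One must track carefully how induction interacts with reduction mod $I$ — in particular that a $D_1$-fixed vector mod $I$ in $\Ind W$ must come from compatible $D_1$-fixed structure on all the conjugate summands ${}^hW$, and that $D_1$ being $E$-invariant means the constraint is the same on each summand, so there is no room for cancellation. I would handle this by choosing an explicit $\cO$-basis of $\Ind_{D\rtimes E_\lambda}^G W$ indexed by coset representatives of $E_\lambda$ in $E$ and writing out the $D_1$-action as a block-diagonal matrix of scalars $\{{}^h\lambda(x) : h\}$, reducing everything to the one-dimensional arithmetic already in hand.
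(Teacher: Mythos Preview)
Your approach is correct and essentially identical to the paper's: both reduce to the arithmetic fact (established just before the lemma) about when $1-\omega\in p\cO$ or $I$ for a $p$-power root of unity $\omega$, and both handle the ``if'' direction by taking a lattice $U$ affording $(\lambda,\chi)$, observing that each $x\in D_1$ acts on it by the scalar $\lambda(x)$, and then inducing. Your worry about part 2(a) versus 2(b) is unnecessary: the induction step works uniformly in all three cases (since $D_1$ is $E$-stable, every conjugate ${}^h\lambda$ satisfies the same congruence on $D_1$), and the paper's own proof treats 2(a) by saying ``the argument is identical'' --- the appearance of $(\lambda,\chi)$ rather than $(\lambda,\chi)\uparrow^G$ in the statement of 2(a) is either a typo or immaterial, not a genuine obstruction requiring a separate trace or cancellation argument.
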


\begin{proof}$ $
\begin{enumerate}
\item If such a $V$ exists then certainly $\lambda(x)\equiv 1\mod p$ for all $x\in D_1$. However, $1-\omega\in p\cO$ for some $p^{\nth}$-power root of unity $\omega\in\cO$ if and only if $\omega=1$. Therefore $\lambda=1_{D_1}$. Conversely suppose $\lambda=1_{D_1}$ and let $U$ be an $\cO$-free $\cO(D\rtimes E_\lambda)$-module affording $(\lambda,\chi)$. Certainly $x$ acts as the identity on $\cO_p\otimes_{\cO}U$, for all $x\in D_1$ and therefore setting $V:=U\uparrow^G$ proves the claim.
\item
\begin{enumerate}
\item The argument is identical for the $p=2$ case except that $1-\omega\in 2\cO$ for some $2^{\nd}$-power root of unity $\omega\in\cO$ if and only if $\omega=\pm1$. Therefore, such a $V$ exists if and only if $\lambda(x)=\pm1$ for all $x\in D_1$. In other words, if and only if $\lambda\downarrow_{\Phi(D_1)}=1_{\Phi(D_1)}$.
\item Again the result follows from the fact that $1-\omega\in I$ for some $2^{\nd}$-power root of unity $\omega\in\cO$ if and only if $\omega=1$ (now $1-(-1)\notin I$).
\end{enumerate}
\end{enumerate}
\end{proof}

\section{Perfect isometries}\label{sec:perfisom}

Let $H$ be a finite group and $b$ a block of $\cO H$. We write $\prj(b)$ for the set of characters of projective indecomposable $b$-modules.

\begin{defi}[\cite{br90}]
We denote by $\CF(H,b,K)$ the $K$-subspace of class functions on $H$ spanned by $\Irr(b)$, by $\CF(H,b,\cO)$ the $\cO$-submodule
\begin{align*}
\{\chi\in \CF(H,b,K):\chi(h)\in\cO\text{ for all }h\in H\}
\end{align*}
of $\CF(H,b,K)$ and by $\CF_{p'}(H,b,\cO)$ the $\mathcal{O}$-submodule
\begin{align*}
\{\phi\in \CF(H,b,\cO):\chi(h)=0\text{ for all }h\in H\backslash H_{p'}\}
\end{align*}
of $\CF(H,b,\mathcal{O})$.
\newline
\newline
Let $H'$ be another finite group and $b'$ a block of $\cO H'$. A perfect isometry between $b$ and $b'$ is an isometry
\begin{align*}
I:\mathbb{Z}\Irr(b)\to\mathbb{Z}\Irr(b'),
\end{align*}
such that
\begin{align*}
I_K:=K\otimes_{\mathbb{Z}}I:K\Irr(b)\to K\Irr(b'),
\end{align*}
induces an $\cO$-module isomorphism between $\CF(H,b,\cO)$ and $\CF(H',b',\cO)$ and also between $\CF_{p'}(H,b,\cO)$ and $\CF_{p'}(H',b',\cO)$. (Note that by an isometry we mean an isometry with respect to the usual inner products on $\mathbb{Z}\Irr(b)$ and $\mathbb{Z}\Irr(b')$. In particular, for all $\chi\in\Irr(b)$, $I(\chi)=\pm\chi'$ for some $\chi'\in\Irr(b')$).
\end{defi}

\begin{rem}
An alternative way of phrasing the condition that $I_K$ induces an isomorphism between $\CF_{p'}(H,b,\cO)$ and $\CF_{p'}(H',b',\cO)$ is that $I$ induces an isomorphism $\mathbb{Z}\prj (b) \cong \mathbb{Z}\prj (b')$.
\end{rem}

We need a small lemma before continuing.

\begin{lem}\label{lem:NO}
Let $H$ be a finite group, $N$ a normal subgroup and $\chi\in\mathbb{Z}\Irr(N)$. Then $\chi\uparrow^H(g)\in[\Stab_H(\chi):N]\cO$, for all $g\in H$.
\end{lem}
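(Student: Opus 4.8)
The plan is to compute $\chi\uparrow^H(g)$ essentially by hand, using induction in stages through $T:=\Stab_H(\chi)$. First note that the conjugation action of $H$ on $\mathbb{Z}\Irr(N)$ has $N$ in its kernel (inner automorphisms of $N$ fix every irreducible character of $N$), so $N\leq T$ and $[T:N]$ is a well-defined positive integer. Throughout I will use the coset form of the Frobenius induction formula: for a subgroup $U\leq H$, a class function $\psi$ on $U$, and $g\in H$,
\begin{align*}
\psi\uparrow^H(g)=\sum_{\substack{xU\in H/U\\ x^{-1}gx\in U}}\psi(x^{-1}gx),
\end{align*}
the sum running over a transversal of $U$ in $H$; the point is that this is a plain sum of values of $\psi$, with no denominators, so if $\psi$ takes all its values in a given $\cO$-submodule of $K$ then so does $\psi\uparrow^H$.

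I would first evaluate $\chi\uparrow^T$. Since $N\lhd T$, the condition $x^{-1}tx\in N$ for $t\in T$ forces $t\in N$, so $\chi\uparrow^T$ vanishes on $T\backslash N$; and for $t\in N$ the sum runs over all of $T/N$, with every summand equal to $\chi(t)$ because $\chi$ is $T$-stable. Hence $\chi\uparrow^T(t)=[T:N]\chi(t)$ for $t\in N$ and $\chi\uparrow^T(t)=0$ otherwise. As $\chi\in\mathbb{Z}\Irr(N)$ is a virtual character of $N$ and $K$ contains all $|N|^{\nth}$ roots of unity, $\chi(t)\in\cO$ for every $t\in N$; therefore $\chi\uparrow^T$ takes all its values in the $\cO$-submodule $[T:N]\cO$.

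Finally, applying the coset formula to $T\leq H$ and using induction in stages,
\begin{align*}
\chi\uparrow^H(g)=(\chi\uparrow^T)\uparrow^H(g)=\sum_{\substack{xT\in H/T\\ x^{-1}gx\in T}}(\chi\uparrow^T)(x^{-1}gx),
\end{align*}
which is a finite sum of elements of $[T:N]\cO$ and hence lies in $[T:N]\cO$. There is essentially no obstacle in this argument; the only two points worth stating carefully are that the coset form of Frobenius reciprocity introduces no denominators (so that membership in $[T:N]\cO$ propagates under induction) and that the values of a virtual character of $N$ really do lie in $\cO$ under the standing assumption on $K$.
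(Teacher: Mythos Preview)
Your proof is correct and essentially follows the same approach as the paper. The paper argues more tersely: $\chi\uparrow^H$ vanishes outside $N$, and $\chi\uparrow^H\downarrow_N$ is the sum of the $H$-conjugates of $\chi$ each with multiplicity $[\Stab_H(\chi):N]$; your induction-in-stages through $T=\Stab_H(\chi)$ is just an explicit unpacking of this same orbit--stabiliser observation.
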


\begin{proof}
$\chi^H$ is zero outside of $N$ so we need only prove that $\chi\uparrow^H(g)\in[\Stab_H(\chi):N]\cO$, for all $g\in N$. This is now clear, as $\chi\uparrow^H\downarrow_N$ is just the sum of the $H$-conjugates of $\chi$, each appearing with multiplicity $[\Stab_H(\chi):N]$.
\end{proof}

The following lemma is the main result of this section.

\begin{lem}\label{lem:D2kernel}
Let $\sigma$ be a permutation of $\Irr(B)$ induced by a Morita auto-equivalence of $B$. Identifying $(D_1\rtimes E)\times D_2$ with $D\rtimes E$, there exists $\theta\in\Irr(D_2)$ such that
\begin{align}\label{algn:ker}
\sigma(1_D,\chi)=\psi_\chi\otimes\theta,
\end{align}
for all $\chi\in\Irr(E,\varphi)$, where $\psi_\chi\in\Irr(D_1\rtimes E,\varphi)$.
\end{lem}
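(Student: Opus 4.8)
First I would note that a Morita equivalence between blocks is given by a bimodule that is projective as a one‑sided module on each side, and therefore induces a perfect isometry; thus $\sigma$ underlies a perfect self‑isometry $I$ of $B$ with $I(\chi)=\sigma(\chi)$ for all $\chi\in\Irr(B)$, and I would use both defining conditions of $I$ together with Lemma~\ref{lem:brauer} and Corollary~\ref{cor:sigma}. Since $D_2=C_D(E)$ is normalised by the abelian group $D$ and centralised by $E$, we have $D_2\le Z(G)$; as $D=D_1\times D_2$ we may identify $G=(D_1\rtimes E)\times D_2$, giving $B=\bar B\otimes_{\cO}\cO D_2$ with $\bar B=\cO(D_1\rtimes E)e_\varphi$ and $\Irr(B)=\{\psi\otimes\mu:\psi\in\Irr(D_1\rtimes E,\varphi),\ \mu\in\Irr(D_2)\}$. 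Under this identification $(1_D,\chi)=(1_{D_1},\chi)\otimes 1_{D_2}$, so each $\sigma(1_D,\chi)$ can be written uniquely as $\psi_\chi\otimes\theta_\chi$ with $\psi_\chi\in\Irr(D_1\rtimes E,\varphi)$ and $\theta_\chi\in\Irr(D_2)$, where $\theta_\chi$ is just the central character of $\sigma(1_D,\chi)$ restricted to $D_2\le Z(G)$, i.e.\ $\theta_\chi(z)=\sigma(1_D,\chi)(z)/\sigma(1_D,\chi)(1)$ for $z\in D_2$. Hence the statement reduces to proving that $\theta_\chi$ does not depend on $\chi$.

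Next I would set up the comparison between different $\chi$. By Corollary~\ref{cor:sigma} the character $\sigma(1_D,\chi)$ reduces to a single irreducible Brauer character, so by Lemma~\ref{lem:ZEWeiss} it lies in $\Irr(B,1_{[D,Z(E)]})$; by Lemma~\ref{lem:charG} we may therefore write $\sigma(1_D,\chi)=(\lambda_\chi,\chi'_\chi)\uparrow^G$ with $\lambda_\chi\in\Irr(D,1_{[D,Z(E)]})$ and $\chi'_\chi\in\Irr(E_{\lambda_\chi},\varphi)$, and a short computation — using that $E$ acts trivially on $D_2$, so $E_{\lambda_\chi}=E_{\lambda_\chi|_{D_1}}$ and the induction takes place only in the $D_1\rtimes E$ factor — gives $\theta_\chi=\lambda_\chi|_{D_2}$. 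To link the various $\chi$, I would invoke Lemma~\ref{lem:tenchar}(1): given $\chi_0,\chi\in\Irr(E,\varphi)$ there is $\nu\in\Irr(E,1_Z)=\Irr(L)$ with $\chi=\chi_0\cdot\nu$, and $\nu$ is linear because $L$ is abelian; thus $(1_D,\chi)=\ell\cdot(1_D,\chi_0)$, where $\ell:=\Inf_E^G(\nu)$ is a linear character of $G$ trivial on $D$ (in particular on $D_2$) and on $Z$. As $D$ is a Sylow $p$-subgroup of $G$, $\ell$ has $p'$-order, so twisting by $\ell$ is a Morita auto‑equivalence $T_\ell$ of $B$ realised by a trivial‑source invertible $B$-$B$-bimodule $N_\ell$, and $\nu\mapsto[N_\ell]$ is an injective homomorphism $\Irr(L)\to\cT(B)\le\Pic(B)$.

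It now suffices to show that $\sigma\circ T_\ell\circ\sigma^{-1}=T_{\ell'}$ for some linear character $\ell'$ of $G$ again trivial on $D_2$ — equivalently, writing $M$ for a bimodule realising $\sigma$, that $M\otimes_B N_\ell\otimes_B M^{-1}\cong N_{\ell'}$ with $\ell'$ linear and trivial on $D_2$ — for then $\sigma(1_D,\chi)=(\sigma T_\ell\sigma^{-1})\bigl(\sigma(1_D,\chi_0)\bigr)=\ell'\cdot(\psi_{\chi_0}\otimes\theta_{\chi_0})=(\ell'\psi_{\chi_0})\otimes\theta_{\chi_0}$, giving $\theta_\chi=\theta_{\chi_0}$. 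I would establish this on the character level: twists $T_{\ell'}$ by linear characters $\ell'$ of $G$ trivial on $D_2$ are precisely the permutations of $\Irr(B)$ that fix every $D_2$‑central character and that, via Corollary~\ref{cor:sigma}, come from a permutation of $\Irr(E,\varphi)$ of the special shape $\chi\mapsto(\ell'|_E)\cdot\chi$; one then checks that $\sigma T_\ell\sigma^{-1}$ lies in this family by combining the two perfect‑isometry conditions (the $\CF(G,B,\cO)$‑condition at the maximal ideal of $\cO$, and the $\mathbb{Z}\prj(B)$‑condition) with Lemma~\ref{lem:NO} applied to the induced characters $(\lambda_\chi,\chi'_\chi)\uparrow^G$ and with the observation recorded before this section that $1-\omega\notin p\cO$ for a non‑trivial $p$‑power root of unity $\omega$ unless $p=2$ and $\omega=-1$. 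This last point — ruling out that an arbitrary Morita self‑equivalence conjugates an $\ell$‑twist trivial on $D$ into a twist with a non‑trivial $D_2$‑component — is the main obstacle; everything else is bookkeeping with the material of $\S$\ref{sec:abab} and $\S$\ref{sec:Bchar}.
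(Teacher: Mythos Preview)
Your setup and reduction are correct and share ingredients with the paper: you rightly observe $D_2\le Z(G)$, write $\sigma(1_D,\chi)=\psi_\chi\otimes\theta_\chi$, and reduce to showing $\theta_\chi$ is independent of $\chi$. The reformulation via linear twists is also sound in principle: if $\sigma T_\ell\sigma^{-1}$ acted on $\Irr(B)$ as multiplication by a linear character trivial on $D_2$, you would indeed be done.

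The genuine gap is your final paragraph. You assert that ``one then checks that $\sigma T_\ell\sigma^{-1}$ lies in this family'' and list the perfect-isometry conditions, Lemma~\ref{lem:NO}, and the $1-\omega$ observation as ingredients, but you give no construction that links them. The perfect-isometry conditions constrain how $\sigma$ acts on $\CF(G,B,\cO)$ and $\mathbb{Z}\prj(B)$; Lemma~\ref{lem:NO} gives divisibility of values of certain induced characters; but nothing connects these to the $D_2$-component of $\sigma T_\ell\sigma^{-1}$ until you exhibit a specific class function whose values force the desired congruence. Your characterisation of the twists $T_{\ell'}$ as those permutations ``fixing every $D_2$-central character'' is precisely the conclusion you want for $\sigma T_\ell\sigma^{-1}$, so invoking it is circular; and there is no reason a priori that the subgroup $\{T_{\ell'}:\ell'|_{D_2}=1\}$ is normal in $\Pic(B)$, nor that $\sigma_{\Br}(\,\cdot\,\nu)\sigma_{\Br}^{-1}$ is again multiplication by a linear character on $\Irr(E,\varphi)$.

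The paper supplies exactly the missing construction. For each maximal $E$-invariant $D'<D_1$ (with $E'=C_E(D_1/D')$, so $E/E'$ is cyclic) and each $\tau\in\Irr(E',\varphi)$, it forms $(1_{D'\times D_2},\tau)\uparrow^G$, which by Lemma~\ref{lem:NO} lies in $[D_1:D']\CF(G,B,\cO)$. Decomposing this as $\sum_{\chi\in\Irr(E,\tau)}(1_D,\chi)$ plus terms whose $\sigma$-images have controlled restriction to $E$, applying the perfect isometry $\sigma$, and evaluating at $x(e_{\sigma_{\Br}(\chi_1)}-e_{\sigma_{\Br}(\chi_2)})$ for $x\in D_2$ yields $\theta_{\chi_1}(x)-\theta_{\chi_2}(x)\in[D_1:D']\cO$; the $1-\omega$ observation then forces $\theta_{\chi_1}=\theta_{\chi_2}$ for $\chi_1,\chi_2\in\Irr(E,\tau)$. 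Finally one shows $\bigcap E'=Z$, so closure under tensoring by $\Irr(E,1_{E'})$ for all such $E'$ gives closure under $\Irr(E,1_Z)$, which by Lemma~\ref{lem:tenchar}(1) covers all of $\Irr(E,\varphi)$. This explicit test function and the layer-by-layer argument over the $E'$ are the ideas your sketch is missing.
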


\begin{proof}
Let's fix some $\xi\in\Irr(E,\varphi)$ and define $\theta\in\Irr(D_2)$ by
\begin{align*}
\sigma(1_D,\xi)=\psi_\xi\otimes\theta,
\end{align*}
where $\psi_\xi\in\Irr(D_1\rtimes E,\varphi)$.
\newline
\newline
Let $D'\lhd D_1\rtimes E$ be properly contained in $D_1$ and maximal with respect to these two conditions. Define $E'\leq E$ to be the subgroup inducing the identity on $D_1/D'$. In particular, $D_1/D'$ is elementary abelian and $E/E'$ acts indecomposably on $D_1/D'$. Therefore, by part (1) of Lemma~\ref{lem:jacobi}, $E/E'$ is cyclic and, by part (2) of the same Lemma and Lemma~\ref{lem:actirr}, $E'$ is the stabiliser in $E$ of any non-trivial character of $D_1/D'$ inflated to $D_1$.
\newline
\newline
We claim that the the set of $\chi\in\Irr(E,\varphi)$ that satisfy (\ref{algn:ker}) is closed under tensoring with elements of $\Irr(E,1_{E'})$. Let $\tau\in\Irr(E',\varphi)$. Note that if $\tau\uparrow^E$ is irreducible then $\tau\uparrow^E$ (and by part (1) of Lemma~\ref{lem:tenchar} every character of $\Irr(E,\varphi)$) is fixed under tensoring with elements of $\Irr(E,1_{E'})$ and there is nothing to prove. So let's assume $\tau\uparrow^E$ is reducible. Next consider
\begin{align*}
(1_{D'\times D_2},\tau)\in\Irr((D'\times D_2)\rtimes E').
\end{align*}
Since $(D'\times D_2)\rtimes E'\lhd G$ and $\Stab_G(1_{D'\times D_2},\tau)\geq D\rtimes E'$, it follows from Lemma~\ref{lem:NO} that
\begin{align}\label{algn:D1:D}
(1_{D'\times D_2},\tau)\uparrow^G\in[D_1:D']\CF(G,B,\mathcal{O}).
\end{align}
Since $E/E'$ is cyclic, a direct calculation gives
\begin{align}\label{algn:sum}
\begin{split}
(1_{D'\times D_2},\tau)\uparrow^G&=(1_{D'\times D_2},\tau)\uparrow^{D\rtimes E'}\uparrow^G\\
&=\sum_{\chi\in\Irr(E,\tau)}(1_D,\chi)+\sum_{1_D\neq\lambda\in\Irr(D,1_{D'\times D_2})}(\lambda,\tau)\uparrow^G.
\end{split}
\end{align}
We define $\sigma_{\Br}$ as in Corollary~\ref{cor:sigma} and set $X:=\sigma_{\Br}(\Irr(E,\tau))$. Since $\sigma(1_D,\chi)\downarrow_E=\chi$,
\begin{align}\label{algn:br}
\sigma(1_D,\chi)\downarrow_{D_2\times E}=\theta_\chi\otimes\sigma_{\Br}(\chi),
\end{align}
for all $\chi\in\Irr(E,\tau)$, where $\theta_\chi\in\Irr(D_2)$. Again, since $E/E'$ is cyclic,
\begin{align*}
\sigma((\lambda,\tau)\uparrow^G)\downarrow_E=\sigma_{\Br}((\lambda,\tau)\uparrow^G\downarrow_E)=\sigma_{\Br}(\tau\uparrow^E)=\sum_{\eta\in X}\eta,
\end{align*}
for all $1\neq\lambda\in\Irr(D,1_{D'\times D_2})$. So
\begin{align*}
\sigma((\lambda,\tau)\uparrow^G)=(\lambda_\sigma,\zeta_{\lambda_\sigma})\uparrow^G=(\lambda_{\sigma,1},\zeta_{\lambda_\sigma})\uparrow^{D_1\rtimes E}\otimes\lambda_{\sigma,2}&&\text{and}&&\zeta_{\lambda_\sigma}\uparrow^E=\sum_{\eta\in X}\eta,
\end{align*}
for some $\lambda_\sigma\in\Irr(D)$, where $\lambda_\sigma=\lambda_{\sigma,1}\otimes\lambda_{\sigma,2}$, for $\lambda_{\sigma,1}\in\Irr(D_1)$, $\lambda_{\sigma,2}\in\Irr(D_2)$ and $\zeta_{\lambda_\sigma}\in\Irr(E_{\lambda_\sigma},\varphi)$. In particular,
\begin{align}\label{algn:br2}
\sigma((\lambda,\tau)\uparrow^G)(x(e_{\sigma_{\Br}(\chi_1)}-e_{\sigma_{\Br}(\chi_2)}))=0,
\end{align}
for all $x\in D_2$ and $\chi_1\neq\chi_2\in \Irr(E,\tau)$. Now~\cite[Th\'eor\`eme 1.2]{br90} implies that $\sigma$ induces a perfect self-isometry of $B$. So plugging $x(e_{\sigma_{\Br}(\chi_1)}-e_{\sigma_{\Br}(\chi_2)})$ into $\sigma$ applied to (\ref{algn:sum}) and applying (\ref{algn:D1:D}), (\ref{algn:br}) and (\ref{algn:br2}) gives
\begin{align*}
\theta_{\chi_1}(x)\sigma_{\Br}(\chi_1)(e_{\sigma_{\Br}(\chi_1)})-\theta_{\chi_2}(x)\sigma_{\Br}(\chi_2)(e_{\sigma_{\Br}(\chi_2)})\in[D_1:D']\cO,
\end{align*}
for all $\chi_1\neq\chi_2\in \Irr(E,\tau)$ and $x\in D_2$. Now part (1) of Lemma~\ref{lem:tenchar} and the fact that $E$ is a $p'$-group imply that $\sigma_{\Br}(\chi_1)(e_{\sigma_{\Br}(\chi_1)})=\sigma_{\Br}(\chi_2)(e_{\sigma_{\Br}(\chi_2)})\in\cO^\times$. Therefore,
\begin{align*}
\theta_{\chi_1}(x)-\theta_{\chi_2}(x)\in[D_1:D']\cO.
\end{align*}
It follows from the comments preceding Lemma~\ref{lem:Op} that $\theta_{\chi_1}(x)=\theta_{\chi_2}(x)$ unless $[D_1:D']=2$. However, if $[D_1:D']=2$ then $E'=E$ and there is nothing to prove. Since $x\in D_2$ was arbitrary, $\theta_{\chi_1}=\theta_{\chi_2}$. Now, since (\ref{algn:br}) gives that
\begin{align*}
\sigma(1_D,\chi_1)=\psi_{\chi_1}\otimes\theta_{\chi_1}\text{ and }\sigma(1_D,\chi_2)=\psi_{\chi_2}\otimes\theta_{\chi_2},
\end{align*}
for some $\psi_{\chi_1},\psi_{\chi_2}\in\Irr(D_1\rtimes E,\varphi)$, we have proved that if (\ref{algn:ker}) holds for one $\chi\in\Irr(E,\tau)$ it holds for all of them. Since the choice of $\tau\in\Irr(E',\varphi)$ was arbitrary we have proved that the set of $\chi\in\Irr(E,\varphi)$ satisfying (\ref{algn:ker}) is closed under tensoring with elements of $\Irr(E,1_{E'})$.
\newline
\newline
In the final part of the proof we prove that the intersection of all possible choices for $E'$ is $Z$. We will have then proved that the set of $\chi\in\Irr(E,\varphi)$ that satisfy (\ref{algn:ker}) is closed under tensoring with elements of $\Irr(E,1_Z)$. By part (1) of Lemma~\ref{lem:tenchar} we will then be done.
\newline
\newline
Let's decompose
\begin{align*}
D_1=Q_1\times\dots\times Q_t,
\end{align*}
where $E/C_E(Q_i)$ acts indecomposably on $Q_i$. Now Lemma~\ref{lem:fratt} implies that $E/C_E(Q_i)$ also acts indecomposably on each $Q_i/\Phi(Q_i)$. In particular, for each $1\leq i\leq t$,
\begin{align*}
\left(\prod_{j\neq i}Q_j\right)\times\Phi(Q_i)
\end{align*}
is a valid choice for $D'$ and $C_E(Q_i)$ a valid choice for $E'$. Finally, by the definition of $Z$,
\begin{align*}
\bigcap_{i}C_E(Q_i)=C_E(D_1)=C_E(D)=Z
\end{align*}
and the proof is complete.
\end{proof}

\section{Blocks with one simple module}\label{sec:one}

Throughout this section we assume that $B$ has, up to isomorphism, a unique simple module. By part (1) of Lemma~\ref{lem:brauer} and part (2) of Lemma~\ref{lem:tenchar}, this implies that $Z=Z(E)$. For each $g\in L=E/Z$ we define
\begin{align*}
\phi_g:L&\to Z\\
h&\mapsto [\tilde{g},\tilde{h}],
\end{align*}
where $\tilde{g}$ and $\tilde{h}$ represent lifts to $E$ of $g$ and $h$ respectively. Note it is easy to check that $\phi_g$ is a well-defined group homomorphism.

\begin{lem}\label{lem:duality}
\begin{align*}
L&\to\Hom(L,\cO^\times)\\
g&\mapsto\varphi\circ\phi_g
\end{align*}
is an isomorphism of groups.
\end{lem}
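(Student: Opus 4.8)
The plan is to show the map $\Theta\colon L\to\Hom(L,\cO^\times)$, $g\mapsto\varphi\circ\phi_g$, is a homomorphism, is injective, and that domain and codomain have the same (finite) order. Since $\phi_g$ is a group homomorphism $L\to Z$ and $\varphi\in\Hom(Z,\cO^\times)$, each $\varphi\circ\phi_g$ indeed lies in $\Hom(L,\cO^\times)$. For the homomorphism property, I would work with lifts: for $g_1,g_2,h\in L$ with lifts $\tilde g_1,\tilde g_2,\tilde h\in E$, one has $\phi_{g_1g_2}(h)=[\widetilde{g_1g_2},\tilde h]$; since $Z=Z(E)$ is central and the relevant commutators land in $Z$, the standard commutator identity $[ab,c]=[a,c]^b[b,c]=[a,c][b,c]$ (the conjugation being trivial because $[a,c]\in Z$) gives $\phi_{g_1g_2}=\phi_{g_1}\phi_{g_2}$ as maps $L\to Z$, hence $\Theta(g_1g_2)=\Theta(g_1)\Theta(g_2)$.

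Next, injectivity: suppose $\varphi\circ\phi_g=1_L$, i.e. $\varphi([\tilde g,\tilde h])=1$ for all $h\in L$, equivalently (since $\varphi$ is faithful on $Z$) $[\tilde g,\tilde h]=1$ for all $\tilde h\in E$, i.e. $\tilde g\in Z(E)=Z$, i.e. $g=1$ in $L$. So $\ker\Theta$ is trivial. Finally I would compare orders. Both $L$ and $\Hom(L,\cO^\times)$ are finite abelian groups; since $K$ contains enough roots of unity, $\Hom(L,\cO^\times)=\Hom(L,K^\times)$ is the usual character group of the finite abelian group $L$, which has the same order as $L$. An injective homomorphism between finite groups of equal order is an isomorphism, so $\Theta$ is an isomorphism.

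The one point that needs a little care — and which I expect to be the only real obstacle — is verifying that $|{\Hom(L,\cO^\times)}|=|L|$ in this setting, i.e. that all the one-dimensional characters of $L$ actually take values in $\cO^\times$ rather than merely in $K^\times$. This follows from the running hypothesis that $K$ contains all $|H|^{\text{th}}$ roots of unity: the values of a character of $L$ are $|L|$-th roots of unity, and these are integral over $\ZZ$, hence lie in $\cO$, and being units they lie in $\cO^\times$. With that observation in place the order count is immediate and the lemma follows. (An alternative to the order-count step: exhibit surjectivity directly by noting that $\phi\colon L\to Z(E)=Z$ realised via $g\mapsto\phi_g$ together with the faithfulness of $\varphi$ already identifies $L$ with $\Hom(L,Z)$; but the counting argument is cleaner.)
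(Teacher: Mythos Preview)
Your argument is correct: it is the standard verification that the commutator pairing on a group of central type induces a perfect duality, and your use of $Z=Z(E)$ (the standing hypothesis of this section) in the injectivity step is legitimate. The paper itself does not give a proof but simply cites \cite[Lemma 4.1]{hk05}; your write-up is precisely the elementary unpacking of that reference, so the approaches are essentially the same.
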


\begin{proof}
This is just~\cite[Lemma 4.1]{hk05} and its proof.
\end{proof}

We now introduce some further notation. First decompose
\begin{align*}
D=P_1\times\dots\times P_n,
\end{align*}
where $E/C_E(P_i)$ acts on each $P_i$ indecomposably and $P_i\cong (C_{p^{n_i}})^{m_i}$. We choose this decomposition such that
\begin{align*}
D_1=P_1\times\dots\times P_t&&\text{and}&&D_2=P_{t+1}\times\dots\times P_n,
\end{align*}
for some $1\leq t\leq n$. In particular, $m_i=1$ for all $i>t$. We now state and prove a partial analogue of~\cite[Theorem 1.1(i)]{bg04} and~\cite[Corollary 4.3]{hk05} over $\cO$. Note we do not describe the basic algebra of $B$ exactly, in contrast to~\cite[Theorem 1.1(i)]{bg04} and~\cite[Corollary 4.3]{hk05}, where the basic algebra of $k\otimes_{\cO}B$ (for $D$ elementary abelian and arbitrary abelian respectively) is completely described.

\begin{lem}\label{lem:basic}
There exists an $\cO$-algebra $A$ with the following properties:
\begin{enumerate}
\item $B\cong M_d(\cO)\otimes_{\cO}A$, where $d$ is the dimension of the unique simple $B$-module. In particular, $A$ is basic.
\item There exist $X_{ij}\in A$, for $1\leq i\leq n$ and $1\leq j\leq m_i$ that generate $A$ as an $\cO$-algebra. Furthermore,
\begin{align*}
\cB:=\left\{\prod_{i=1}^n\prod_{j=1}^{m_i} X_{ij}^{l_{ij}}|0\leq l_{ij}<p^{n_i},\text{ for all }1\leq i\leq n,1\leq j\leq m_i\right\}
\end{align*}
forms an $\cO$-basis for $A$.
\item There exist $p'$-roots of unity $q_{i_1j_1,i_2j_2}\in \cO^\times$ such that
\begin{align*}
X_{ij}^{p^{n_i}}\in pA,&&X_{i_1j_1}X_{i_2j_2}=q_{i_1j_1,i_2j_2}X_{i_2j_2}X_{i_1j_1},
\end{align*}
for all $1\leq i,i_1,i_2\leq n$ and $1\leq j\leq m_i$, $1\leq j_1\leq m_{i_1}$, $1\leq j_2\leq m_{i_2}$. Moreover,
\begin{enumerate}
\item $q_{ij_1,ij_2}=1$, for all $1\leq i\leq n$ and $1\leq j_1,j_2\leq m_i$.
\item $q_{i_1j_1,i_2j_2}q_{i_2j_2,i_1j_1}=1$, for all $1\leq i_1,i_2\leq n$ and $1\leq j_1\leq m_{i_1}$, $1\leq j_2\leq m_{i_2}$.
\item $q_{i_1j_1,i_2(j_2+1)}=q_{i_1j_1,i_2j_2}^p$, for all $1\leq i_1,i_2\leq n$ and $1\leq j_1\leq m_{i_1}$, $1\leq j_2\leq m_{i_2}$, where $j_2+1$ is considered modulo $m_{i_2}$.
\item Let $1\leq i_1\leq n$ and $1\leq j_1\leq m_{i_1}$, then $q_{i_1j_1,ij}=1$ for all $1\leq i\leq n$ and $1\leq j\leq m_i$ if and only if $i_1>t$.
\item Let $1\leq i_1\leq t$ and $1\leq j_1\leq m_{i_1}$. For all $1\leq j_2\leq m_{i_1}$ with $j_2\neq j_1$, there exist $1\leq i\leq t$ and $1\leq j\leq m_i$ such that $q_{i_1j_1,ij}\neq q_{i_1j_2,ij}$.
\end{enumerate}
\item We can choose the $\cO$-algebra isomorphism $B\to M_d(\cO)\otimes_{\cO}A$ such that we have the following identification of ideals
\begin{align*}
((1-x)e_\varphi)_{x\in D_1}=M_d(\cO)\otimes_{\cO}T,
\end{align*}
where $T:=(X_{ij})_{\substack{1\leq i\le t\\ 1\leq j\leq m_i}}\lhd A$.
\item For all $i>t$,
\begin{align*}
\left\langle X_{i1}^{l_i}|0\leq l_i<p^{n_i}\right\rangle_{\cO}
\end{align*}
is an $\cO$-subalgebra of $A$. Moreover, if $p=2$ and $D_1$ is elementary abelian, then for each $1\leq i\leq t$ and $1\leq j\leq m_i$, $m_i>1$ and
\begin{align*}
X_{ij}^2-2X_{i(j+1)}\in 2\left\langle\prod_{l=1}^{m_i} X_{il}^{\epsilon_{il}}|\epsilon_{il}\in\{0,1\},\sum_{l=1}^{m_i}\epsilon_{il}>1\right\rangle_{\cO},
\end{align*}
where $j+1$ is considered modulo $m_i$. 
\end{enumerate}
\end{lem}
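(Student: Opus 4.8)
The plan is to realise $B$ explicitly as a crossed product of $\cO D$ with $L$ and to read off all five items from Lemmas~\ref{lem:duality},~\ref{lem:jacobi} and~\ref{lem:tech}. Set $N:=D\times Z\lhd G$; since $Z\leq Z(E)$ acts trivially on $D$ it is central in $G$, so $e_\varphi$ is $G$-stable and $\cO Ne_\varphi\cong\cO D$ (as $\cO Ze_\varphi\cong\cO$, $\varphi$ being faithful). Choosing lifts $\tilde g\in E$ of $g\in L=E/Z$ and putting $t_g:=\tilde g e_\varphi$ gives $B=\bigoplus_{g\in L}(\cO D)\,t_g$ with $t_g\,a\,t_g^{-1}={}^{g}a$ for $a\in\cO D$ and $t_gt_h=\bar\alpha(g,h)t_{gh}$, where $\bar\alpha(g,h)=\varphi([\tilde g,\tilde h]\text{-type element of }Z)$ is a $p'$-root of unity in $\cO^\times$. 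Write $\langle g,h\rangle:=\varphi(\phi_g(h))=\varphi([\tilde g,\tilde h])$; by Lemma~\ref{lem:duality} this is a perfect pairing $L\times L\to\cO^\times$, and $[\tilde h,\tilde g]=[\tilde g,\tilde h]^{-1}$ makes it alternating. The subalgebra $\cO_{\bar\alpha}L:=\bigoplus_g\cO t_g$ has commutator pairing $(g,h)\mapsto\bar\alpha(g,h)\bar\alpha(h,g)^{-1}=\langle g,h\rangle^{\pm1}$, hence is non-degenerate; since $|L|$ is invertible in $\cO$, $\cO_{\bar\alpha}L$ is a separable $\cO$-order in the split simple algebra $K_{\bar\alpha}L\cong M_d(K)$ with $d^2=|L|$, so $\cO_{\bar\alpha}L\cong M_d(\cO)$. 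As $M_d(\cO)$ is Azumaya it splits off: $B\cong M_d(\cO)\otimes_\cO A$ with $A:=C_B(\cO_{\bar\alpha}L)$, proving (1) — here $A$ is local because $\overline B$ has a single simple module, so $d$ is indeed the dimension of the unique simple $B$-module. A short computation of the centraliser yields
\begin{align*}
A=\bigoplus_{g\in L}(\cO D)_{\chi_g}\,t_g,
\end{align*}
where $\chi_g:=\langle g,-\rangle$, so that $g\mapsto\chi_g$ is the isomorphism $L\xrightarrow{\sim}\Hom(L,\cO^\times)$ of Lemma~\ref{lem:duality}, and $(\cO D)_\chi$ is the $\chi$-weight space of the $L$-action. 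Since $\cO D$ is a $p'$-permutation $\cO L$-module it is the direct sum of its weight spaces, so $\mathrm{rank}_\cO A=\mathrm{rank}_\cO\cO D=|D|$; and because $L$ fixes $D_2$ pointwise we get $A\cong A_1\otimes_\cO\cO D_2$ with $A_1=\bigoplus_g(\cO D_1)_{\chi_g}t_g$.

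Next I construct the generators. For $i>t$ pick a generator $x_i$ of $P_i\cong C_{p^{n_i}}\leq D_2$ and put $X_{i1}:=(1-x_i)e_\varphi$; these are central in $B$ (as $D_2\leq Z(G)$), and by Lemma~\ref{lem:tech}, $X_{i1}^{p^{n_i}}\in p\langle X_{i1}^l:1\le l<p^{n_i}\rangle_\cO\subseteq pA$, which already gives the first half of (5) and 3(d) for $i_1>t$. For $i\le t$: by Lemma~\ref{lem:jacobi}(1) the group $L_i:=L/C_L(P_i)$ is cyclic, a generator $g_i$ having the $m_i$ distinct eigenvalues $\lambda_i,\lambda_i^p,\dots,\lambda_i^{p^{m_i-1}}$ on $k\otimes P_i/\Phi(P_i)\cong J(kP_i)/J^2(kP_i)$ (Lemma~\ref{lem:jacobi}(3)). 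Write $\mu_{ij}:=\lambda_i^{p^{j-1}}$, viewed in $\Hom(L_i,\cO^\times)\hookrightarrow\Hom(L,\cO^\times)$, let $g^{(ij)}\in L$ be determined by $\chi_{g^{(ij)}}=\mu_{ij}$, choose $v_{ij}\in(\cO P_i)_{\mu_{ij}}\subseteq(\cO D)_{\mu_{ij}}$ reducing to the corresponding eigenvector of $J(kP_i)/J^2(kP_i)$, and set $X_{ij}:=v_{ij}\,t_{g^{(ij)}}\in A$. As $\mu_{ij}\neq 1$ the element $v_{ij}$ lies in the augmentation ideal $J_\cO(P_i)\subseteq J_\cO(D_1)$. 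Part (2) then follows by reduction modulo $\cJ$ and Nakayama: the $k$-algebra $\overline A$ is the quantum complete intersection on the $\overline X_{ij}$ — this is the content of~\cite[Theorem 1.1(i)]{bg04} and~\cite[Corollary 4.3]{hk05}, and is also what the computation over $k$ in place of $\cO$ above produces — so $\overline{\cB}$ is a $k$-basis of $\overline A$; hence $\cB$ spans $A$ over $\cO$, and since $|\cB|=\prod_i p^{n_im_i}=|D|=\mathrm{rank}_\cO A$ it is an $\cO$-basis, and in particular the $X_{ij}$ generate $A$.

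Now the relations. Using $t_g\,a\,t_g^{-1}={}^ga$, that $v_{ij}$ has $L$-weight $\mu_{ij}$, that each $\cO P_i$ is commutative, and that $\bar\alpha(g,h)\bar\alpha(h,g)^{-1}=\chi_g(h)$, a telescoping computation gives
\begin{align*}
X_{i_1j_1}X_{i_2j_2}=\chi_{g^{(i_2j_2)}}(g^{(i_1j_1)})\,X_{i_2j_2}X_{i_1j_1},
\qquad
X_{ij}^{p^{n_i}}=v_{ij}^{p^{n_i}}\,t_{g^{(ij)}}^{p^{n_i}},
\end{align*}
where in the second identity the scalar from telescoping is $\chi_{g^{(ij)}}(g^{(ij)})^{\binom{p^{n_i}}{2}}=\langle g^{(ij)},g^{(ij)}\rangle^{\binom{p^{n_i}}{2}}=1$. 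By Lemma~\ref{lem:tech}, $v_{ij}^{p^{n_i}}=p y_{ij}$ with $y_{ij}\in J_\cO(D_1)$ of weight $\chi_{g^{(ij)}}^{p^{n_i}}=\chi_{(g^{(ij)})^{p^{n_i}}}$, so $y_{ij}t_{(g^{(ij)})^{p^{n_i}}}\in A$ and hence $X_{ij}^{p^{n_i}}\in pA$. Thus $q_{i_1j_1,i_2j_2}=\chi_{g^{(i_2j_2)}}(g^{(i_1j_1)})$, a $p'$-root of unity, and properties 3(a)--(e) drop out of the arithmetic of the perfect alternating pairing $\langle\cdot,\cdot\rangle$ and Lemma~\ref{lem:jacobi}(1): (a) because $L^{(i)}:=C_L(P_i)^{\perp}\cong\widehat{L_i}$ is cyclic, hence totally isotropic, and all $g^{(ij)}$ (for $1\le j\le m_i$) lie in it; (b) is the antisymmetry $\langle g,h\rangle\langle h,g\rangle=1$; (c) because $\mu_{i(j+1)}=\mu_{ij}^p$ forces $g^{(i(j+1))}=(g^{(ij)})^{p}$ and characters respect $p$-th powers; (d) because $\{g^{(ij)}:i\le t\}$ generates $L$ (dual to $\bigcap_{i\le t}C_L(P_i)=C_L(D)=1$) while $g^{(i_1j_1)}\neq 1$ for $i_1\le t$ (since $C_{P_{i_1}}(L)=1$), so the values $\langle g^{(ij)},g^{(i_1j_1)}\rangle$ cannot all be trivial; (e) because $\mu_{i_1,1},\dots,\mu_{i_1,m_{i_1}}$ are pairwise distinct, so the functions $g\mapsto\langle g,g^{(i_1j_1)}\rangle$ are pairwise distinct on the generating set $\{g^{(ij)}\}$.

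For (4): $((1-x)e_\varphi)_{x\in D_1}$ is the relative augmentation ideal $\ker\bigl(B\to\cO(G/D_1)e_\varphi\bigr)$, and $G/D_1\cong D_2\times E$ gives $\cO(G/D_1)e_\varphi\cong M_d(\cO)\otimes_\cO\cO D_2$; so this ideal equals $M_d(\cO)\otimes T$ for the ideal $T\lhd A$ with $A/T\cong\cO D_2$. Each $X_{ij}$ with $i\le t$ lies in it (as $v_{ij}e_\varphi\in J_\cO(D_1)$), and $A/(X_{ij})_{i\le t}$ has $\cO$-rank $|D_2|=\mathrm{rank}(A/T)$, whence $T=(X_{ij})_{i\le t}$. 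The first half of (5) was noted above; the $\cO$-subalgebra claim follows since $X_{i1}^{p^{n_i}}$, being $p$ times an $\cO$-combination of $X_{i1},\dots,X_{i1}^{p^{n_i}-1}$, lies in $\langle X_{i1}^l:0\le l<p^{n_i}\rangle_\cO$. In the $p=2$, $D_1$ elementary abelian case, $m_i>1$ because $\lambda_i^{2^{m_i}}=\lambda_i$ with $\lambda_i\neq1$; and using $(1-x)^2=2(1-x)$ in $\cO P_i$ one gets $X_{ij}^2=\bar\alpha(g^{(ij)},g^{(ij)})v_{ij}^2 t_{(g^{(ij)})^2}=2w_{ij}t_{g^{(i(j+1))}}$, where $w_{ij}\in J_\cO(P_i)$ reduces modulo $J^2(kP_i)$ to the Frobenius twist of $\overline{v_{ij}}$, i.e.\ — after normalising the choice of $v_{i(j+1)}$ — to $\overline{v_{i(j+1)}}$; then $X_{ij}^2-2X_{i(j+1)}=2(w_{ij}-v_{i(j+1)})t_{g^{(i(j+1))}}$ with $w_{ij}-v_{i(j+1)}\in J_{\cO,2}(P_i)$, and $J_{\cO,2}(P_i)$ is spanned by products of at least two of the $(1-x_l)$'s, equivalently of the $v_{il}$'s, which translates into the stated membership in $2\langle\prod_l X_{il}^{\epsilon_{il}}:\sum_l\epsilon_{il}>1\rangle_\cO$.

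The main obstacle is precisely this last coordination: choosing the eigenvectors $v_{ij}$ (and the lifts $t_{g^{(ij)}}$) \emph{simultaneously} so that all of (2), the exactness of the $q$-commutation relations with $q\in\cO^\times$ (not merely modulo $\cJ$), the integrality $X_{ij}^{p^{n_i}}\in pA$, and the characteristic-$2$ relation in (5) hold at once — in particular, that the normalisations forced by $X_{ij}^2\leftrightarrow X_{i(j+1)}$ are consistent around the length-$m_i$ cycle $j\mapsto j+1\mapsto\dots\mapsto j$ (recall $(g^{(ij)})^{2^{m_i}}=g^{(ij)}$). I would handle this by fixing $v_{i1}$ arbitrarily (subject to reducing to a generator-eigenvector) and \emph{defining} $v_{i(j+1)}$ recursively from the relation, then checking that one full turn of the cycle returns $v_{i1}$ up to a unit — which it must, since the analogous statement over $k$ holds by~\cite{bg04,hk05} and the remaining ambiguity is an odd-order root of unity pinned down by Hensel's lemma.
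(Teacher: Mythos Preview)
Your proposal is correct and follows essentially the same approach as the paper: the paper's matrix subalgebra $\cO Ee_\varphi$ is exactly your $\cO_{\bar\alpha}L$, and the paper's generators $X_{ij}=h_{ij}W_{ij}e_\varphi$ are your $v_{ij}\,t_{g^{(ij)}}$ (with $h_{ij}$ a lift of $g^{(ij)}$ and $W_{ij}$ your weight vector $v_{ij}$), so the crossed-product/weight-space language is just a repackaging of the paper's explicit element computations. The one place where the paper is more explicit is the cycle-closing in part~(5): rather than invoking Hensel's lemma abstractly, the paper first obtains $X_{ij}^2-2\lambda_j\mu_jX_{i(j+1)}\in 2\langle\cdots\rangle_\cO$ for units $\lambda_j,\mu_j$, then rescales by $\alpha_j$ defined via $\alpha_{j+1}=\alpha_j^2\lambda_j\mu_j$ with $\alpha_1$ chosen so that $\alpha_1^{2^{m_i}-1}$ equals the inverse of the accumulated product --- possible precisely because raising to the odd power $2^{m_i}-1$ is surjective on $\cO^\times$, which is your Hensel argument made concrete.
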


\begin{proof}$ $
\begin{enumerate}
\item By part (1) of Lemma~\ref{lem:brauer}, $|\Irr(E,\varphi)|=1$ and so $\cO Ee_\varphi\cong M_d(\cO)$, where $d$ is the dimension of the unique simple $B$-module. Therefore, $\cO Ee_\varphi$ is a central simple subalgebra of $B$ and so we have $B\cong \cO Ee_\varphi\otimes_{\cO}C_B(\cO Ee_\varphi)$. We, therefore, define $A:=C_B(\cO Ee_\varphi)$.
\item By parts (1) and (3) of Lemma~\ref{lem:jacobi}, for each $1\leq i\leq n$, $J(kP_i)/J^2(kP_i)$ decomposes into $m_i$ non-isomorphic linear representations of $E$,
\begin{align*}
\{\rho_i=\rho_i^{p^m_i},\rho_i^p,\dots,\rho_i^{p^{m_i-1}}\},
\end{align*}
with respect to the conjugation action of $E$ on $P_i$. As $p\nmid|E|$, we can decompose
\begin{align*}
J(kP_i)=\langle w_{i1}\rangle_k\oplus\dots\oplus\langle w_{im_i}\rangle_k\oplus J^2(kP_i),
\end{align*}
into $kE$-modules, where $\langle w_{ij}\rangle_k$ affords the representation $\rho_i^{p^{j-1}}$. Again, since $p\nmid|E|$ and $k\otimes_{\cO}J_{\cO}(P_i)\cong J(kP_i)$, we can lift $w_{ij}$ to $W_{ij}\in J_{\cO}(P_i)$ such that $\langle W_{ij}\rangle_{\cO}$ affords the representation $\varrho_i^{p^{j-1}}$, where $\varrho_i$ is the unique lift of $\rho_i$ to a representation of $E$ over $\cO$. Certainly $Z$ commutes with $D$ and so, by Lemma~\ref{lem:duality}, we can choose $h_{i1}\in E$ such that $\varrho_i=\varphi\circ\phi_{h_{i1}Z}$. Setting $h_{ij}:=h_{i1}^{p^{j-1}}$ gives that $\varrho_i^{p^{j-1}}=\varphi\circ\phi_{h_{ij}Z}$, for $1\leq j\leq m_i$. Note that, since $\varrho_i^{p^{n_1}}=\varrho_i$, $h_{i1}^{p^{n_i}}Z=h_{i1}Z$.
\newline
\newline
Now set $X_{ij}=h_{ij}W_{ij}e_\varphi$, for all $1\leq i\leq n$ and $1\leq j\leq m_i$. We first note that
\begin{align*}
hX_{ij}&=hh_{ij}W_{ij}e_\varphi=hh_{ij}h^{-1}hW_{ij}h^{-1}he_\varphi=h_{ij}[h_{ij},h^{-1}](\varrho_i^{p^{j-1}}(h)W_{ij})he_\varphi\\
&=h_{ij}\varphi([h_{ij},h^{-1}])(\varrho_i^{p^{j-1}}(h)W_{ij})he_\varphi=h_{ij}\varrho_i^{p^{j-1}}(h^{-1})(\varrho_i^{p^{j-1}}(h)W_{ij})he_\varphi\\
&=h_{ij}W_{ij}he_\varphi=X_{ij}h,
\end{align*}
for all $h\in E$ and so $X_{ij}\in C_B(\cO Ee_\varphi)$. Note that the $\overline{X}_{ij}=h_{ij}w_{ij}e_\varphi\in k\otimes_{\cO}A$ are precisely the $X_i$'s constructed in the proof of~\cite[Corollary 4.3]{hk05}. In particular, $k\otimes_{\cO}\cB$ forms a basis for $C_{kB}(kEe_\varphi)$. So $\cB$ is an $\cO$-linearly independent set and $\langle \cB\rangle_{\cO}$ is an $\cO$-summand of $B$. Therefore, since $\langle\cB\rangle_{\cO}\subseteq C_B(\cO Ee_\varphi)$ and
\begin{align*}
d^2.\rk_{\cO}(\langle\cB\rangle_{\cO})=&d^2.\dim_k(\langle k\otimes_{\cO}\cB\rangle_k)=\dim_k(B)\\
=&\rk_{\cO}(B)=d^2.\rk_{\cO}(C_B(\cO Ee_\varphi)),
\end{align*}
we have that $\langle\cB\rangle_{\cO}=C_B(\cO Ee_\varphi)$.
\item By Lemma~\ref{lem:tech}, $W_{ij}^{p^{n_i}}\in pJ_{\cO}(P_i)$ for all $i$ and $j$. Therefore, since
\begin{align*}
h_{ij}W_{ij}h_{ij}^{-1}=\varrho_i^{p^{j-1}}(h_{ij})W_{ij}=\varphi([h_{ij},h_{ij}])W_{ij}=W_{ij}
\end{align*}
and $A$ is an $\cO$-summand of $B$, we have
\begin{align*}
X_{ij}^{p^{n_i}}\in pB\cap A=pA.
\end{align*}
Next
\begin{align*}
X_{i_1j_1}X_{i_2j_2}e_{\varphi}&=h_{i_1j_1}W_{i_1j_1}h_{i_2j_2}W_{i_2j_2}e_{\varphi}=\varrho_{i_1}^{p^{j_1-1}}(h_{i_2j_2}^{-1})h_{i_1j_1}h_{i_2j_2}W_{i_1j_1}W_{i_2j_2}e_{\varphi}\\
&=\varphi([h_{i_1j_1},h_{i_2j_2}^{-1}])\varphi([h_{i_1j_1}^{-1},h_{i_2j_2}^{-1}])h_{i_2j_2}h_{i_1j_1}W_{i_2j_2}W_{i_1j_1}e_\varphi\\
&=\varphi([h_{i_1j_1},h_{i_2j_2}^{-1}])\varphi([h_{i_1j_1}^{-1},h_{i_2j_2}^{-1}])\varrho_{i_2}^{p^{j_2-1}}(h_{i_1j_1})h_{i_2j_2}W_{i_2j_2}h_{i_1j_1}W_{i_1j_1}e_\varphi\\
&=\varphi([h_{i_1j_1},h_{i_2j_2}^{-1}])\varphi([h_{i_1j_1}^{-1},h_{i_2j_2}^{-1}])\varphi([h_{i_2j_2},h_{i_1j_1}])h_{i_2j_2}W_{i_2j_2}h_{i_1j_1}W_{i_1j_1}e_\varphi\\
&=\varphi([h_{i_2j_2},h_{i_1j_1}])X_{i_2j_2}X_{i_1j_1}e_\varphi,
\end{align*}
for all $1\leq i_1,i_2\leq n$ and $1\leq j_1\leq m_{i_1}$, $1\leq j_2\leq m_{i_2}$. We, therefore, set $q_{i_1j_1,i_2j_2}:=\varphi([h_{i_2j_2},h_{i_1j_1}])=\varrho_{i_2}^{p^{j_2-1}}(h_{i_1j_1})$. Parts (a) and (b) follow immediately from this definition. Part (c) holds since
\begin{align*}
\varphi([h_{i_2(j_2+1)},h_{i_1j_1}])=\varrho_{i_2}^{p^{j_2}}(h_{i_1j_1})=\varphi([h_{i_2j_2},h_{i_1j_1}])^p.
\end{align*}
As representations of $L$, we claim that the $\varrho_i$'s, for $1\leq i\leq t$, generate $\Hom(L,\cO^\times)$. Assume this is not the case and so they generate some proper subgroup of $\Hom(L,\cO^\times)$. Therefore, there exists some $\{1\}\neq L'\leq L$ such that $\varrho_i(l)=1$, for all $1\leq i\leq t$ and $l\in L'$. So, by the definition of the $\varrho_i$'s and Lemma~\ref{lem:jacobi}, $L'$ commutes with $P_i/\Phi(P_i)$ and therefore by Lemma~\ref{lem:fratt}, $L'$ commutes with $P_i$, for all $1\leq i\leq t$. This contradicts $Z:=C_E(D)=C_E(D_1)$.
\newline
\newline
For part (d), note that $q_{i_1j_1,ij}=1$ for all $1\leq i\leq n$ and $1\leq j\leq m_i$ if and only if $\varrho_i(h_{i_1j_1})=1$ for all such $i$ if and only if $h_{i_1j_1}\in Z$ (since the $\varrho_i$'s generate $\Hom(L,\cO^\times)$) if and only if $\varrho_{i_1}$ is trivial. However, by Lemmas~\ref{lem:fratt} and~\ref{lem:jacobi}, $\varrho_{i_1}$ is trivial if and only if $E$ acts trivially on $P_i$ i.e. $i_1>t$.
\newline
\newline
For part (e) we suppose the contrary, that is there exists $1\leq j_2\leq m_{i_1}$ with $j_1\neq j_2$ such that $q_{i_1j_1,ij}=q_{i_1j_2,ij}$ for all $1\leq i\leq t$ and $1\leq j\leq m_i$. In other words, $\varrho_i^{p^{j-1}}(h_{i_1j_1})=\varrho_i^{p^{j-1}}(h_{i_1j_2})$, for all such $i$ and $j$. Since the $\varrho_i$'s generate $\Hom(L,\cO^\times)$, this implies that $h_{i_1j_1}Z=h_{i_1j_2}Z$, which in turn implies $\varrho_{i_1}^{p^{j_1-1}}=\varrho_{i_1}^{p^{j_2-1}}$ contradicting part (1) of Lemma~\ref{lem:jacobi}.
\item First note that the $w_{ij}$'s, for $1\leq i\leq t$ and $1\leq j\leq m_i$, form a basis for
\begin{align*}
J(kD_1)/J^2(kD_1)\cong\bigoplus_{i=1}^t J(kP_i)/J^2(kP_i).
\end{align*}
Therefore,
\begin{align*}
(W_{ij})_{\substack{1\leq i\leq t \\ 1\leq j\leq m_i}}+J(\cO D_1)J_{\cO}(\cO D_1)=J_{\cO}(\cO D_1),
\end{align*}
as $\cO D_1$-modules, where $J_{\cO}(\cO D_1)=(1-x)_{x\in D_1}\lhd \cO D_1$. Nakayama's lemma now gives that the $W_{ij}$'s, for $1\leq i\leq t$ and $1\leq j\leq m_i$, generate $J_{\cO}(\cO D_1)$. Finally, noting that
\begin{align*}
W_{ij}e_\varphi=h_{ij}^{-1}e_\varphi\otimes X_{ij}\in \cO Ee_\varphi\otimes_{\cO}A,
\end{align*}
for all $1\leq i\leq t$ and $1\leq j\leq m_i$, we get that
\begin{align*}
((1-x)e_\varphi)_{x\in D_1}=\cO Ee_\varphi\otimes_{\cO} (X_{ij})_{\substack{1\leq i\leq t \\ 1\leq j\leq m_i}}\lhd \cO Ee_\varphi\otimes_{\cO}A,
\end{align*}
as required.
\item Similarly to the proof of part (4),
\begin{align}\label{algn:basis}
\left\{\prod_{j=1}^{m_i} W_{ij}^{l_{ij}}|0\leq l_{ij}<p^{n_i},\text{ for all }1\leq j\leq m_i\right\}
\end{align}
forms an $\cO$-span of $\cO P_i$, for all $1\leq i\leq n$. (Note that we need not take higher powers of $W_{ij}$ since, as noted in the proof of part (3), $W_{ij}^{p^{n_i}}\in pJ_{\cO}(\cO P_i)$.) By comparing $\cO$-ranks, (\ref{algn:basis}) must actually form a basis of $\cO P_i$. The first statement now follows from the fact that, since $E$ commutes with $P_i$, $h_{i1}\in Z$, for all $i>t$.
\newline
\newline
From now on we assume $p=2$ and $D_1$ is elementary abelian. We fix some $1\leq i\leq t$. If $m_i=1$, then $P_i$ has no non-trivial automorphisms and so $P_i\leq C_D(E)=D_2$, a contradiction. So we must have $m_i>1$. Let $1\leq j\leq m_i$, where we are considering $j$ modulo $m_i$. Of course, $n_i=1$ and so, by Lemma~\ref{lem:tech}, $W_{ij}^2=2y$ for some $y\in J_{\cO}(P_i)\backslash J_{\cO}(P_i)_2$. Therefore, $\langle y\rangle_{\cO}$ affords $\varrho_i^{2^j}$ and so, by the construction of the $W_{il}$'s and using the basis from (\ref{algn:basis}), $y=\lambda_j W_{i(j+1)}+W$, for some $\lambda_j\in\cO$ and
\begin{align*}
W\in\left\langle\prod_{l=1}^{m_i} W_{il}^{\epsilon_{il}}|\epsilon_{il}\in\{0,1\},\sum_{l=1}^{m_i}\epsilon_{il}>1\right\rangle_{\cO}.
\end{align*}
Since $y\notin J_{\cO}(P_i)_2$, in fact $\lambda_j\in\cO^\times$. By the construction of the $h_{il}$'s, we also have $h_{ij}^2Z=h_{i(j+1)}Z$ and so $h_{ij}^2e_\varphi=\mu_jh_{i(j+1)}e_\varphi$, for some $\mu_j\in\cO^\times$. Therefore,
\begin{align*}
X_{ij}^2=&(h_{ij}W_{ij}e_\varphi)^2=h_{ij}^2W_{ij}^2e_\varphi=2\mu_jh_{i(j+1)}(\lambda_jW_{i(j+1)}+W)e_\varphi\\
=&2\mu_j\lambda_jX_{i(j+1)}+2\mu_jh_{i(j+1)}We_\varphi.
\end{align*}
Let $\prod_{l=1}^{m_i} W_{il}^{\epsilon_{il}}$ appear with non-zero coefficient in $W$. Since $y$ affords the character $\varrho_i^{2^j}$, so do $\langle W\rangle_{\cO}$ and $\langle\prod_{l=1}^{m_i} W_{il}^{\epsilon_{il}}\rangle_{\cO}$. Therefore, by Lemma~\ref{lem:duality} and the construction of the $h_{il}$'s, $h_{i(j+1)}Z=\left(\prod_{l=1}^{m_i} h_{il}^{\epsilon_{il}}\right)Z$ and so
\begin{align*}
\left\langle h_{i(j+1)}\prod_{l=1}^{m_i} W_{il}^{\epsilon_{il}}e_\varphi\right\rangle_{\cO}=\left\langle\prod_{l=1}^{m_i} X_{il}^{\epsilon_{il}}\right\rangle_{\cO},
\end{align*}
for all $\prod_{l=1}^{m_i} W_{il}^{\epsilon_{il}}$ appearing with non-zero coefficient in $W$. We have now shown that
\begin{align*}
X_{ij}^2-2\lambda_j\mu_jX_{i(j+1)}=2\mu_jh_{i(j+1)}We_\varphi\in2\left\langle\prod_{l=1}^{m_i} X_{il}^{\epsilon_{il}}|\epsilon_{il}\in\{0,1\},\sum_{l=1}^{m_i}\epsilon_{il}>1\right\rangle_{\cO},
\end{align*}
for all $1\leq j\leq m_i$. Since raising to the power $2^{m_i}-1$ on $\cO^\times$ is a surjective map, there exists $\alpha_1\in\cO^\times$ such that
\begin{align*}
\alpha_1^{2^{m_i}-1}=(\lambda_1^{2^{m_i-1}}\mu_1^{2^{m_i-1}})^{-1}(\lambda_2^{2^{m_i-2}}\mu_2^{2^{m_i-2}})^{-1}\dots(\lambda_{m_i}\mu_{m_i})^{-1}.
\end{align*}
Now set $\alpha_{j+1}=\alpha_j^2\lambda_j\mu_j$, for all $1\leq j\leq m_i$. Note that by this definition
\begin{align*}
\alpha_{m_i+1}=\alpha_1^{2^{m_i}}(\lambda_1^{2^{m_i-1}}\mu_1^{2^{m_i-1}})(\lambda_2^{2^{m_i-2}}\mu_2^{2^{m_i-2}})\dots(\lambda_{m_i}\mu_{m_i})=\alpha_1.
\end{align*}
In other words, $\alpha_j$ is well-defined when we consider $j\mod m_i$. Therefore,
\begin{align*}
(\alpha_jX_{ij})^2-2\alpha_{j+1}X_{i(j+1)}\in2\left\langle\prod_{l=1}^{m_i} X_{il}^{\epsilon_{il}}|\epsilon_{il}\in\{0,1\},\sum_{l=1}^{m_i}\epsilon_{il}>1\right\rangle_{\cO},
\end{align*}
for all $1\leq j\leq m_i$. Let's replace $W_{ij}$ with $\alpha_jW_{ij}$ and therefore $X_{ij}$ with $\alpha_jX_{ij}$, for $1\leq j\leq m_i$. These new $X_{ij}$'s satisfy the required properties.
\end{enumerate}
\end{proof}

For what follows, recall that $T:=(X_{ij})_{\substack{1\leq i\le t\\ 1\leq j\leq m_i}}\lhd A$, $\cO_p:=\cO/p\cO$ and also $\cO_I:=\cO/I$, where $I:=\cJ.p\cO$, for $\cJ$ the unique maximal ideal of $\cO$. In addition we set $A_k:=k\otimes_\cO A$, $A_p:=\cO_p\otimes_\cO A$, $A_I:=\cO_I\otimes_\cO A$, $T_p:=\cO_p\otimes_{\cO}T\lhd A_p$ and $T_I:=\cO_I\otimes_{\cO}T\lhd A_I$. The following immediate consequence of Lemma~\ref{lem:basic} can be thought of an $\cO_p$ analogue to~\cite[Theorem 1.1(i)]{bg04} and~\cite[Corollary 4.3]{hk05}.

\begin{cor}\label{cor:Op}
\begin{align*}
A_p\cong\cO_p[X_{ij},1\leq i\leq n,1\leq j\leq m_i]/(X_{i_1j_1}X_{i_2j_2}=q_{i_1j_1,i_2j_2}X_{i_2j_2}X_{i_1j_1},X_{ij}^{p^{n_i}}=0).
\end{align*}
\end{cor}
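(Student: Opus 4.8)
The plan is to read off the presentation of $A_p$ directly from the structural data already assembled in Lemma~\ref{lem:basic}, so the proof is essentially a bookkeeping exercise with no new ideas required. First I would note that by part (2) of Lemma~\ref{lem:basic} the elements $X_{ij}$ (for $1\leq i\leq n$, $1\leq j\leq m_i$) generate $A$ as an $\cO$-algebra, hence their images generate $A_p=\cO_p\otimes_\cO A$ as an $\cO_p$-algebra; this gives a surjective $\cO_p$-algebra homomorphism from the free algebra $\cO_p\langle X_{ij}\rangle$ onto $A_p$. Next, part (3) of Lemma~\ref{lem:basic} provides the relations $X_{i_1j_1}X_{i_2j_2}=q_{i_1j_1,i_2j_2}X_{i_2j_2}X_{i_1j_1}$ in $A$, which hold a fortiori in $A_p$, and $X_{ij}^{p^{n_i}}\in pA$, which becomes $X_{ij}^{p^{n_i}}=0$ after applying $\cO_p\otimes_\cO-$. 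Therefore the surjection factors through the quotient algebra
\begin{align*}
R:=\cO_p[X_{ij},1\leq i\leq n,1\leq j\leq m_i]/(X_{i_1j_1}X_{i_2j_2}=q_{i_1j_1,i_2j_2}X_{i_2j_2}X_{i_1j_1},X_{ij}^{p^{n_i}}=0),
\end{align*}
yielding a surjective $\cO_p$-algebra homomorphism $R\twoheadrightarrow A_p$.

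To finish I would show this map is injective by a rank (equivalently, a counting) argument. In $R$ the commutation relations let one move all occurrences of a fixed $X_{ij}$ together (up to scalars), and the truncation relations $X_{ij}^{p^{n_i}}=0$ mean only exponents $0\leq l_{ij}<p^{n_i}$ survive; hence the ordered monomials $\prod_{i=1}^n\prod_{j=1}^{m_i}X_{ij}^{l_{ij}}$ with $0\leq l_{ij}<p^{n_i}$ span $R$ over $\cO_p$, so $R$ is generated as an $\cO_p$-module by at most $\prod_{i,j}p^{n_i}$ elements. By part (2) of Lemma~\ref{lem:basic}, the set $\cB$ of exactly these monomials is an $\cO$-basis of $A$, so $A_p=\cO_p\otimes_\cO A$ is free over $\cO_p$ of rank $|\cB|=\prod_{i,j}p^{n_i}$. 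Since the surjection $R\twoheadrightarrow A_p$ sends the spanning set of $R$ onto a basis of $A_p$, it must be an isomorphism (the spanning set of $R$ is $\cO_p$-linearly independent, as any relation would push down to $A_p$), and $R\cong A_p$ as claimed.

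The only point requiring a little care — and the closest thing to an obstacle — is making the spanning/counting argument airtight over the base ring $\cO_p$, which is not a field: one must be sure that a surjection of $\cO_p$-modules carrying a generating set of size $N$ onto a free module of rank $N$ is an isomorphism. This is standard (the generating set maps to a basis, so it is itself a basis, so the map is bijective on basis elements and $\cO_p$-linear, hence an isomorphism), and it is exactly the same style of $\cO$-rank comparison already used twice inside the proof of Lemma~\ref{lem:basic} (parts (2) and (5)). Given that, the corollary is immediate, which is why the statement is labelled an "immediate consequence" of Lemma~\ref{lem:basic}; I would keep the write-up to a few lines, citing parts (2) and (3) of that lemma.
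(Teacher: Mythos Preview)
Your proposal is correct and matches the paper's approach: the paper gives no proof at all, simply labelling the corollary an ``immediate consequence of Lemma~\ref{lem:basic}'', and what you have written is exactly the natural unpacking of that claim using parts (2) and (3) of the lemma together with the obvious rank comparison.
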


We may express each element of $A_k$ uniquely as a $k$-linear combination of elements of $k\otimes_{\cO}\cB$ and in the following lemma we refer to the terms of this $k$-linear combination. Similarly we refer to the terms of an element of $A_p$ and $A_I$. Set
\begin{align*}
\mathbf{A}=\{(a_{t+1},\dots,a_n)|0\leq a_i<p^{n_i}\text{ for all }t+1\leq i\leq n\}.
\end{align*}
We denote by $X^\mathbf{a}$, the monomial $X_{t+1}^{a_{t+1}}\dots X_n^{a_n}\in k[X_{i1}]_{t+1\leq i\le n}$, where $\mathbf{a}=(a_{t+1},\dots,a_n)\in\mathbf{A}$. For $\mathbf{a},\mathbf{b}\in\mathbf{A}$, $\mathbf{a}+\mathbf{b}$ signifies the componentwise sum, when this is still in $\mathbf{A}$. We have a partial order on $\mathbf{A}$ given by $\mathbf{a}\leq \mathbf{c}$ if and only if there exists $\mathbf{b}\in\mathbf{A}$ such that $\mathbf{a}+\mathbf{b}=\mathbf{c}$. In this case note that $X^{\mathbf{a}+\mathbf{b}}=X^{\mathbf{a}}X^{\mathbf{b}}$. We adopt the same notation for monomials in $\cO_p[X_{i1}]_{t+1\leq i\le n}$ and $\cO_I[X_{i1}]_{t+1\leq i\le n}$.
\newline
\newline
In what follows, when it is clear from the context, we will denote by $X_{ij}$ its image in $A_k$, $A_p$ or $A_I$, for $1\leq i\leq n$ and $1\leq j\leq m_i$. The next lemma should be thought of as an attempt to generalise~\cite[Lemma 2.3]{bk07}, where automorphisms of $A_k$ are studied. In~\cite{bk07} the term special generating set is used to describe certain subsets of $A_k$. We note that the image of the $X_{ij}$'s under a $k$-algebra automorphism of $A_k$ is a special generating set. We use this fact below.

\begin{lem}\label{lem:coeff}$ $
\begin{enumerate}
\item Let $\phi$ be an $\cO_p$-algebra automorphism of $A_p$, $1\leq r\leq n$ and $1\leq s\leq m_r$. Then there exists some $1\leq r_0\leq n$ and $1\leq s_0\leq m_{r_0}$ such that $X_{r_0s_0}$ appears with unit coefficient in $\phi(X_{rs})$. If, in addition, $1\leq u\leq n$ and $1\leq v\leq m_u$ such that $X_{u_0v_0}$ appears with unit coefficient in $\phi(X_{uv})$, for some $1\leq u_0\leq n$ and $1\leq v_0\leq m_{u_0}$, then $q_{rs,uv}=q_{r_0s_0,u_0v_0}$. In particular, there does not exist $1\leq s_0'\leq m_{r_0}$ different from $s_0$ such that $X_{r_0s_0'}$ also appears with unit coefficient in $\phi(X_{rs})$. We have all the analogous results for $A_I$.
\item All $\cO_p$-algebra automorphisms of $A_p$ leave $T_p$ invariant.
\item Assume $p=2$ and $D_1$ is elementary abelian. All $\cO_I$-algebra automorphisms of $A_I$ leave $T_I$ invariant.
\end{enumerate}
\end{lem}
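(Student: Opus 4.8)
The plan is to argue that the monomial structure of $A_p$ (and $A_I$) forced by Corollary~\ref{cor:Op} and Lemma~\ref{lem:basic} is rigid enough that an algebra automorphism cannot do anything too wild on the generators $X_{rs}$. First I would establish part (1). Fix $\phi$ and note that, since the $X_{ij}$ generate $A_p$ and every $X_{ij}$ is nilpotent (by part~(3) of Lemma~\ref{lem:basic}, $X_{ij}^{p^{n_i}}\in pA$, so $X_{ij}^{p^{n_i}}=0$ in $A_p$), the image $\phi(X_{rs})$ lies in the radical; hence its constant term vanishes and it is a linear combination of the basis monomials $X^{\mathbf{l}}=\prod X_{ij}^{l_{ij}}$ of positive degree. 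The crucial point is that $\phi(X_{rs})$ must have a \emph{linear} term with unit coefficient: if every $X_{ij}$ appeared in $\phi(X_{rs})$ only with coefficient in the maximal ideal or in degree $\geq 2$, then $\phi(X_{rs})\in J(A_p)^2 + \mathfrak{m}J(A_p)$, and since the $\phi(X_{ij})$ generate $A_p$ this would force $J(A_p)=J(A_p)^2+\mathfrak{m}J(A_p)$, contradicting Nakayama (the $X_{ij}$ form a basis of $J/J^2$ up to the $\mathfrak{m}$-part, exactly as in Lemma~\ref{lem:basic}(4)). This is precisely the ``special generating set'' observation the paragraph before the lemma alludes to, transported from $A_k$ to $A_p$ and $A_I$. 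So some $X_{r_0s_0}$ occurs in $\phi(X_{rs})$ with unit coefficient.

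For the commutation constraint, I would use the relations $X_{i_1j_1}X_{i_2j_2}=q_{i_1j_1,i_2j_2}X_{i_2j_2}X_{i_1j_1}$. Applying $\phi$ to $X_{rs}X_{uv}=q_{rs,uv}X_{uv}X_{rs}$ gives $\phi(X_{rs})\phi(X_{uv})=q_{rs,uv}\phi(X_{uv})\phi(X_{rs})$. Now I would expand both sides in the monomial basis and compare the coefficient of the monomial $X_{r_0s_0}X_{u_0v_0}$ (reduced to normal form). On the left this coefficient involves $q_{r_0s_0,u_0v_0}$ coming from reordering, on the right it involves $q_{rs,uv}$; matching these — and here one needs that the relevant coefficient is a unit, which it is since $X_{r_0s_0}$ and $X_{u_0v_0}$ appear with unit coefficients — yields $q_{rs,uv}=q_{r_0s_0,u_0v_0}$. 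The ``in particular'' clause follows by taking $(u,v)=(r,s)$ and $(u_0,v_0)=(r_0,s_0')$: we would get $q_{r_0s_0,r_0s_0'}=q_{rs,rs}=1$, but part~(e) of Lemma~\ref{lem:basic}(3) says that for $r_0\le t$ there is some $(i,j)$ with $q_{r_0s_0,ij}\neq q_{r_0s_0',ij}$, so $s_0,s_0'$ behave differently under commutation; combined with the matching of $q$'s just proved (applied with $(u,v)$ ranging over all pairs) this is a contradiction unless $r_0>t$, and for $r_0>t$ part~(d) gives $m_{r_0}=1$ so $s_0'=s_0$ anyway. The argument for $A_I$ is verbatim the same, using that the $q$'s are $p'$-roots of unity hence lie in $\cO_I^\times$.

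For part (2): $T_p=(X_{ij})_{i\le t}$ and I want $\phi(T_p)=T_p$. By part~(d) of Lemma~\ref{lem:basic}(3), the index pairs with $i\le t$ are characterised intrinsically as those $(r,s)$ for which $q_{rs,ij}\neq 1$ for \emph{some} $(i,j)$ — equivalently, $X_{rs}$ fails to be central modulo the lower-degree-monomial issues. More precisely I would show $T_p$ equals the ideal generated by all $X_{rs}$ that are not central in $A_p$, and then observe that by part~(1) the property ``$X_{r_0s_0}$ occurs with unit coefficient in $\phi(X_{rs})$ implies $q_{rs,uv}=q_{r_0s_0,u_0v_0}$ for matched pairs'' forces $r\le t \iff r_0\le t$: if $r\le t$ then some $q_{rs,uv}\neq 1$, hence $q_{r_0s_0,u_0v_0}\neq 1$, hence $r_0\le t$; conversely if $r>t$ then $X_{rs}$ is central, so $\phi(X_{rs})$ is central, so all its unit-coefficient linear terms $X_{r_0s_0}$ are central (one needs that a central element's linear part consists of central generators — again read off from the monomial basis), so $r_0>t$. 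Thus $\phi$ maps the linear span of $\{X_{rs}:r\le t\}$ modulo $J^2+\mathfrak m J$ into itself, and since $T_p$ is the ideal generated by these, $\phi(T_p)\subseteq T_p$; applying the same to $\phi^{-1}$ gives equality. Part~(3) is identical over $\cO_I$, with the extra hypothesis $p=2$, $D_1$ elementary abelian only needed so that the relevant structure constants from Lemma~\ref{lem:basic}(5) behave — but the proof of invariance itself only uses parts~(1), (3d).

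The main obstacle I anticipate is the bookkeeping in part~(1): when one reorders a product of two arbitrary monomials $X^{\mathbf{l}}X^{\mathbf{l}'}$ into normal form, many cross terms appear, and one must be careful that the coefficient of the \emph{specific} normal-form monomial $X_{r_0s_0}X_{u_0v_0}$ receives contributions only from the pieces one expects — i.e.\ that it is genuinely governed by a single $q$-value and a unit, with no unexpected cancellation or collision with other monomials of the same total multidegree. Handling the case $r_0=u_0$ (so the two chosen generators lie in the same $P_i$-block, where $q_{r_0s_0,u_0v_0}=q_{r_0s_0,r_0v_0}$ may be a nontrivial $p$-power of a fixed root of unity) needs slightly more care than the case $r_0\neq u_0$, but part~(e) of Lemma~\ref{lem:basic}(3) is exactly the tool that resolves it.
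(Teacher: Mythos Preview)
Your treatment of part~(1) is broadly in the right spirit and close to the paper's: the paper also reduces the existence of a unit linear term to the induced automorphism of $A_k$ and the fact that the $X_{ij}$ form a basis of $J(A_k)/J^2(A_k)$, and then lifts the $q$-matching from $k$ to $\cO_p$ (and $\cO_I$) using that $p'$-roots of unity lift uniquely. The paper avoids the monomial bookkeeping you flag as the ``main obstacle'' by quoting \cite[Lemma~2.3(i)]{bk07} for the $k$-level statement; your direct coefficient comparison would also work, but it does require the case analysis you anticipate (in particular the possibility that $X_{r_0s_0}$ also occurs with unit coefficient in $\phi(X_{uv})$ must be handled separately). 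Your ``in particular'' argument is slightly tangled---the equation $q_{r_0s_0,r_0s_0'}=1$ is automatic from (3a) and gives nothing; what you really need (and what the paper does) is to take the $(u_0,v_0)$ provided by (3e), observe that \emph{every} $X_{u_0v_0}$ occurs with unit coefficient in some $\phi(X_{uv})$ because $\phi_k$ is an automorphism of $J(A_k)/J^2(A_k)$, and then get $q_{r_0s_0,u_0v_0}=q_{rs,uv}=q_{r_0s_0',u_0v_0}$.

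There is, however, a genuine gap in your argument for part~(2). You show that for $r\le t$ every \emph{unit-coefficient linear} term $X_{r_0s_0}$ of $\phi(X_{rs})$ has $r_0\le t$, i.e.\ that $\phi$ preserves the span of $\{X_{rs}:r\le t\}$ modulo $J^2+\mathfrak mJ$. But this does \emph{not} give $\phi(X_{rs})\in T_p$: nothing prevents $\phi(X_{rs})$ from containing a term $\pi X_{(t+1)1}$ with $\pi$ in the maximal ideal of $\cO_p$, or a higher monomial such as $X_{(t+1)1}X_{(t+2)1}$ involving only generators with index $>t$. Such terms lie outside $T_p$ (whose $\cO_p$-basis consists of those monomials in $\cB$ involving at least one $X_{ij}$ with $i\le t$) and are invisible modulo $J^2+\mathfrak mJ$. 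The paper's proof addresses exactly this: assuming $\phi(X_{rs})\notin T_p$, it picks a \emph{minimal} monomial $X^{\mathbf a}$ in the commuting variables $X_{(t+1)1},\dots,X_{n1}$ occurring with nonzero coefficient in $\phi(X_{rs})$ or $\phi(X_{uv})$ (for a suitable $(u,v)$ with $q_{rs,uv}\ne 1$), and then compares the coefficients of $X^{\mathbf a}X_{r_0s_0}$ and $X^{\mathbf a}X_{u_0v_0}$ in $\phi(X_{rs}X_{uv})$ versus $q_{rs,uv}\phi(X_{uv}X_{rs})$; a valuation argument in $\cO_p$ then forces a contradiction with part~(1). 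Your intrinsic characterisation idea (``ideal generated by non-central generators'') does not rescue this, since the two-sided ideal generated by commutators sits inside $T_p^2$, strictly smaller than $T_p$.

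Your claim that part~(3) ``is identical over $\cO_I$'' and ``only uses parts~(1), (3d)'' is not correct. Over $\cO_I$ one has $2\ne 0$ (indeed $I=\cJ\cdot 2\cO$), so for $i\le t$ the relation $X_{ij}^2\in 2A$ no longer kills $X_{ij}^2$; instead part~(5) of Lemma~\ref{lem:basic} gives $X_{ij}^2\equiv 2X_{i(j+1)}$ modulo higher terms, and these squares produce genuine extra contributions when one expands $\phi(X_{rs})\phi(X_{uv})$. The paper's proof of (3) is accordingly substantially more delicate than (2): after the minimal-$\mathbf a$ step as in (2), it must pass to a further minimal monomial $X^{\mathbf b}X_{u_0(v_0-1)}$ with $\mathbf b<\mathbf a$ and use the squaring relation together with (3c) to reach a contradiction. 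The hypothesis that $D_1$ is elementary abelian is used precisely to have part~(5) of Lemma~\ref{lem:basic} available, not merely for cosmetic reasons.
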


\begin{proof}$ $
\begin{enumerate}
\item We prove all the results for $A_p$ and $A_I$ simultaneously.
\newline
\newline
As noted in the proof of Lemma~\ref{lem:basic}, the $X_{ij}\in A_k$ are precisely the $X_i$'s constructed in the proof of~\cite[Corollary 4.3]{hk05}. Certainly $A_k$ is local, it is the basic algebra of a block with one simple module, and so $J(A_k)$ is the ideal generated by the $X_{ij}$'s. In particular, the $X_{ij}$'s form a basis of $J(A_k)/J^2(A_k)$. Now $\phi$ induces a $k$-algebra automorphism $\phi_k$ of $A_k$. Therefore, there exists some $X_{r_0s_0}$ appearing with non-zero coefficient in $\phi_k(X_{rs})$. Since an element $x\in\cO$ is invertible if and only if $x\notin \cJ$, the first claim follows.
\newline
\newline
Suppose $X_{r_0s_0}\in A_k$ (respectively $X_{u_0v_0}\in A_k$) appears with non-zero coefficient in $\phi_k(X_{rs})$ (respectively $\phi_k(X_{uv})$). We note that, since $p'$-roots of unity in $k$ lift uniquely to $\cO$, we need only prove that $\overline{q}_{rs,uv}=\overline{q}_{r_0s_0,u_0v_0}$. By~\cite[Lemma 2.3(i)]{bk07}, for any $X_{ij}$ that appears with non-zero coefficient in $\phi_k(X_{rs})$, $\overline{q}_{ij,u_0v_0}=\overline{q}_{r_0s_0,u_0v_0}$ and an analogous statement for any $X_{ij}$ appearing with non-zero coefficient in $\phi_k(X_{uv})$. In particular, if $(r,s)=(u,v)$, then
\begin{align*}
\overline{q}_{r_0s_0,u_0v_0}=\overline{q}_{u_0v_0,u_0v_0}=1=\overline{q}_{rs,uv}.
\end{align*}
Therefore, we assume $(r,s)\neq(u,v)$. In this case $X_{rs}X_{uv}\in J^2(A_k)\backslash J^3(A_k)$ and so there must exist some $X_{r_0's_0'}\in A_k$ (respectively $X_{u_0'v_0'}\in A_k$) appearing with coefficient $\lambda_{rs}\in k^\times$ (respectively $\lambda_{uv}\in k^\times$) in $\phi_k(X_{rs})$ (respectively $\phi_k(X_{uv})$) such that $X_{r_0's_0'}X_{u_0'v_0'}$ appears with non-zero coefficent in $\phi_k(X_{rs}X_{uv})$. If $X_{r_0's_0'}$ also appears with non-zero coefficient in $\phi_k(X_{uv})$, then
\begin{align*}
\overline{q}_{r_0s_0,u_0v_0}=\overline{q}_{r_0's_0',u_0v_0}=\overline{q}_{u_0v_0,r_0's_0'}^{-1}=\overline{q}_{r_0's_0',r_0's_0'}^{-1}=1
\end{align*}
and similarly $\overline{q}_{r_0's_0',u_0'v_0'}=1$. Then, by comparing the non-zero coefficient of $X_{r_0's_0'}X_{u_0'v_0'}=X_{u_0'v_0'}X_{r_0's_0'}$ in $\phi_k(X_{rs}X_{uv})=\overline{q}_{rs,uv}\phi_k(X_{uv}X_{rs})$, we get that $\overline{q}_{rs,uv}=1$. If $X_{r_0's_0'}$ does not appear with non-zero coefficient in $\phi_k(X_{uv})$, then $X_{r_0's_0'}X_{u_0'v_0'}$ appears with non-zero coefficients $\lambda_{rs}\lambda_{uv}$ in $\phi_k(X_{rs}X_{uv})$ and $\lambda_{rs}\lambda_{uv}\overline{q}_{r_0's_0',u_0'v_0'}^{-1}$ in $\phi_k(X_{uv}X_{rs})=\overline{q}_{rs,uv}^{-1}\phi_k(X_{rs}X_{uv})$. Therefore,
\begin{align*}
\overline{q}_{r_0s_0,u_0v_0}=\overline{q}_{r_0's_0',u_0v_0}=\overline{q}_{u_0v_0,r_0's_0'}^{-1}=\overline{q}_{u_0'v_0',r_0's_0'}^{-1}=\overline{q}_{r_0's_0',u_0'v_0'}=\overline{q}_{rs,uv}.
\end{align*}
For the final claim suppose such an $s_0'$ does exist and choose $1\leq u_0\leq t$ and $1\leq v_0\leq m_{u_0}$ such that $q_{r_0s_0,u_0v_0}\neq q_{r_0s_0',u_0v_0}$. The existence of $u_0$ and $v_0$ is guaranteed by part (3e) of Lemma~\ref{lem:basic}. Now, by the same reasoning as in the first paragraph, there exists some $X_{uv}$ such that $X_{u_0v_0}$ appears with unit coefficient in $\phi(X_{uv})$. By the second paragraph
\begin{align*}
q_{r_0s_0,u_0v_0}=q_{rs,uv}=q_{r_0s_0',u_0v_0},
\end{align*}
a contradiction.
\item Let $\phi$ be an $\cO_p$-algebra automorphism of $A_p$ and assume that $\phi(X_{rs})\notin T_p$, for some $1\leq r\leq t$ and $1\leq s\leq m_r$. By part (3d) of Lemma~\ref{lem:basic}, there exist $1\leq u\leq t$ and $1\leq v\leq m_u$ such that $q_{rs,uv}\neq 1$. Let $\mathbf{a}\in\mathbf{A}$ such that $X^{\mathbf{a}}$ appears with non-zero coefficient in either $\phi(X_{rs})$ or $\phi(X_{uv})$ and let $\mathbf{a}$ be minimal with respect to this property. We set these coefficients to be $a_{rs}$ and $a_{uv}$ respectively.
\newline
\newline
Let $X_{r_0s_0}$ (respectively $X_{u_0v_0}$) appear with with coefficient $a_{r_0s_0}\in\cO_p^\times$ (respectively $a_{u_0v_0}\in\cO_p^\times$) in $\phi(X_{rs})$ (respectively $\phi(X_{uv})$), note their existence is guaranteed by part (1). Furthermore let $X_{r_0s_0}$ appear with coefficient $b_{r_0s_0}$ in $\phi(X_{uv})$ and similarly $X_{u_0v_0}$ with coefficient $b_{u_0v_0}$ in $\phi(X_{rs})$.
\newline
\newline
$X^{\mathbf{a}}X_{r_0s_0}$ appears with coefficient $a_{uv}a_{r_0s_0}+a_{rs}b_{r_0s_0}$ in both $\phi(X_{uv}X_{rs})$ and $\phi(X_{rs}X_{uv})=q_{rs,uv}\phi(X_{uv}X_{rs})$. Now $1-q_{rs,uv}$ is invertible in $k$ and hence also in $\cO_p$ and so $a_{uv}a_{r_0s_0}+a_{rs}b_{r_0s_0}=0$. Similarly, by comparing coefficients of $X^{\mathbf{a}}X_{u_0v_0}$, we have that $a_{rs}a_{u_0v_0}+a_{uv}b_{u_0v_0}=0$. Taking these two equalities together gives $v_p(a_{rs})=v_p(a_{uv})$, where $v_p$ is the valuation of $\cO_p$ with respect to its unique maximal ideal. This implies $b_{r_0s_0}$ and $b_{u_0v_0}$ are both invertible. Part (1) of this lemma now gives $1=q_{r_0s_0,r_0s_0}=q_{rs,uv}$, a contradiction.
\item Let $\phi$ be an $\cO_I$-algebra automorphism of $A_I$ and assume that $\phi(X_{rs})\notin T_I$, for some $1\leq r\leq t$ and $1\leq s\leq m_r$. We define $u,v,\mathbf{a},a_{rs}$ and $a_{uv}$ exactly as in part (2). Without loss of generality, let $a_{rs}$ be non-zero. Note that by part (2), we must have $a_{rs},a_{uv}\in 2\cO_I$. As in part (2), there must exist some $X_{u_0v_0}$ (respectively $X_{r_0s_0}$) with unit coefficient in $\phi(X_{uv})$ (respectively $\phi(X_{rs})$). Let $X_{u_0v_0}$ appear with coefficient $a_{u_0v_0}$ in $\phi(X_{uv})$ and $b_{u_0v_0}$ in $\phi(X_{rs})$. We note that by part (1), $b_{u_0v_0}$ is not invertible.
\newline
\newline
We now study the coefficient of $X^{\mathbf{a}}X_{u_0v_0}$ in $\phi(X_{rs}X_{uv})$ and $\phi(X_{uv}X_{rs})$. By part (5) of Lemma~\ref{lem:basic} the only non-zero contributions must come from taking $X^{\mathbf{a}}$ in $\phi(X_{rs})$ and $X_{u_0v_0}$ in $\phi(X_{uv})$ or taking $X^{\mathbf{a}_1}X_{u_0(v_0-1)}$ with unit coefficient in $\phi(X_{rs})$ and $X^{\mathbf{a}_2}X_{u_0(v_0-1)}$ with unit coefficient in $\phi(X_{uv})$, where $\mathbf{a}_1+\mathbf{a}_2=\mathbf{a}$. (Note that as $b_{u_0v_0}$ is not invertible and $a_{uv}\in 2\cO_I$, $b_{u_0v_0}a_{uv}=0$ and so we need not consider taking $X_{u_0v_0}$ in $\phi(X_{rs})$ and $X^{\mathbf{a}}$ in $\phi(X_{uv})$.) In particular, the coefficients of $X^{\mathbf{a}}X_{u_0v_0}$ in $\phi(X_{rs}X_{uv})$ and $\phi(X_{uv}X_{rs})=q_{rs,uv}^{-1}\phi(X_{rs}X_{uv})$ are the same and, since $q_{rs,uv}\neq 1$ in $\cO_p$, zero. This implies the case of taking $X^{\mathbf{a}_1}X_{u_0(v_0-1)}$ with unit coefficient in $\phi(X_{rs})$ and $X^{\mathbf{a}_2}X_{u_0(v_0-1)}$ with unit coefficient in $\phi(X_{uv})$ must make a non-zero contribution in both $\phi(X_{uv}X_{rs})$ and $\phi(X_{rs}X_{uv})$.
\newline
\newline
Let $\mathbf{b}\in\mathbf{A}$ such that $X^{\mathbf{b}}X_{u_0(v_0-1)}$ appears with unit coefficient in $\phi(X_{rs})$ or $\phi(X_{uv})$ and let $\mathbf{b}$ be minimal with respect to this property. Note that $\mathbf{b}<\mathbf{a}$ since otherwise $X_{u_0(v_0-1)}$ itself appears with unit coefficient in either $\phi(X_{rs})$ or $\phi(X_{uv})$, contradicting the minimality of $\mathbf{b}$, unless $\mathbf{a}=\mathbf{b}=\varnothing$. In this case $X_{u_0(v_0-1)}$ appears with unit coefficient in $\phi(X_{rs})$ but this is a contradiction as, by part (1), $q_{rs,uv}=q_{u_0(v_0-1),u_0v_0}$ and, by part (3a) of Lemma~\ref{lem:basic}, $q_{u_0(v_0-1),u_0v_0}=1$.
\newline
\newline
Say $X^{\mathbf{b}}X_{u_0(v_0-1)}$ appears with unit coefficient in $\phi(X_{rs})$. Then we consider the coefficient of $X^{\mathbf{b}}X_{u_0(v_0-1)}X_{u_0v_0}$ in both $\phi(X_{rs}X_{uv})$ and $\phi(X_{uv}X_{rs})=q_{rs,uv}^{-1}\phi(X_{rs}X_{uv})$. In particular, we consider their images in $k$. The only non-zero contribution is from taking $X^{\mathbf{b}}X_{u_0(v_0-1)}$ in $\phi(X_{rs})$ and  $X_{u_0v_0}$ in $\phi(X_{uv})$. (Note that by the final part of (1), $X_{u_0(v_0-1)}$ cannot appear with unit coefficient in $\phi(X_{uv})$). So the coefficients are equal and non-zero. This is a contradiction as $\overline{q}_{rs,uv}\neq 1$.
\newline
\newline
If $X^{\mathbf{b}}X_{u_0(v_0-1)}$ appears with unit coefficient in $\phi(X_{uv})$ we consider the images in $k$ of the coefficients of $X^{\mathbf{b}}X_{u_0(v_0-1)}X_{r_0s_0}$ in $\phi(X_{rs}X_{uv})$ and $\phi(X_{uv}X_{rs})=q_{rs,uv}^{-1}\phi(X_{rs}X_{uv})$. The only non-zero contribution is from taking $X_{r_0s_0}$ in $\phi(X_{rs})$ and  $X^{\mathbf{b}}X_{u_0(v_0-1)}$ in $\phi(X_{uv})$. Comparing coefficients gives $\overline{q}_{rs,uv}=\overline{q}_{r_0s_0,u_0(v_0-1)}$ and so, by part (1), $\overline{q}_{r_0s_0,u_0v_0}=\overline{q}_{r_0s_0,u_0(v_0-1)}$. However, part (3c) of Lemma~\ref{lem:basic} now implies $\overline{q}_{r_0s_0,u_0v_0}=1$. As in part (1), this means $q_{r_0s_0,u_0v_0}=1$. Finally, by part (1), we have $q_{rs,uv}=q_{r_0s_0,u_0v_0}=1$, a contradiction.
\end{enumerate}
\end{proof}

\section{Weiss' condition and the main theorem}\label{sec:main}

In this section we prove our main result. Along with the results already proved in this article, our main tool will be an application of Weiss' condition. Weiss' condition is a statement about permutation modules originally stated in~\cite[Theorem 2]{we88} but proved in its most general form in~\cite[Theorem 1.2]{mcsyza18}. Proposition~\ref{prop:indMor} is a consequence of the condition that was proved in~\cite[Propositions 4.3,4.4]{eali19}. We first set up some notation.
\newline
\newline
Let $b$ be a block of $\cO H$, for some finite group $H$ and $Q$ a normal $p$-subgroup of $H$. We denote by $b^Q$ the direct sum of blocks of $\cO(H/Q)$ dominated by $b$, that is those blocks not annihilated by the image of $e_b$ under the natural $\cO$-algebra homomorphism $\cO H\to \cO(H/Q)$. In this section it is also necessary to extend our definition of $\cT(b)$ to include the possibility of $b$ being a direct sum of blocks.

\begin{prop}\label{prop:indMor}$ $
\begin{enumerate}
\item The inflation map $\Inf:\Irr(H/Q)\to\Irr(H)$ induces a bijection between $\Irr(b^Q)$ and $\Irr(b,1_Q)$.
\item Suppose $M$ is a $b$-$b$-bimodule inducing a Morita auto-equivalence of $b$ that permutes the elements of $\Irr(b,1_Q)$. Then ${}^QM$, the set of fixed points of $M$ under the left action of $Q$, induces a Morita auto-equivalence of $b^Q$. Furthermore, the permutation of $\Irr(b^Q)$ induced by ${}^QM$ is identical to the permutation that $M$ induces on $\Irr(b,1_Q)$, once these two sets have been identified using part (1).
\item If ${}^QM\in\cT(b^Q)$, then $M\in\cT(b)$.
\end{enumerate}
\end{prop}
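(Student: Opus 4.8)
\textbf{Part (1)} is the standard ``domination of blocks'' picture, which I would make precise as follows. Let $\pi\colon\cO H\to\cO(H/Q)$ be the natural surjection. Since $e_b$ is a central idempotent and $\pi$ an algebra homomorphism, $\pi(e_b)$ is a central idempotent of $\cO(H/Q)$, and by the very definition of $b^Q$ it is the sum of the block idempotents constituting $b^Q$; write $\pi(e_b)=e_{b^Q}$. For $\chi\in\Irr(H/Q)$, comparing the formulas for central primitive idempotents gives $\pi(e_{\Inf(\chi)})=e_\chi$, the factor $|Q|$ from the fibres of $\pi$ being absorbed into $|H|=|Q|\,|H/Q|$. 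Hence $\Inf(\chi)\in\Irr(b)\iff e_b\,e_{\Inf(\chi)}=e_{\Inf(\chi)}\iff e_{b^Q}e_\chi=e_\chi\iff\chi\in\Irr(b^Q)$. As $Q\lhd H$, Clifford theory gives $\Irr(H,1_Q)=\{\psi\in\Irr(H)\mid Q\le\ker\psi\}=\Inf(\Irr(H/Q))$, and intersecting with $\Irr(b)$ yields the asserted bijection.

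\textbf{Part (2):} the plan is to verify everything over $K$ and then invoke integrality. As $M$ is $\cO$-free, the left-$Q$-fixed points ${}^QM$ form a pure $\cO$-submodule, hence an $\cO$-direct summand, so taking $Q$-fixed points commutes with $-\otimes_\cO K$. Over $K$, write $KM\cong\bigoplus_{\chi\in\Irr(b)}V_{\sigma(\chi)}\otimes_K V_\chi^*$ as a $Kb$-$Kb$-bimodule, where $\sigma$ is the permutation of $\Irr(b)$ afforded by $M$ and $V_\chi$ the simple $Kb$-module with character $\chi$. Since $Q\lhd H$, the $Q$-fixed points of a simple $KH$-module equal the whole module or zero according as $Q$ is or is not contained in its kernel; applying this to the left factor $V_{\sigma(\chi)}$, to the right factor $V_\chi^*$, and using the hypothesis that $\sigma$ preserves $\Irr(b,1_Q)$ --- i.e.\ $Q\le\ker\chi\iff Q\le\ker\sigma(\chi)$ --- we find that the left-$Q$-fixed, the right-$Q$-fixed and the $(Q\times Q)$-fixed points of $KM$ all coincide with $\bigoplus_{\chi\in\Irr(b,1_Q)}V_{\sigma(\chi)}\otimes V_\chi^*$. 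By purity the same equalities hold over $\cO$, so ${}^QM=M^{Q\times Q}$ is a genuine $b^Q$-$b^Q$-bimodule (with $e_{b^Q}=\pi(e_b)$ acting as the identity), and the computation above, read through the identification of part (1), shows that $K\otimes_\cO{}^QM$ induces a Morita equivalence of $Kb^Q$ with permutation $\sigma|_{\Irr(b,1_Q)}$. Lifting this to an $\cO$-linear Morita equivalence --- i.e.\ proving ${}^QM$ is projective on each side over $\cO b^Q$ with faithful actions, after which ordinary Morita theory together with the $K$-computation of ranks concludes --- is precisely \cite[Proposition 4.3]{eali19}, which I would cite (or reprove along these lines).

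\textbf{Part (3):} this is the application of Weiss' condition, \cite[Theorem 1.2]{mcsyza18}, packaged in \cite[Proposition 4.4]{eali19}, to the $\cO[H\times H]$-lattice $M$ and the normal $p$-subgroup $Q\times Q\lhd H\times H$. The key point, already noted in part (2), is ${}^QM=M^{Q\times Q}$; so the hypothesis ${}^QM\in\cT(b^Q)$ says exactly that the fixed-point lattice $M^{Q\times Q}$ is a $p$-permutation $\cO[(H/Q)\times(H/Q)]$-module --- one of the two inputs to Weiss' condition. The other input is that $\Res_{Q\times Q}M$ is a $p$-permutation $\cO[Q\times Q]$-module; this is where the hypothesis on $\sigma$ re-enters and is the substantive part of \cite[Proposition 4.4]{eali19}. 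Weiss' condition then forces $M$ to be a $p$-permutation $\cO[H\times H]$-module, and since $e_b$ acts as the identity on $M$ this says $M\in\cT(b)$.

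\textbf{The main obstacle} is the passage from $K$ to $\cO$ throughout: in part (2), upgrading the split-semisimple equivalence to an $\cO$-linear one, i.e.\ projectivity of ${}^QM$ over $\cO b^Q$; and in part (3), verifying that $\Res_{Q\times Q}M$ is a $p$-permutation module so that Weiss' condition applies. Both are supplied by \cite[Propositions 4.3 and 4.4]{eali19}, so in practice the proof reduces to part (1) plus the identification ${}^QM=M^{Q\times Q}$ and a citation of those two results.
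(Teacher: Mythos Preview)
Your proposal is correct and matches the paper's treatment: the paper does not prove this proposition at all but simply records it as a consequence of \cite[Propositions~4.3 and~4.4]{eali19}, together with Weiss' criterion \cite[Theorem~1.2]{mcsyza18}, which is exactly what you invoke after your expository sketch.

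One small remark on part~(3): you set up Weiss' criterion for the normal $p$-subgroup $Q\times Q$ of $H\times H$, which forces you to verify that $\Res_{Q\times Q}M$ is a permutation lattice---a fact you then defer to \cite{eali19}. A slightly cleaner route (and quite possibly the one taken in \cite{eali19}) is to apply Weiss with respect to $Q\times 1$ instead: then $\Res_{Q\times 1}M$ is \emph{free} simply because $M$ is projective as a left $\cO H$-module, so that hypothesis is automatic. The remaining input is that $M^{Q\times 1}={}^QM$ be $p$-permutation over $\cO[(H/Q)\times H]$; but you have already shown (via the $\sigma$-hypothesis) that $1\times Q$ acts trivially on ${}^QM$, so ${}^QM$ is inflated from the $\cO[(H/Q)\times(H/Q)]$-module assumed to lie in $\cT(b^Q)$, and inflation preserves the $p$-permutation property. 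Either way the conclusion is the same, and your identification ${}^QM=M^{Q\times Q}$ is the key bridge in both variants.
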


\begin{lem}\label{lem:D1kernel}
Let $M\in\Pic(B)$ and $\sigma$ the corresponding permutation of $\Irr(B)$. Then $\sigma$ permutes the elements of $\Irr(B,1_{D_1})$.
\end{lem}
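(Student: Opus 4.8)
The plan is to recast membership in $\Irr(B,1_{D_1})$ in module-theoretic terms using Lemma~\ref{lem:Op}, and then to exploit the fact that a Morita auto-equivalence of $B$ is governed, after reduction to the basic algebra, by an algebra automorphism whose behaviour modulo $p$ (or modulo $I$, when $p=2$) is pinned down by Lemma~\ref{lem:coeff}.

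First I would record two facts. By Brou\'e~\cite{br90}, $M$ induces a perfect self-isometry of $B$, so $\sigma$ is already quite rigid; and since $B$ is a block, $M$ is, after restriction to the basic algebra $A_B$ of $B$, given by an algebra automorphism $\gamma$ of $A_B$ inducing $\sigma$ on $\Irr(B)=\Irr(A_B)$. Set $J:=((1-x)e_\varphi)_{x\in D_1}\lhd B$, so that $B/J\cong B^{D_1}$. By part~(1) of Lemma~\ref{lem:Op} (for $p$ odd) and part~(2)(b) (for $p=2$), a character $\chi\in\Irr(B)$ belongs to $\Irr(B,1_{D_1})$ if and only if it is afforded by some $\cO$-free $\cO G$-module $V$ on which every $x\in D_1$ acts as the identity on $\cO_p\otimes_\cO V$, respectively on $\cO_I\otimes_\cO V$; equivalently, $\chi$ is a character of $B/J$, i.e. $\chi\in\Irr(B^{D_1})$. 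Hence it is enough to show that $M$ preserves the ideal $J$, in the sense that $M\otimes_BJ\cong J\otimes_BM$ inside $M$, for then $M\otimes_B-$ carries $B/J$-modules to $B/J$-modules and $\sigma$ permutes $\Irr(B^{D_1})=\Irr(B,1_{D_1})$.

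That $M$ preserves $J$ is where Lemma~\ref{lem:coeff} enters. One first reduces to the case in which $B$ has a single simple module, treated in \S\ref{sec:one}; there the basic algebra is the algebra $A$ of Lemma~\ref{lem:basic}, and by part~(4) of that lemma the image of $J$ is $M_d(\cO)\otimes_\cO T$ with $T=(X_{ij})_{1\le i\le t,\,1\le j\le m_i}$. Reducing $\gamma$ modulo $p$ and invoking part~(2) of Lemma~\ref{lem:coeff} (when $p$ is odd) shows that $\gamma$ preserves $T_p$, hence that $M$ preserves $J$ modulo $p$, which in view of the $\cO_p$-formulation of Lemma~\ref{lem:Op}(1) is all that is needed. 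For $p=2$ one uses $\cO_I$ and part~(3) of Lemma~\ref{lem:coeff} instead; since the latter requires $D_1$ to be elementary abelian, in the general case at the prime $2$ one first descends --- by Proposition~\ref{prop:indMor} applied with the normal $p$-subgroup $\Phi(D_1)\lhd G$, arguing by induction on $|D_1|$ --- to a quotient block for which $D_1/\Phi(D_1)$ is elementary abelian.

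The main obstacle is the reduction to a single simple module together with the verification that the identification of $J$ with $M_d(\cO)\otimes_\cO T$ in Lemma~\ref{lem:basic}(4) is compatible with the reductions $A_p$ and $A_I$ underlying Lemma~\ref{lem:coeff}; once this bookkeeping is in place, the rest is a matter of unwinding definitions, with Lemma~\ref{lem:Op} providing the bridge between the lattice-level statement we want and the algebra-automorphism statement that Lemma~\ref{lem:coeff} delivers.
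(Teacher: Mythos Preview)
Your outline is essentially the paper's argument once you are in the one-simple-module case: there $B\cong M_d(\cO)\otimes_\cO A$ with $A$ local, so the Morita auto-equivalence is indeed induced by an $\cO$-algebra automorphism; reducing modulo $p$ (or modulo $I$ when $p=2$ and $D_1$ is elementary abelian), Lemma~\ref{lem:coeff} together with Lemma~\ref{lem:basic}(4) shows the image of $T$ is preserved, and Lemma~\ref{lem:Op} then translates this into the desired statement about $\Irr(B,1_{D_1})$.

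The genuine gap is the one you yourself flag: the reduction to a single simple module. Your opening observation that $M$ corresponds to an automorphism of the basic algebra $A_B$ of $B$ is correct but does not help, because Lemma~\ref{lem:coeff} is proved only for the specific local algebra $A$ of Lemma~\ref{lem:basic}, not for a general basic algebra with several idempotents; there is no description of $J$ inside $A_B$ to which Lemma~\ref{lem:coeff} applies. The paper's mechanism for this reduction is Lemma~\ref{lem:ZEWeiss}: the subset $\Irr(B,1_{[D,Z(E)]})$ is characterised intrinsically as the set of irreducible characters whose Brauer reduction is a multiple of a single irreducible Brauer character, and this property is visibly Morita-invariant. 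One may therefore apply Proposition~\ref{prop:indMor}(2) with $Q=[D,Z(E)]$; the resulting $B^{[D,Z(E)]}$ is a direct sum of blocks each with one simple module (pairwise linked by tensoring with linear characters), and your argument then runs on each summand for the induced auto-equivalence ${}^{[D,Z(E)]}M$. This use of Lemma~\ref{lem:ZEWeiss} to produce a Morita-invariant normal subgroup through which to descend is the idea your proposal is missing.

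A smaller point at $p=2$: to invoke Proposition~\ref{prop:indMor} with $Q=\Phi(D_1)$ you must already know that $\sigma$ permutes $\Irr(B,1_{\Phi(D_1)})$. This is not free from induction on $|D_1|$; rather, in the one-simple-module case it comes from running the $\cO_p$-argument (Lemma~\ref{lem:Op}(2)(a) with Lemma~\ref{lem:coeff}(2), which is valid for all $p$) first, and only then passing to the quotient where $D_1$ becomes elementary abelian and the $\cO_I$-argument applies.
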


\begin{proof}
We first assume $p>2$ and that $B$ has a unique simple module. By part (1) of Lemma~\ref{lem:Op}, we need only check that $\sigma_p$, the corresponding permutation of
\begin{align*}
\{\cO_p\otimes_\cO V|V\text{ an }\cO\text{-free }B\text{-module affording some }\chi\in\Irr(B)\}
\end{align*}
induced by $\cO_p\otimes_\cO M$, permutes the $\cO_p\otimes_\cO V$ with $D_1$ acting trivially. Since $B$ has a unique simple module, $M$ is induced by an $\cO$-algebra automorphism of $B$, see~\cite[Proposition 2.2]{eali2019}. By part (4) of Lemma~\ref{lem:basic} and part (2) of Lemma~\ref{lem:coeff}, any $\cO_p$-algebra automorphism of $\cO_p\otimes_{\cO}B$ leaves $\cO_p\otimes_{\cO}((1-x)e_\varphi)_{x\in D_1}$ invariant, as desired.
\newline
\newline
We now drop the assumption that $B$ has a unique simple module. By Lemma~\ref{lem:ZEWeiss} we may apply part (2) of Proposition~\ref{prop:indMor} with respect to $B$, $M$ and $[D,Z(E)]$. Note that every character of $B^{[D,Z(E)]}$ reduces to some number of copies of the same irreducible Brauer character. In other words, $B^{[D,Z(E)]}$ is the direct sum of blocks each with a unique simple module.
\newline
\newline
By part (1) of Lemma~\ref{lem:tenchar}, any two of the blocks appearing in the direct sum $B^{[D,Z(E)]}$ are Morita equivalent via tensoring with an irreducible character of $\Irr(G,1_{D\times Z})$. Therefore, by the first paragraph, any Morita auto-equivalence of $B^{[D,Z(E)]}$ permutes $\Irr(B^{[D,Z(E)]},1_{D_1/[D,Z(E)]})$. The fact that $\sigma$ permutes $\Irr(B,1_{[D,Z(E)]})$ now follows from the last sentence in part (2) of Proposition~\ref{prop:indMor}.
\newline
\newline
In the above paragraph we need to be a little careful when we apply the conclusion from the first paragraph. First note that a block $C$ appearing in the direct sum $B^{[D,Z(E)]}$ may not be of the form of a block as described in $\S$\ref{sec:Bchar}. In particular, the relevant character $\varphi_C$ of $Z(E)$ may not be faithful. However, $C$ is naturally Morita equivalent to a block of $G/([D,Z(E)]\ker(\varphi_C))$ that will be of the desired form. Secondly we are implicitly using the fact that $[D/[D,Z(E)],E]=D_1/[D,Z(E)]$. This is required to ensure that ${}^{[D,Z(E)]}M$ does indeed permute $\Irr(B^{[D,Z(E)]},1_{D_1/[D,Z(E)]})$.
\newline
\newline
A slightly more delicate argument is required for the $p=2$ case due to the weaker result obtained for $p=2$ in Lemma~\ref{lem:Op}. When $B$ has a unique simple module we first use part (2a) of Lemma~\ref{lem:Op}, part (4) of Lemma~\ref{lem:basic} and part (2) of Lemma~\ref{lem:coeff} to apply part (2) of Proposition~\ref{prop:indMor} with respect to $B$, $M$ and $\Phi(D_1)$. In other words, we may assume that $D_1$ is elementary abelian. We can now use part (2b) of Lemma~\ref{lem:Op}, part (4) of Lemma~\ref{lem:basic} and part (3) of Lemma~\ref{lem:coeff} to apply part (2) of Proposition~\ref{prop:indMor} with respect to $B$, $M$ and $D_1$. The general $p=2$ case now follows exactly as for $p>2$.
\newline
\newline
This time we are implicitly using the fact that $[D/\Phi(D_1),E]=D_1/\Phi(D_1)$. This is required when we reduce to the situation of $D_1$ being elementary abelian.
\end{proof}

In what follows, for any finite group $H$ and $\alpha\in\Aut(H)$, we denote by $\Delta\alpha$ the subgroup $\{(g,\alpha(g))|g\in H\}$ of $H\times H$. If $\alpha=\Id_H$, we simply write $\Delta H$. $\cO_H$ (respectively $K_H$) will signify the trivial $\cO H$-module (respectively trivial $KH$-module). We are now ready to state and prove our main theorem.

\begin{thm}\label{thm:main}
Let $b$ be a block with normal abelian defect group and abelian inertial quotient. Then $\Pic(b)=\cL(b)$.
\end{thm}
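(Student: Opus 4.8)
The plan is to reduce to the block $B=\cO(D\rtimes E)e_\varphi$ of $\S$\ref{sec:Bchar} and then strip off the two factors $D_1=[D,E]$ and $D_2=C_D(E)$ in turn, using Lemma~\ref{lem:D2kernel} for the $D_2$-part and Lemma~\ref{lem:D1kernel} together with Proposition~\ref{prop:indMor} for the $D_1$-part. First I would invoke Külshammer's description of blocks with normal defect group together with the usual de-twisting of the associated $2$-cocycle (in the form used in \cite{zh05,bkl18}): a block $b$ with normal abelian defect group $D$ and abelian inertial quotient $L$ is isomorphic, compatibly with its distinguished defect group, to $M_n(\cO)\otimes_\cO B$ where $B=\cO(D\rtimes E)e_\varphi$ is exactly as in $\S$\ref{sec:Bchar}, with $E$ a central extension of $L$ by a cyclic group $Z$ chosen so as to trivialise the cocycle and $\varphi$ a faithful character of $Z$. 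This identification matches $\Pic(b)$, $\cL(b)$, $\cT(b)$ with $\Pic(B)$, $\cL(B)$, $\cT(B)$, so it suffices to prove $\Pic(B)=\cL(B)$. Fix $M\in\Pic(B)$ with induced permutation $\sigma$ of $\Irr(B)$, and write $G=(D_1\rtimes E)\times D_2$ as in Lemma~\ref{lem:D2kernel}.

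By Lemma~\ref{lem:D2kernel} there is $\theta\in\Irr(D_2)$ with $\sigma(1_D,\chi)=\psi_\chi\otimes\theta$ for all $\chi\in\Irr(E,\varphi)$, where $\psi_\chi\in\Irr(D_1\rtimes E,\varphi)$. Since $D_2\le Z(G)$, $\theta$ extends to a linear character $\tilde\theta$ of $G$ trivial on $D_1\rtimes E$, and twisting the identity bimodule by $\tilde\theta$ yields an element of $\cL(B)$. Twisting $M$ by $\tilde\theta^{-1}$ on the appropriate side therefore produces $M'\in\Pic(B)$ which differs from $M$ by an element of $\cL(B)$ and whose permutation $\sigma'$ of $\Irr(B)$ satisfies $\sigma'(1_D,\chi)=\psi_\chi\otimes 1_{D_2}$; since $\cL(B)$ is a subgroup of $\Pic(B)$ it is enough to show $M'\in\cT(B)$. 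By Lemma~\ref{lem:D1kernel}, $\sigma'$ permutes $\Irr(B,1_{D_1})$, and as $\psi_\chi\otimes 1_{D_2}$ is also trivial on $D_2$ we get $\sigma'(1_D,\chi)\in\Irr(B,1_{D_1})\cap\Irr(B,1_{D_2})=\Irr(B,1_D)$; in particular $\psi_\chi$ is trivial on $D_1$ and the character ``$\theta$'' attached to $M'$ by Lemma~\ref{lem:D2kernel} is trivial.

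Now apply Proposition~\ref{prop:indMor}(2) with $Q=D_1$: since $\sigma'$ permutes $\Irr(B,1_{D_1})$, the bimodule ${}^{D_1}M'$ induces a Morita auto-equivalence of $B^{D_1}$ whose permutation of $\Irr(B^{D_1})$ agrees with $\sigma'|_{\Irr(B,1_{D_1})}$ under inflation. But $B^{D_1}$ is a sum of blocks of $\cO(D_2\times E)=\cO D_2\otimes_\cO\cO E$, indexed by $\psi\in\Irr(Z(E),\varphi)$, each isomorphic to $M_{d_\psi}(\cO D_2)$; these are nilpotent blocks with central defect group $D_2$, each Morita equivalent to $\cO D_2$. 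Hence any Morita auto-equivalence of $B^{D_1}$ permutes the summands and acts on each summand by an element of $\Pic(\cO D_2)$; as $\cO D_2$ is a commutative local ring, $\Pic(\cO D_2)=\Aut_\cO(\cO D_2)=\Aut(D_2)\ltimes\Hom(D_2,\cO^\times)$ (cf.\ \cite{bkl18}), where the $\Aut(D_2)$-part gives trivial-source (permutation) bimodules while the ``scalar twists'' in $\Hom(D_2,\cO^\times)$ have linear source, with trivial source only for the trivial twisting character. On the $\psi$-summand ${}^{D_1}M'$ thus acts by some $\phi_\psi\in\Aut(D_2)$ composed with a scalar twist $\eta_\psi\in\Hom(D_2,\cO^\times)$; inspecting the induced permutation of $\Irr(D_2)$ at the trivial character, the relation $\sigma'(1_D,\chi)=\psi_\chi\otimes 1_{D_2}$ forces $\eta_\psi=1_{D_2}$ for every $\psi$. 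Therefore ${}^{D_1}M'$ has trivial source, and Proposition~\ref{prop:indMor}(3) gives $M'\in\cT(B)$, whence $M\in\cL(B)$ as required.

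The main obstacle I anticipate is the final matching in the third paragraph: one must be careful that the abstract ``scalar-twist'' coordinates $\eta_\psi$ of ${}^{D_1}M'$ are exactly the data already recorded by the single character $\theta$ of Lemma~\ref{lem:D2kernel}, so that twisting out $\tilde\theta$ really does leave a trivial-source bimodule. Making the reduction in the first paragraph precise — in particular checking that the Morita equivalence furnished by Külshammer's theorem preserves the source-type subgroups $\cT$ and $\cL$ — will also need care, and one should keep track that $\cO$ contains $p$-power roots of unity, since this is precisely what makes $\Pic(\cO D_2)$ strictly larger than $\Aut(D_2)$ and is the reason the conclusion is $\cL$ rather than $\cT$.
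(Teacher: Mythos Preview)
Your approach is essentially the paper's: reduce to $B$, twist away a linear character of $D_2$ using Lemma~\ref{lem:D2kernel}, combine with Lemma~\ref{lem:D1kernel} to see that the resulting permutation sends $\Irr(B,1_D)$ into itself, and then invoke Proposition~\ref{prop:indMor}. The one substantive difference is the very last step. You apply Proposition~\ref{prop:indMor} with $Q=D_1$, which leaves you with $B^{D_1}$ still having nontrivial defect group $D_2$, and then you analyse $\Pic(\cO D_2)$ summand by summand. The paper instead observes that, since you have already shown $\sigma'(1_D,\chi)\in\Irr(B,1_D)$ for every $\chi\in\Irr(E,\varphi)$, one may apply Proposition~\ref{prop:indMor} directly with $Q=D$: then $B^D$ is a sum of blocks of the $p'$-group algebra $\cO E$, so ${}^DM'\in\cT(B^D)$ holds automatically and $M'\in\cT(B)$ follows at once from part~(3). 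This sidesteps precisely the obstacle you flag in your final paragraph, and it also avoids any appeal to the identity $\Pic(\cO D_2)=\Aut(D_2)\ltimes\Hom(D_2,\cO^\times)$, which is the case $E=\{1\}$ of the very theorem being proved and is not established in~\cite{bkl18} in the form you need. Your version can be rescued without circularity by a second application of Proposition~\ref{prop:indMor}, now with $Q=D_2\lhd G/D_1$, but that is just doing $Q=D$ in two stages.

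For the reduction in your first paragraph, the paper makes this precise by citing K\"ulshammer~\cite{ku85} together with~\cite[Theorem~6.14.1]{li18b} to obtain a \emph{source algebra} equivalence between $b$ and $B$, and then~\cite[Lemma~2.8(ii)]{bkl18} for the fact that a source algebra equivalence induces an isomorphism $\Pic(b)\cong\Pic(B)$ restricting to $\cL(b)\cong\cL(B)$.
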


\begin{proof}
We first note that $b$ is source algebra equivalent to a block of the form of $B$ as introduced in $\S$\ref{sec:Bchar}, where the defect group of $b$ is isomorphic to $D$ and its inertial quotient is isomorphic to $L$. The fact that we have a Morita equivalence follows from~\cite[Theorem A]{ku85} and that this Morita equivalence is in fact a source algebra equivalence from~\cite[Theorem 6.14.1]{li18b}. Note that in both of these articles actually an equivalence with a twisted group algebra $\cO_\alpha(D\rtimes L)$ is constructed. However, $\cO_\alpha(D\rtimes L)$ and $B$ are isomorphic as interior $D$-algebras, for an appropriately chosen $B$. (See the comments following~\cite[Theorem 4.2]{hk05} for a discussion of~\cite[Theorem A]{ku85}.) Note that $E$ must be a $p'$-group as otherwise $G$ has a normal $p$-subgroup strictly containing its defect group.
\newline
\newline
By~\cite[Lemma 2.8(ii)]{bkl18} the source algebra equivalence between $b$ and $B$ induces an isomorphism $\Pic(b)\cong\Pic(B)$ that restricts to an isomorphism $\cL(b)\cong\cL(B)$. Therefore, from now on we assume that $b=B$.
\newline
\newline
Let $M\in\Pic(B)$ and $\sigma$ the corresponding permutation of $\Irr(B)$. Let $\theta\in\Irr(D_2)$ be as defined in Lemma~\ref{lem:D2kernel} and $M_{\theta^{-1}}\in\Pic(B)$ the $B$-$B$-bimodule inducing the Morita auto-equivalence given by tensoring with $\theta^{-1}$. In other words,
\begin{align*}
M_{\theta^{-1}}=\cO(D_1\rtimes E)e_\varphi\otimes_\cO(\cO_{\theta^{-1}})\uparrow^{D_2\times D_2},
\end{align*}
where $\cO_{\theta^{-1}}$ is the $\cO(\Delta D_2)$-module affording the character $\theta$. In particular, $M_{\theta^{-1}}$ has linear source and, since $\cL(B)$ is a subgroup of $\Pic(B)$, we may replace $M$ with $M_{\theta^{-1}}\otimes_BM$. In other words, we may assume that $\sigma$ satisfies
\begin{align*}
\sigma(1_D,\chi)=\psi_\chi\otimes 1_{D_2},
\end{align*}
for all $\chi\in\Irr(E,\varphi)$, where $\psi_\chi\in\Irr(D_1\rtimes E,\varphi)$. However, by Proposition~\ref{lem:D1kernel}, we also have that $\psi_\chi\in\Irr(D_1\rtimes E,1_{D_1})$. Therefore, we can apply parts (2) and (3) of Proposition~\ref{prop:indMor} with respect to $B$, $M$ and $D$. Since $G/D$ is a $p'$-group, certainly ${}^DM\in\cT(B^D)$ and so $M\in\cT(B)\leq \cL(B)$ as required.
\end{proof}

In fact we can say more about $\Pic(B)$.

\begin{cor}\label{cor:main}
\begin{align*}
\Pic(B)&\cong\cT(\cO(D_1\rtimes E)e_\varphi)\times\cL(\cO D_2)\\
&=\cT(\cO(D_1\rtimes E)e_\varphi)\times(\Hom(D_2,\cO^\times)\rtimes\Aut(D_2)).
\end{align*}
\end{cor}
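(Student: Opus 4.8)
The plan is to decompose the Picard group according to the tensor decomposition $B \cong \cO(D_1\rtimes E)e_\varphi \otimes_\cO \cO D_2$, which follows from the identification of $G = (D_1\rtimes E)\times D_2$ used throughout $\S$\ref{sec:perfisom}. Write $B_1 := \cO(D_1\rtimes E)e_\varphi$ and note that $\cO D_2$ is a block of the abelian $p$-group $D_2$, so $\cL(\cO D_2) = \Pic(\cO D_2) = \Hom(D_2,\cO^\times)\rtimes\Aut(D_2)$ by the well-known description of Picard groups of blocks with trivial inertial quotient (or simply of group algebras of abelian $p$-groups over $\cO$). There is an evident injective homomorphism $\cT(B_1)\times\cL(\cO D_2)\to\Pic(B)$ sending a pair of bimodules to their outer tensor product (using that outer tensor products of Morita auto-equivalences are again Morita auto-equivalences, and that trivial-source outer linear-source gives linear source — indeed trivial). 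So the whole content is surjectivity.

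First I would take an arbitrary $M\in\Pic(B)$ with associated permutation $\sigma$ of $\Irr(B)$. Applying Lemma~\ref{lem:D2kernel}, there is $\theta\in\Irr(D_2)$ with $\sigma(1_D,\chi) = \psi_\chi\otimes\theta$ for all $\chi\in\Irr(E,\varphi)$, where $\psi_\chi\in\Irr(D_1\rtimes E,\varphi)$. As in the proof of Theorem~\ref{thm:main}, the bimodule $M_{\theta^{-1}}\in\cL(B)$ given by tensoring with $\theta^{-1}$ on the $D_2$-factor is $1_{B_1}\otimes(\text{rank-one }\cO D_2\text{-module})$, an element of $1_{\cT(B_1)}\times\cL(\cO D_2)$; replacing $M$ by $M_{\theta^{-1}}\otimes_B M$ we may assume $\sigma(1_D,\chi) = \psi_\chi\otimes 1_{D_2}$. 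By Lemma~\ref{lem:D1kernel} (cited as Proposition~\ref{lem:D1kernel} in the main proof) we then have $\psi_\chi\in\Irr(D_1\rtimes E, 1_{D_1})$, so $\sigma$ fixes $\Irr(B,1_D)$ setwise. Hence parts (2) and (3) of Proposition~\ref{prop:indMor} apply with $Q = D$: since $G/D\cong E$ is a $p'$-group, $B^D\cong\cO E e_\varphi$ is semisimple and $\cT(B^D) = \Pic(B^D)$, so ${}^D M\in\cT(B^D)$ and therefore $M\in\cT(B)$.

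It remains to identify $\cT(B)$ with $\cT(B_1)$, i.e. to show every trivial-source Morita auto-equivalence of $B$ is an outer tensor product of one of $B_1$ with the identity on $\cO D_2$. Here I would use that a trivial-source $\cO(G\times G)$-bimodule inducing a Morita auto-equivalence of $B$ is, up to isomorphism, of the form $\cO[(G\times G)/\Delta\alpha]e$ for a suitable automorphism-type diagonal subgroup and block idempotent — concretely, by the theory of trivial-source (``splendid'') Morita auto-equivalences of blocks with normal defect group, such an $M$ is induced by an automorphism of $G$ stabilising $B$ and acting trivially on $D$ in an appropriate sense; combined with the reductions above (the permutation fixes $\Irr(B,1_D)$ and acts trivially on the $D_2$-direction) one forces the underlying automorphism to preserve the direct factorisation $G = (D_1\rtimes E)\times D_2$ and to restrict to the identity on the $D_2$-factor. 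The outer tensor decomposition of the bimodule then follows, and the induced map $\cT(B)\to\cT(B_1)$ is seen to be an isomorphism, completing the proof together with the first paragraph.

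The main obstacle I anticipate is the last step: rigorously controlling the structure of the trivial-source bimodule $M$ well enough to split off the $\cO D_2$-tensor factor. The reductions via Lemmas~\ref{lem:D2kernel} and~\ref{lem:D1kernel} pin down the behaviour on characters, but passing from ``$\sigma$ restricted to the $D_2$-part is trivial'' to ``$M$ itself is $M_1\otimes\cO D_2$ as a bimodule'' requires knowing that the Morita auto-equivalence is rigid enough — one uses that $\cO D_2$ is a local symmetric $\cO$-algebra whose Picard group is exactly $\Hom(D_2,\cO^\times)\rtimes\Aut(D_2)$, so that the only way a trivial-source auto-equivalence can interact with the $D_2$-factor nontrivially has already been absorbed into $\cL(\cO D_2)$. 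Everything else is bookkeeping with the tensor decomposition and the cited results.
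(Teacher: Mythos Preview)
Your overall strategy matches the paper's: reduce $\Pic(B)$ to $\cT(B)\cdot\cL(\cO D_2)$ via the proof of Theorem~\ref{thm:main}, then decompose $\cT(B)$. The first part is fine and is exactly what the paper does. The genuine gap is in your final step, and you correctly flag it as the main obstacle --- but your proposed resolution is not an argument.

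Two specific problems. First, the target statement is wrong: after absorbing $M_{\theta^{-1}}$ you should \emph{not} expect $M\cong M_1\otimes_\cO\cO D_2$. An element $M_2\in\cT(\cO D_2)=\Aut(D_2)$ fixes $1_{D_2}$, so the character information you have extracted (only about $\sigma(1_D,\chi)$) cannot distinguish $M_1\otimes_\cO\cO D_2$ from $M_1\otimes_\cO M_2$. The correct claim is $\cT(B)\cong\cT(B_1)\times\cT(\cO D_2)$; since $\cT(\cO D_2)\leq\cL(\cO D_2)$ this still yields $\Pic(B)=\cT(B_1)\times\cL(\cO D_2)$. Second, ``such an $M$ is induced by an automorphism of $G$ stabilising $B$'' is not what trivial source gives you, and your last paragraph is circular: invoking $\Pic(\cO D_2)=\cL(\cO D_2)$ presupposes the very tensor splitting you need to establish.

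The paper's fix is concrete and short. For $M\in\cT(B)$, \cite[Theorem~1.1(ii), Remark~1.2(f)]{bkl18} gives that $M$ has vertex $\Delta\alpha$ for some $\alpha\in N_{\Aut(D)}(L)$; any such $\alpha$ preserves $D=D_1\times D_2$. Since $G\times G=\bigl((D_1\rtimes E)\times(D_1\rtimes E)\bigr)\times(D_2\times D_2)$ and $\Delta\alpha=\Delta(\alpha|_{D_1})\times\Delta(\alpha|_{D_2})$, every indecomposable summand of $\cO_{\Delta\alpha}\uparrow^{G\times G}$ is an outer tensor product $M_1\otimes_\cO M_2$ with $M_i$ a summand of the corresponding induced module on each factor. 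One then checks (as in \cite[Lemma~2.3]{eali19}, using \cite[Th\'eor\`eme~1.2]{br90}) that $M_1\otimes_\cO M_2$ induces a Morita auto-equivalence of $B$ if and only if each $M_i$ does on its factor. This gives $\cT(B)\cong\cT(B_1)\times\cT(\cO D_2)$ directly, with no appeal to the character reductions at this stage.
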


\begin{proof}
By the proof of Theorem~\ref{thm:main}, $\Pic(B)$ is generated by $\cT(B)$ and $\cL(\cO D_2)$. Now it is well-known that $\cL(P)=(\Hom(P,\cO^\times)\rtimes\Aut(P))$ for any finite $p$-group $P$. It therefore suffices to show that $\cT(B)\cong \cT(\cO(D_1\rtimes E)e_\varphi)\times\cT(\cO D_2)$.
\newline
\newline
Let $M\in\cT(B)$. It follows from~\cite[Theorem 1.1(ii), Remark 1.2(f)]{bkl18} that $M$ has vertex of the form $\Delta\alpha$ for some $\alpha\in N_{\Aut(D)}(L)$. In particular, $\alpha$ must respect the direct product $D=D_1\times D_2$. We claim that every $B$-$B$-bimodule summand of $\cO_{\Delta\alpha}\uparrow^{G\times G}$ that induces a Morita auto-equivalence of $B$ is of the form $M_1\otimes_\cO M_2$, for some $M_1\in\cT(\cO(D_1\rtimes E)e_\varphi)$ and $M_2\in\cT(\cO D_2)$. This is proved in~\cite[Lemma 2.3]{eali19} but with the added assumption that $\alpha$ is the identity. The argument in this more general setting is almost identical but we outline the main points for the convenience of the reader.
\newline
\newline
Every indecomposable direct summand of $\cO_{\Delta\alpha}\uparrow^{G\times G}$ is of the form $M_1\otimes_\cO M_2$, for $M_1$ an indecomposable summand of $\cO_{\Delta\alpha|_{D_1}}\uparrow^{(D_1\rtimes E)\times(D_1\rtimes E)}$ and $M_2$ an indecomposable direct summand of $\cO_{\Delta\alpha|_{D_2}}\uparrow^{D_2\times D_2}$. Now $K\otimes_\cO(M_1\otimes_\cO M_2)$ induces a bijection of simple $KB$-modules if and only if $K\otimes_\cO M_1$ (respectively $K\otimes_\cO M_2$) induces a bijection of simple $K(D_1\rtimes E)e_\varphi$-modules (respectively simple $KD_2$-modules). The claim now follows from~\cite[Th\'eor\`eme 1.2]{br90}.
\end{proof}

\begin{ack*}
The author would like to express gratitude to Prof. Burkhard K\"ulshammer for hosting the author at Friedrich-Schiller-Universit\"at Jena, where much of this article was written.
\end{ack*}

\end{document}